\newtheorem{theorem}{Theorem}
\newtheorem{lemm}{Lemma}
\newtheorem{defi}{Definition}
\newtheorem{coro}{Corollary}
\newtheorem{ex}{Example}
\newtheorem{rem}{Remark}
\begin{document}

\title{$n$-absorbing monomial ideals in polynomial rings}
\newcommand{\Ass}{\mbox{\textup{Ass}}} 
\newcommand{\NDC}{\mbox{\textup{NDC}}}
\newcommand{\Spec}{\mbox{\textup{Spec}}}
\newcommand{\Min}{\mbox{\textup{Min}}} 
\newcommand{\height}{\mbox{\textup{ht}}}
\newcommand{\supp}{\mbox{\textup{supp}}}
\newcommand{\reg}{\mbox{\textup{reg}}}
\newcommand{\adeg}{\mbox{\textup{adeg}}}

\author{Hyun Seung choi}
  \address{Department of Mathematics, Glendale Community College, Glendale, California 91208, U.S.A.}
\curraddr{}
\email{hyunc@glendale.edu}
\thanks{}

\author{Andrew Walker}
  \address{Department of Mathematics, College of Charleston, Charleston, South Carolina 29424, U.S.A.}
\curraddr{}
\email{walkeraj@cofc.edu}
\thanks{}


\keywords{$n$-absorbing ideals, monomial ideals, Noether exponent}

\date{December 13, 2018}

\dedicatory{}

\begin{abstract}
In a commutative ring $R$ with unity, given an ideal $I$ of $R$, Anderson and Badawi in 2011 introduced the invariant $\omega(I)$, which is the minimal integer $n$ for which $I$ is an $n$-absorbing ideal of $R$. In the specific case that $R = k[x_{1}, \ldots, x_{n}]$ is a polynomial ring over a field $k$ in $n$   variables $x_{1},\ldots, x_{n}$, we calculate $\omega(I)$ for certain monomial ideals $I$ of $R$.
\end{abstract}

\maketitle
\section{Introduction}

Throughout this paper, we set $\mathbb{N} := \{1,2,\ldots, \}$ and $\mathbb{N}_{0} := \{0,1,2, \ldots \}$ and $R$ will denote a commutative ring with unity. Given a nonzero ideal $I$ of $R$, Ass$(R/I)$ will denote the set of associated primes of $I$ in $R$. The primary notion we are interested in this paper is the following:

\begin{defi}
Let $n \in \mathbb{N}$, $R$ a commutative ring with unity, and $I$ an ideal of $R$. $I$ is said to be an \textbf{n-absorbing ideal} of a ring $R$ if for any $x_{1},\ldots, x_{n+1} \in R$ such that $x_{1} \cdots x_{n+1} \in I$, there is some $j$ with $1 \leq j \leq n+1$ such that $x_{1} \cdots x_{j-1}x_{j+1} \cdots x_{n+1} \in I$. $I$ is said to be a \textbf{strongly n-absorbing ideal} of a ring $R$ if for any  ideals $I_{1},\ldots, I_{n+1}$ of  $R$ such that $I_{1} \cdots I_{n+1} \subseteq I$, there is some $j$ with $1 \leq j \leq n+1$ such that $I_{1} \cdots I_{j-1}I_{j+1} \cdots I_{n+1} \subseteq I$.
\end{defi} 

(Strongly) $2$-absorbing ideals were initially defined and investigated by Badawi in \cite{Badawi} as a generalization of prime ideals, which are precisely the proper $1$-absorbing ideals. In 2011, Anderson and Badawi together generalized this further to the notion of a (strongly) $n$-absorbing ideal for any $n \in \mathbb{N}$ defined above in \cite{Anderson}. For an ideal $I$ in a ring $R$, we let $\omega(I)$ denote the minimal integer $n \in \mathbb{N}$ such that $I$ is $n$-absorbing. In a general ring, $I$ may not be $n$-absorbing for any $n \in \mathbb{N}$, in which case we set $\omega(I) = \infty$. Similarly, we can define the invariant $\omega^{\bullet}(I)$ to be the smallest integer $n \in \mathbb{N}$ for which an ideal $I$ is strongly $n$-absorbing, and set $\omega^{\bullet}(I) = \infty$ if no such integer exists. We set $\omega(R) = \omega^{\bullet}(R) = 0$. It is easy to see that 
$\omega(I)\le \omega^{\bullet}(I)$ holds for each ideal $I$ of $R$. In fact, Anderson and Badawi in Conjecture $1$ of \cite[page 1669]{Anderson} postulate that $\omega(I) = \omega^{\bullet}(I)$ holds for any ideal $I$ in arbitrary ring $R$; that is, they conjecture that the notion of an $n$-absorbing ideal and strongly $n$-absorbing ideal coincide. As of this writing, this problem remains open. However, it is known that the conjecture holds true for any $n\in\mathbb{N}$ if $R$ is a Pr{\"u}fer domain (\cite[Corollary 6.9]{Anderson}) or a commutative algebra over an infinite field(\cite{Donadze}), and for any ring $R$ if $n=2$ (\cite[Theorem 2.13]{Badawi}). The interested reader may refer to the survey article \cite[Section 5]{Badawi2} for  further information on strongly $n$-absorbing ideals. Recall that for an ideal $I$ in a ring $R$, the \textit{Noether exponent} of $I$, denoted by $e(I)$,  is the minimal integer $\mu \in \mathbb{N}$ such that $(\sqrt{I})^{\mu} \subseteq I$. If such an integer does not exist, we set $e(I) = \infty$. We also set $ e(R) = 0$. In a Noetherian ring, since $\sqrt{I}$ is finitely generated for any ideal $I$, $e(I) < \infty$. Anderson and Badawi in \cite{Anderson} establish a connection between $\omega^{\bullet}(I)$ and Noether exponents:

\begin{theorem}[Remark 2.2, Theorem 5.3, Section 6, Paragraph 2 on page 1669, \cite{Anderson}] 
\label{Wtheo}
Let $I_{1},\cdot\cdot\cdot, I_{r}$ be ideals of a ring $R$. Then $\omega(I_{1}\cap\cdot\cdot\cdot\cap I_{r})\le\omega(I_{1})+\cdot\cdot\cdot +\omega(I_{r})$ and  $\omega^{\bullet}(I_{1}\cap\cdot\cdot\cdot\cap I_{r})\le\omega^{\bullet}(I_{1})+\cdot\cdot\cdot +\omega^{\bullet}(I_{r})$. In particular, let $I$ be an ideal in a Noetherian ring $R$. If $I = Q_{1} \cap \cdots \cap Q_{n}$, where the $Q_{i}$ are primary ideals, then $\omega(I)\le\omega^{\bullet}(I) \leq \sum\limits\limits_{i=1}^{n} e(Q_{i})$. Thus every ideal in a Noetherian ring is $n$-absorbing for some $n \in \mathbb{N}$.
\end{theorem}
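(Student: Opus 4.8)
The plan is to reduce the two displayed inequalities to the case $r = 2$ by the trivial induction coming from $I_{1}\cap\cdots\cap I_{r} = (I_{1}\cap\cdots\cap I_{r-1})\cap I_{r}$, and then to deduce the ``in particular'' statement from the two-factor case together with the single estimate $\omega^{\bullet}(Q)\le e(Q)$ for a primary ideal $Q$. The substance is therefore the two-factor inequality and this primary estimate; the edge cases where some ideal equals $R$ (so its $\omega$ or $\omega^{\bullet}$ is $0$) or is $\infty$-absorbing are immediate.

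For the two-factor inequality I would first record the folklore strengthening of the defining property: if $J$ is $n$-absorbing and $y_{1}\cdots y_{k}\in J$ with $k\ge n+1$, then already some product of $n$ of the $y_{i}$ lies in $J$; and likewise, if $J$ is strongly $n$-absorbing and ideals $J_{1},\dots,J_{k}$ satisfy $J_{1}\cdots J_{k}\subseteq J$ with $k\ge n+1$, then some product of $n$ of them is contained in $J$. Both follow by induction on $k$: the case $k=n+1$ is the definition, and for $k>n+1$ one regards $y_{1}\cdots y_{k}$ as a product of the $n+1$ factors $y_{1},\dots,y_{n},\,y_{n+1}y_{n+2}\cdots y_{k}$, applies the $n$-absorbing property, and notes that any resulting $n$-fold sub-product other than $y_{1}\cdots y_{n}$ is a product of only $k-1$ of the original elements, to which the inductive hypothesis applies.

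Now set $m=\omega(I_{1})$ and $n=\omega(I_{2})$, assuming both finite, and take $x_{1}\cdots x_{m+n+1}\in I_{1}\cap I_{2}$. Applying the strengthening to $I_{1}$ produces an index set $S$ with $|S|=m$ and $z:=\prod_{i\in S}x_{i}\in I_{1}$; the complement of $S$ has $n+1$ indices, so $z\cdot\prod_{i\notin S}x_{i}$ is a product of $n+2$ factors lying in $I_{2}$, and applying the strengthening to $I_{2}$ yields a sub-product of $n$ of these factors lying in $I_{2}$. If that sub-product uses the factor $z$ it has the form $\prod_{i\in S\cup T}x_{i}$ with $|S\cup T|=m+n-1$; if it does not, it has the form $\prod_{i\in T}x_{i}$ with $T$ disjoint from $S$, and then $\prod_{i\in S\cup T}x_{i}$, of size $m+n$, lies in $I_{2}$ as well. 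In either case one obtains a product of at most $m+n$ of the $x_{i}$ lying in $I_{1}$ (it contains the factor $z\in I_{1}$) and in $I_{2}$; enlarging the index set to exactly $m+n$ indices and using that $I_{1}\cap I_{2}$ is an ideal gives $\omega(I_{1}\cap I_{2})\le m+n$. Replacing ``element'' by ``ideal'' and $x_{1}\cdots x_{m+n+1}\in I_{1}\cap I_{2}$ by $J_{1}\cdots J_{m+n+1}\subseteq I_{1}\cap I_{2}$ throughout gives the inequality for $\omega^{\bullet}$.

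For the last statement, a Noetherian ring admits primary decompositions, so write $I=Q_{1}\cap\cdots\cap Q_{n}$ with each $Q_{i}$ primary; since $\sqrt{Q_{i}}$ is finitely generated, $e(Q_{i})$ is finite. By the intersection bound just proved it remains to show $\omega^{\bullet}(Q)\le e(Q)$ for primary $Q$. Put $\mu=e(Q)$ and $P=\sqrt{Q}$, and suppose ideals $J_{1},\dots,J_{\mu+1}$ satisfy $J_{1}\cdots J_{\mu+1}\subseteq Q$ while no product of $\mu$ of them lies in $Q$. For each $k$ we have $\big(\prod_{i\neq k}J_{i}\big)J_{k}\subseteq Q$ with $\prod_{i\neq k}J_{i}\not\subseteq Q$, so primariness of $Q$ forces $J_{k}\subseteq P$; but then $J_{1}\cdots J_{\mu}\subseteq P^{\mu}\subseteq Q$, contradicting the assumption. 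Hence $Q$ is strongly $\mu$-absorbing, and $\omega(I)\le\omega^{\bullet}(I)\le\sum_{i=1}^{n}\omega^{\bullet}(Q_{i})\le\sum_{i=1}^{n}e(Q_{i})<\infty$, so $I$ is $n$-absorbing with $n=\sum_{i}e(Q_{i})$. The only place I expect genuine bookkeeping is the two-factor argument — tracking exactly which original factors occur and padding up to $m+n$ of them, and confirming that both this and the strengthening lemma transfer verbatim to ideals; the primary estimate is short once one recalls that $AB\subseteq Q$ with $A\not\subseteq Q$ and $Q$ primary forces $B\subseteq\sqrt{Q}$.
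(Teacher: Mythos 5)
Your proof is correct, and since the paper itself gives no argument for this theorem (it is quoted from Anderson--Badawi \cite{Anderson}), what you have written is a self-contained reconstruction of the cited results rather than a divergence from the paper. Your ``strengthening'' lemma (a product of $k\ge n+1$ factors in an $n$-absorbing ideal already has an $n$-fold sub-product in the ideal) is exactly the device behind Remark 2.2/Theorem 5.3 of \cite{Anderson}, and your two-factor bookkeeping is sound: in the case where the $n$-fold sub-product for $I_{2}$ omits $z$, padding $S\cup T$ up to $m+n$ indices is legitimate because the partial product already contains the factor $z\in I_{1}$ and a sub-product in $I_{2}$, and ideals absorb multiplication; the same argument transfers verbatim to ideals for $\omega^{\bullet}$. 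Your primary estimate $\omega^{\bullet}(Q)\le e(Q)$ via ``$AB\subseteq Q$, $A\not\subseteq Q$ forces $B\subseteq\sqrt{Q}$'' is the standard proof (Theorem 6.3 of \cite{Anderson}, which the paper's Lemma \ref{fund} also invokes), and the finiteness of $e(Q_{i})$ in a Noetherian ring plus the existence of primary decompositions gives the final claim. The only ingredient you use without proof is $\omega(I)\le\omega^{\bullet}(I)$, which the paper already records as immediate (apply the strongly $n$-absorbing property to principal ideals), so no gap results.
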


On the other hand, $e(I)$ is actually a lower bound of $\omega(I)$.

\begin{lemm}
\label{fund}
Given an ideal $I$ of a ring $R$, $e(I) \le \omega(I)$. If $Q$ is a primary ideal of $R$, then $\omega(Q) = \omega^{\bullet}(Q) = e(Q)$.
\end{lemm}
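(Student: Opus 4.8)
The statement has two parts. For the inequality $e(I) \le \omega(I)$, the plan is to suppose $I$ is $n$-absorbing and show $(\sqrt{I})^n \subseteq I$. Take any $a_1, \ldots, a_n \in \sqrt I$; I want to show $a_1 \cdots a_n \in I$. Since each $a_i \in \sqrt I$, there is some exponent $k_i$ with $a_i^{k_i} \in I$. Set $N = \max_i k_i$, so that $a_i^N \in I$ for every $i$. Now consider the product $(a_1 \cdots a_n)^N = a_1^N \cdots a_n^N$, which lies in $I$; I would like to peel off factors using the $n$-absorbing property until only $a_1 \cdots a_n$ survives. The cleanest way is an induction argument: writing the product $a_1 \cdots a_n$ repeated $N$ times as a product of $nN$ elements, the $n$-absorbing hypothesis (applied repeatedly, or more carefully via a counting/pigeonhole argument on which factors get deleted) should let us reduce to a subproduct of length $n$ that still lies in $I$ and contains each $a_i$ at least once — but some care is needed here, since naive deletion might remove all copies of one particular $a_i$.

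To handle this subtlety I would instead argue as follows. It suffices to treat the case $a_1 = \cdots = a_n = a$ with $a^N \in I$ and conclude $a^n \in I$, since the general case reduces to it by a standard trick: if $a = a_1 + \cdots + a_n$ with each $a_i \in \sqrt I$ then... no — better to argue directly. Actually the honest route: if $a_1 \cdots a_n \notin I$, I want a contradiction. Consider $x_1 = a_1^N$ and $x_2 = a_2, x_3 = a_3, \ldots$ — this does not immediately have $n+1$ factors. The genuinely clean statement to prove is: if $a_1^{m_1} \cdots a_r^{m_r} \in I$ with $\sum m_i \ge n+1$, then deleting one factor keeps us in $I$, and iterating down to total degree $n$ shows $a_1^{e_1} \cdots a_r^{e_r} \in I$ for some $e_i \le m_i$ with $\sum e_i = n$. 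Starting from $a_i^N \in I$ for each $i$ (so $\prod a_i^{N}$, total degree $nN \ge n+1$, is in $I$), I reduce to total degree $n$. The resulting monomial $\prod a_i^{e_i}$ with $\sum e_i = n$ might not be $a_1 \cdots a_n$, but by the $n$-absorbing property applied once more I can compare; the real fix is that I should start the reduction from $\prod_{i} a_i \cdot \prod_i a_i^{N-1}$ grouped so the first $n$ factors are exactly $a_1, \ldots, a_n$ — then at each deletion step I delete from the second block, which is possible as long as the second block is nonempty, i.e. until total degree drops to $n$, at which point the surviving product is precisely $a_1 \cdots a_n \in I$. This works verbatim once $N \ge 2$, and $N = 1$ is trivial. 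This gives $e(I) \le \omega(I)$.

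For the second part, let $Q$ be primary with $P = \sqrt Q$, and let $e = e(Q)$, so $P^e \subseteq Q$ but $P^{e-1} \not\subseteq Q$. I already have $e \le \omega(Q) \le \omega^\bullet(Q)$ from the first part and the trivial inequality $\omega \le \omega^\bullet$. It remains to show $\omega^\bullet(Q) \le e$, i.e. $Q$ is strongly $e$-absorbing: if $I_1 \cdots I_{e+1} \subseteq Q$, then some $e$-fold subproduct lies in $Q$. Here I use primariness: if $I_j \subseteq P$ for at least $e$ of the indices $j$, then the product of those $e$ ideals is contained in $P^e \subseteq Q$ and we are done. Otherwise, at most $e-1$ of the $I_j$ are contained in $P$, so at least two of them, say $I_1$ and $I_2$, are not contained in $P = \sqrt Q$; pick $b_1 \in I_1 \setminus P$ and $b_2 \in I_2 \setminus P$. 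Then $b_1 b_2 (I_3 \cdots I_{e+1}) \subseteq Q$, and since $Q$ is primary and $b_1 b_2 \notin \sqrt Q = P$ (as $P$ is prime and neither $b_i$ is in it), we get $I_3 \cdots I_{e+1} \subseteq Q$ — a product of only $e - 1 < e$ of the original ideals, hence certainly an $e$-fold subproduct (pad with any one of the others) lies in $Q$. Thus $\omega^\bullet(Q) \le e$, and combining, $\omega(Q) = \omega^\bullet(Q) = e(Q)$.

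**Main obstacle.** The delicate point is the first part: ensuring that the repeated application of the $n$-absorbing property when reducing $\prod a_i^N \in I$ actually terminates at the specific monomial $a_1 a_2 \cdots a_n$ rather than some other degree-$n$ monomial in the $a_i$. Organizing the factors as a privileged block $a_1, \ldots, a_n$ followed by a "disposable" block of the remaining $a_i^{N-1}$ factors, and always deleting from the disposable block, is the trick that makes the induction go through cleanly; verifying it holds at each step (the disposable block stays nonempty until the total count reaches exactly $n$) is routine once set up correctly.
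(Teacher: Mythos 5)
The first statement of the lemma is where your argument breaks down, and it breaks exactly at the point you flag as the ``main obstacle.'' The $n$-absorbing property only asserts that \emph{some} factor of a presentation $x_1\cdots x_{n+1}\in I$ may be deleted; it never lets you choose which one. So the step ``at each deletion step I delete from the second block'' is not available: presenting the product as the $n+1$ factors $a_1,\ldots,a_n,B$ (with $B$ the product of the disposable block), the property may instead hand you $a_1\cdots\widehat{a_j}\cdots a_n\cdot B\in I$, a deletion from the privileged block. You can try to repair this by replenishing the privileged slot with a disposable copy of $a_j$, but an adversarial run of deletions can keep hitting the same index and exhaust every copy of one $a_j$ while the total degree is still above $n$: for $n=2$, $N=3$, starting from $a_1^3a_2^3\in I$ the process may terminate with only $a_2^2\in I$, which says nothing about $a_1a_2\in I$. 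What deletion arguments do give is the single-element statement ($a\in\sqrt I\Rightarrow a^{\omega(I)}\in I$) or that \emph{some} degree-$\omega(I)$ monomial in the $a_i$ lies in $I$ --- not the mixed product. Indeed, $(\sqrt I)^{\omega(I)}\subseteq I$ is precisely the Anderson--Badawi radical-formula conjecture, which is why the paper does not prove it but cites \cite{Walker} and \cite{Donadze2}; those proofs require genuinely different ideas, and no elementary pigeonhole reduction of the kind you sketch is known to work.

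Your second half is in much better shape: the case analysis showing that a primary ideal $Q$ with $P^{e}\subseteq Q$ (where $P=\sqrt Q$, $e=e(Q)$) is strongly $e$-absorbing --- either at least $e$ of the ideals $I_j$ lie in $P$, or two of them do not and primariness forces the product of the remaining ones into $Q$ --- is correct, and it is a clean self-contained substitute for the paper's citation of \cite[Theorems 6.3(c), 6.6]{Anderson}. But the chain of equalities $\omega(Q)=\omega^{\bullet}(Q)=e(Q)$ still needs the lower bound $e(Q)\le\omega(Q)$, which in your write-up rests on the broken first part. To complete the lemma you must either invoke the cited radical formula, or give a separate argument for the primary case (this is Anderson--Badawi's Theorem 6.3(c), which does not depend on the general conjecture); as written, the lower bound is not established.
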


\begin{proof}
The first statement follows either  \cite[Corollary 3]{Walker} or \cite{Donadze2}. The second statements follows from 
\cite[Theorem 6.3(c), Theorem 6.6]{Anderson}.
\end{proof}

This raises the question then if for an arbitrary ideal $I$ whether $\omega(I)$ can be described purely in terms of Noether exponents or possibly other well-known ring-theoretic invariants. This has been investigated to some extent by others in at least one case. Namely, Moghimi and Naghani \cite[Theorem 2.21(1)]{Moghimi} show that in a discrete valuation ring $R$, $\omega(I)$ is precisely the length of the $R$-module $R/I$. 

In this spirit, we attempt to give in this paper a description of $\omega(I)$ in terms of other ring-theoretic invariants in the special case that $I$ is a monomial ideal of a polynomial ring over a field. In some cases, our arguments are general enough to also give the same results for $\omega^{\bullet}(I)$, and thus as a side-effect we can show that in some cases the notion of $n$-absorbing ideal and strongly $n$-absorbing ideal coincide as Anderson and Badawi conjecture.   

The present paper is divided into two parts. In section 2, we review the definitions and facts concerning $n$-absorbing ideals and monomial ideals. Using these we calculate $\omega(I)$ for primary monomial ideals by computing Noether exponents and the standard primary decomposition of monomial ideals. These results lead to the study of how $\omega(I)$ can be explicitly computed from the generating set of $I$ when $I$ is a monomial ideal of $R=k[x_{1},\cdot\cdot\cdot, x_{n}]$ with $n\le 3$ in the following section.

The second part is section 4, where we define and investigate $\omega$-linear monomial ideals, i.e., monomial ideals $I$ such that $\omega(I^{m})=m\omega(I)$ for each $m\in\mathbb{N}$. We give a characterization theorem for primary $\omega$-linear monomial ideals, and in particular show that every integrally closed monomial ideal in $R=k[x,y]$ and the edge ideal of a cycle is $\omega$-linear.

\section{Some Background}

As a prerequisite of the main section of this paper, we briefly review some of the basic material excerpted from \cite{Herzog} regarding monomial ideals, and show that $\omega(I)$ can be directly calculated from the generators of $I$ when $I$ is a primary monomial ideal.

Let $k$ be a field and $R=k[x_{1},\cdot\cdot\cdot, x_{n}]$ be the polynomial ring with $n$ variables over $k$. An element of $R$ of the form $x_{1}^{a_{1}}\cdot\cdot\cdot x_{n}^{a_{n}}$ with $a_{i}\in \mathbb{N}_{0}$ is called a \textit{monomial}, and an ideal of $R$ generated by monomials is called a \textit{monomial ideal}. The degree of $f=x_{1}^{a_{1}}\cdot\cdot\cdot x_{n}^{a_{n}}$, denoted by $\deg(f)$, is defined to be $a_{1}+\cdot\cdot\cdot+a_{n}$. $G(I)$ will denote the set of monomials in $I$ which are minimal with respect to divisibility. Any element of $R$ can be written uniquely as a $k$-linear combination of monomials; that is, given $f\in R$, we may write $f=\sum a_{u}u$ where the sum is taken over the monomial ideals of $R$ and $a_{u}\in k$ for each monomial $u$. Then the \textit{support} of $f$, denoted by $\text{supp}(f)$, is the set of monomials $u$ such that $a_{u}\neq 0$. An ideal $I$ of a ring $R$ is \textit{irreducible} if there are no ideals $I_{1}, I_{2}$ of $R$ such that $I=I_{1}\cap I_{2}$ and $I\subsetneq I_{1}$, $I\subsetneq I_{2}$. We denote by $\mathfrak{m}$ the unique maximal homogeneous ideal of $R$.


\begin{lemm}
\label{support}
(\cite[Chapter 1]{Herzog}) Let $R = k[x_{1},\ldots, x_{n}]$ and $I$ a monomial ideal of $R$ generated by monomials $u_{1},\cdot\cdot\cdot, u_{r}$ of $R$. Then the following hold:

\begin{enumerate}[label=(\roman*)]
\item Given a monomial $f\in I$, there exists $i\in\{1,\cdot\cdot\cdot, r\}$ so $u_{i}|f$.
\item $G(I)$ is the unique minimal set of monomial generators of $I$.
\item $I$ can be written as a finite intersection of ideals of the form $(x_{i_{1}}^{d_{1}},\ldots, x_{i_{m}}^{d_{m}})$. An irredundant presentation of this form is unique ($I=Q_{1}\cap\cdots\cap Q_{r}$ is \textrm{irredundant} if none of the ideals $Q_{i}$ can be omitted).

\item $I$ is irreducible if and only if $I$ is of the form $(x_{i_{1}}^{d_{1}},\ldots, x_{i_{m}}^{d_{m}})$. Moreover, every irreducible monomial ideal of the form $(x_{i_{1}}^{d_{1}},\ldots, x_{i_{m}}^{d_{m}})$ is $(x_{i_{1}},\ldots, x_{i_{m}})$-primary.

\item If $J$ is another monomial ideal of $R$, then
\[ I \cap J=(\{lcm(u,v)\mid u \in G(I), v \in G(J)\}).   \]
In particular, if $a$ and $b$ are coprime monomials of $R$ and $I$ is a monomial ideal of $R$, then $(ab,I)=(a,I)\cap(b,I)$.

\item An ideal $I'$ of $R$ is monomial if and only if for each $f\in I'$, $\supp(f)\subseteq I'$.

\end{enumerate}
\end{lemm}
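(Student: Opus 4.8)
The plan is to base everything on the single structural fact that the monomials form a $k$-vector-space basis of $R$; equivalently, a $k$-subspace of $R$ is a monomial ideal exactly when it is the $k$-span of a set of monomials closed upward under divisibility. I would establish (i) and (vi) first. For (i), write a monomial $f\in I$ as $f=\sum_{j=1}^{r}g_{j}u_{j}$, expand each $g_{j}$ in the monomial basis, and observe that $f$ is thereby a $k$-linear combination of monomial multiples of the $u_{j}$; since distinct monomials are linearly independent and $f$ is itself a monomial, $f$ must equal one of those multiples, so $u_{j}\mid f$ for some $j$. For (vi), if $I'$ is monomial and $f\in I'$, then by (i) every monomial of $\supp(f)$ is divisible by a monomial generator, hence lies in $I'$; conversely if $\supp(f)\subseteq I'$ for all $f\in I'$, then $I'$ is the $k$-span of the monomials it contains and so is monomial.

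Parts (ii) and (v) then follow quickly. For (ii): divisibility on monomials satisfies the descending chain condition (a proper divisor has strictly smaller degree), so every monomial of $I$ is divisible by a divisibility-minimal one, i.e.\ by an element of $G(I)$; with (i) this shows $G(I)$ generates $I$, and finiteness is Dickson's lemma. Distinct elements of $G(I)$ are incomparable under divisibility, so $G(I)$ is a minimal generating set, and it is the only monomial one: given any monomial generating set $S$ and $u\in G(I)$, by (i) $u$ is divisible by some $v\in S\subseteq I$, which is in turn divisible by some $w\in G(I)$, and $w\mid v\mid u$ forces $w=u=v\in S$. For (v): $I\cap J$ is monomial by (vi), hence generated by the monomials it contains; any such monomial is divisible by some $u\in G(I)$ and some $v\in G(J)$, hence by $\mathrm{lcm}(u,v)$, while conversely each $\mathrm{lcm}(u,v)$ lies in $I\cap J$. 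The special case $(ab,I)=(a,I)\cap(b,I)$ for coprime $a,b$ is similar: a monomial in the intersection either lies in $I$ or is divisible by both $a$ and $b$, hence by $ab=\mathrm{lcm}(a,b)$.

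For (iii), I would prove existence by induction on $\sum_{u\in G(I)}(|\supp u|-1)$: if every element of $G(I)$ is a pure power, minimality forces distinct variables and $I$ already has the stated form; otherwise some $u\in G(I)$ factors as $u=ab$ with $a,b$ coprime nonunits, so with $I'=(G(I)\setminus\{u\})$ the special case of (v) gives $I=(a,I')\cap(b,I')$, where $G((a,I'))\subseteq\{a\}\cup G(I')$ and $|\supp a|<|\supp u|$ (similarly for $b$), lowering the complexity; deleting redundant factors yields an irredundant presentation. For (iv), one implication is then immediate: an irreducible monomial ideal with irredundant presentation $Q_{1}\cap\dots\cap Q_{r}$ has $r=1$ and so has the stated form. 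Conversely, after relabeling, $I=(x_{1}^{d_{1}},\dots,x_{m}^{d_{m}})$ satisfies $R/I\cong A[x_{m+1},\dots,x_{n}]$ with $A=k[x_{1},\dots,x_{m}]/(x_{1}^{d_{1}},\dots,x_{m}^{d_{m}})$ an Artinian local $k$-algebra whose socle is the one-dimensional space $k\cdot\overline{w}$, $w=x_{1}^{d_{1}-1}\cdots x_{m}^{d_{m}-1}$; since $(0)$ is primary in $A$ and primary ideals extend to primary ideals of polynomial rings, $I$ is primary with $\sqrt I=(x_{1},\dots,x_{m})$, and since $A$ has a one-dimensional socle, $(0)$ is irreducible in $A[x_{m+1},\dots,x_{n}]$: every nonzero ideal there contains a nonzero element of the form $\overline{w}\cdot g(x_{m+1},\dots,x_{n})$ (push the coefficients of a nonzero element into the socle by repeatedly multiplying by elements of the maximal ideal of $A$), and any two such elements have a common nonzero multiple of the same shape.

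I expect the sole real obstacle to be the \emph{uniqueness} of the irredundant irreducible decomposition in (iii); existence and all the other items are routine bookkeeping with the monomial basis. To prove uniqueness I would pass to the combinatorial picture: the monomials not in $I$ form a ``staircase'' region, and the irreducible components of $I$ should be put in bijection with its outer corners — equivalently, after localizing at each monomial associated prime $\mathfrak p$ of $I$ to reduce to the $\mathfrak p$-primary (hence Artinian) case, the components correspond to the monomials that are maximal under divisibility among those not in $I$, a description depending only on $I$ itself. This pins down the set of components independently of the chosen presentation and hence gives uniqueness.
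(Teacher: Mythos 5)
The paper gives no argument for this lemma at all --- it simply cites \cite[Chapter 1]{Herzog} --- so your proposal has to stand on its own, and most of it does. The basis-expansion arguments for (i), (ii), (v) and (vi), the splitting induction via $(ab,I')=(a,I')\cap(b,I')$ for the existence half of (iii), and the socle argument in (iv) showing that a pure-power ideal is primary and irreducible are all correct. (Two small cosmetic points: in (vi) you invoke (i) for the monomials of $\supp(f)$, which are not yet known to lie in $I'$ --- the same expansion you used to prove (i) gives the claim directly; and note that your forward direction of (iv) needs only the existence part of (iii), so it is unaffected by what follows.)

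The genuine gap is exactly where you predicted it: the uniqueness of the irredundant presentation in (iii), for which you offer only a plan, and the plan as stated does not work. Localizing at an associated prime $\mathfrak{p}$ does not reduce to the $\mathfrak{p}$-primary (Artinian) case, because every component whose radical is merely \emph{contained} in $\mathfrak{p}$ survives localization, so the localized ideal is in general not primary. Moreover, the description of the components as ``monomials maximal under divisibility among those not in $I$'' is literally false outside the Artinian case: for $I=(x)\subseteq k[x,y]$ there is no monomial maximal under divisibility outside $I$ at all; making the staircase-corner picture work in general requires corners ``at infinity'' as in \cite{Miller}, which is precisely the bookkeeping you omit. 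A much shorter route, essentially the one behind the cited proof in \cite{Herzog}, avoids all of this: first show that if $Q=(x_{i_{1}}^{d_{1}},\ldots,x_{i_{m}}^{d_{m}})$ contains an intersection $I_{1}\cap\cdots\cap I_{s}$ of monomial ideals, then $Q\supseteq I_{j}$ for some $j$ --- otherwise choose monomials $m_{j}\in I_{j}\setminus Q$, so that $\deg_{x_{i_{t}}}(m_{j})<d_{t}$ for every $t$ by (i), and observe that $\mathrm{lcm}(m_{1},\ldots,m_{s})$ lies in $I_{1}\cap\cdots\cap I_{s}$ but still has $x_{i_{t}}$-degree $<d_{t}$ for all $t$, hence is not in $Q$, a contradiction. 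Now given two irredundant presentations $\bigcap_{i}Q_{i}=\bigcap_{j}Q'_{j}$, for each $i$ there is $j$ with $Q'_{j}\subseteq Q_{i}$ and then some $i'$ with $Q_{i'}\subseteq Q'_{j}\subseteq Q_{i}$; irredundancy forces $i'=i$, hence $Q_{i}=Q'_{j}$, and by symmetry the two sets of components coincide. Replacing your last paragraph with this argument closes the only real hole in the proposal.
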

By Lemma \ref{support}(iv), the irredundant unique decomposition of Lemma \ref{support}(iii) is also a primary decomposition of $I$, which is known as the \textit{standard decomposition} of $I$ (see \cite[P. 12]{Herzog}). We will also need the following characterization of primary monomial ideals:

\begin{lemm}
\label{pch}
\cite[Exercise 3.6]{Eisenbud}
Let $R=k[x_{1},\cdot\cdot\cdot, x_{n}]$ and  $P=(x_{i_{1}},\cdot\cdot\cdot, x_{i_{r}})$ a monomial prime ideal of $R$. Then given a $P$-primary monomial ideal $Q$, there exists $a_{1},\cdot\cdot\cdot, a_{r}\in \mathbb{N}$ and monomials $f_{1},\cdot\cdot\cdot, f_{s} \in k[x_{i_{1}},\cdot\cdot\cdot, x_{i_{r}}]$ so $G(Q)=\{x_{1}^{a_{1}},\cdot\cdot\cdot, x_{i_{r}}^{a_{r}}, f_{1},\cdot\cdot\cdot, f_{s}\}$. Conversely, every monomial ideal of this form is a $P$-primary ideal.
\end{lemm}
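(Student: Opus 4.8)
The plan is to prove the two implications separately. The forward implication will be a bookkeeping argument about the unique minimal generating set $G(Q)$ combined with the primary property, and the converse will be a reduction to the polynomial subring in the variables $x_{i_1},\dots,x_{i_r}$, in which $P$ becomes the maximal ideal.

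For the forward direction I would proceed as follows. Assume $Q$ is $P$-primary, so $\sqrt Q=P=(x_{i_1},\dots,x_{i_r})$. For each $j$, since $x_{i_j}\in\sqrt Q$ there is a least integer $a_j\ge 1$ with $x_{i_j}^{a_j}\in Q$, and I would check that $x_{i_j}^{a_j}\in G(Q)$: every proper monomial divisor of $x_{i_j}^{a_j}$ is a smaller power $x_{i_j}^{b}$ with $0\le b<a_j$, which cannot lie in $Q$ --- by minimality of $a_j$ if $b\ge 1$, and because $Q$ is a proper ideal if $b=0$. Put $\{f_1,\dots,f_s\}:=G(Q)\setminus\{x_{i_1}^{a_1},\dots,x_{i_r}^{a_r}\}$; it remains to show each $f_l$ lies in $k[x_{i_1},\dots,x_{i_r}]$. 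If some variable $x_t$ with $t\notin\{i_1,\dots,i_r\}$ divided $f_l$, then $x_t\notin P=\sqrt Q$ (a monomial lies in the monomial prime $P$ precisely when one of $x_{i_1},\dots,x_{i_r}$ divides it), so writing $f_l=x_t h$ the primariness of $Q$ forces $h\in Q$; but $h$ properly divides $f_l$, contradicting $f_l\in G(Q)$. Hence $f_l\in k[x_{i_1},\dots,x_{i_r}]$, giving the asserted form of $G(Q)$.

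For the converse, assume $G(Q)$ has the stated shape with the $f_l$ nonconstant monomials of $R':=k[x_{i_1},\dots,x_{i_r}]$. Then every element of $G(Q)$ lies in $R'$, so $Q=Q'R$, where $Q'$ denotes the ideal of $R'$ generated by $G(Q)$. A pigeonhole count on degrees shows $(\mathfrak m')^{N}\subseteq Q'$ for $N:=a_1+\dots+a_r$ --- any monomial of $R'$ of degree $N$ must have $x_{i_j}$-exponent at least $a_j$ for some $j$ --- where $\mathfrak m'=(x_{i_1},\dots,x_{i_r})R'$; together with $Q'\subseteq\mathfrak m'$ this yields $\sqrt{Q'}=\mathfrak m'$. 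Since $\mathfrak m'$ is maximal in $R'$, the ideal $Q'$ is $\mathfrak m'$-primary, because an ideal whose radical is maximal is primary. Finally $R$ is a polynomial ring over $R'$ in the remaining variables, and primariness is preserved under polynomial ring extension (which follows from McCoy's theorem on zero divisors in polynomial rings), so $Q=Q'R$ is primary with radical $\mathfrak m' R=P$; that is, $Q$ is $P$-primary. An alternative route is via the standard decomposition of Lemma \ref{support}(iii): its components involve only the variables among $x_{i_1},\dots,x_{i_r}$, each is $P$-primary by Lemma \ref{support}(iv) together with the identity $\sqrt Q=P$, and a finite intersection of $P$-primary ideals is again $P$-primary.

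The forward direction should be essentially routine. The step I expect to require the most care is the converse: the hypothesis on the shape of $G(Q)$ is genuinely needed and cannot be weakened to ``$\sqrt Q$ is prime'' --- for instance $(x^2,xy)\subseteq k[x,y]$ has prime radical $(x)$ but is not primary --- and the correct way to exploit it is the reduction to $R'$, where $P$ becomes maximal. The only mildly nontrivial ingredient is then the stability of primariness under polynomial ring extension.
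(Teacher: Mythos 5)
Your proof is correct. Note that the paper does not prove this lemma at all --- it is quoted with a citation to \cite[Exercise 3.6]{Eisenbud} --- so there is no in-paper argument to compare against; what you have written is a sound, self-contained solution of that exercise. The forward direction (minimality of $a_j$ puts $x_{i_j}^{a_j}$ in $G(Q)$; primariness plus $x_t\notin\sqrt{Q}$ excludes extraneous variables from the $f_l$) is exactly the standard argument, and your converse via the reduction to $R'=k[x_{i_1},\ldots,x_{i_r}]$, where the radical becomes maximal, together with stability of primariness under adjoining variables (McCoy), is valid; your alternative route through the standard decomposition of Lemma \ref{support}(iii)--(iv) also works, since each irreducible component $T$ involves only the variables $x_{i_1},\ldots,x_{i_r}$ and contains $Q$, hence $\sqrt{T}=P$, and a finite intersection of $P$-primary ideals is $P$-primary.
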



\begin{coro}
\label{monprimary}
Let $P$ be a prime monomial ideal and $I,J$ be $P$-primary monomial ideals of $R$. Then both $I\cap J$ and $IJ$ are $P$-primary monomial ideals. Moreover, $I:J$ is a $P$-primary monomial ideal provided $J \not\subset I$.
\end{coro}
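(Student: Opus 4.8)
The plan is to reduce the whole statement to the explicit description of $P$-primary monomial ideals given in Lemma \ref{pch}. After relabelling the variables I may assume $P=(x_1,\ldots,x_r)$. By Lemma \ref{pch}, a monomial ideal $Q$ is $P$-primary exactly when (a) every monomial in $G(Q)$ lies in $k[x_1,\ldots,x_r]$, and (b) for each $j\in\{1,\ldots,r\}$ some positive power of $x_j$ belongs to $Q$ --- equivalently, to $G(Q)$, since the least exponent $c_j$ with $x_j^{c_j}\in Q$, if it exists, yields $x_j^{c_j}\in G(Q)$. Applying Lemma \ref{pch} to $I$ and $J$ I obtain positive integers $a_j,b_j$ with $x_j^{a_j}\in G(I)$, $x_j^{b_j}\in G(J)$, and $G(I),G(J)\subseteq k[x_1,\ldots,x_r]$. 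I will then check (a) and (b) for $I\cap J$, $IJ$, and $I:J$ in turn.

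For $I\cap J$ and $IJ$: by Lemma \ref{support}(v), $I\cap J$ is generated by the monomials $\mathrm{lcm}(u,v)$ with $u\in G(I)$, $v\in G(J)$, while $IJ$ is generated by the monomials $uv$ with $u\in G(I)$, $v\in G(J)$; all of these monomials lie in $k[x_1,\ldots,x_r]$, and since the minimal monomial generating set of an ideal is contained in any monomial generating set (Lemma \ref{support}(i),(ii)), condition (a) holds in both cases. For (b), $\mathrm{lcm}(x_j^{a_j},x_j^{b_j})=x_j^{\max(a_j,b_j)}\in I\cap J$ and $x_j^{a_j}x_j^{b_j}=x_j^{a_j+b_j}\in IJ$. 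Hence by Lemma \ref{pch} both $I\cap J$ and $IJ$ are $P$-primary monomial ideals. The only point here that is not mere formalism is the product: a product of two $P$-primary ideals need not be primary in a general ring, so it is precisely the monomial structure recorded in Lemma \ref{pch} that makes this work --- this is the step I expect to be the crux.

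For $I:J$ under the hypothesis $J\not\subseteq I$: I will use $I:J=\bigcap_{v\in G(J)}(I:v)$ together with the facts that $I:v=R$ when $v\in I$ and that $J\not\subseteq I$ forces some $v_0\in G(J)$ with $v_0\notin I$; hence $I:J=\bigcap_{v\in G(J),\,v\notin I}(I:v)$ is a nonempty finite intersection, and by the case already treated it suffices to prove that $I:v$ is a $P$-primary monomial ideal whenever $v\in G(J)$ and $v\notin I$. Now $I:v$ is the monomial ideal generated by the monomials $u/\gcd(u,v)$ with $u\in G(I)$; since $\gcd(u,v)$ divides $u\in k[x_1,\ldots,x_r]$, each such monomial lies in $k[x_1,\ldots,x_r]$, giving (a); and since $v\notin I$ no element of $G(I)$ divides $v$, so in particular $x_j^{a_j}\nmid v$ and therefore $x_j^{a_j}/\gcd(x_j^{a_j},v)$ is a positive power of $x_j$ lying in $I:v$, giving (b); moreover $I:v\neq R$ because $v\notin I$. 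Thus Lemma \ref{pch} applies and $I:v$ is $P$-primary monomial, completing the argument. The remaining work is the verification of the monomial formulas for $I\cap J$, $IJ$, and $I:v$ used above, all of which are standard and recorded in \cite{Herzog}.
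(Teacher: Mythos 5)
Your proof is correct and follows essentially the same route as the paper, which simply cites Lemma \ref{pch} (the shape of $G(Q)$ for a $P$-primary monomial ideal) together with Lemma \ref{support}(v); you have just written out the verification in detail, including the reduction $I:J=\bigcap_{v\in G(J)}(I:v)$ for the colon ideal. The only (harmless) point left implicit is properness of $I\cap J$ and $IJ$, which is automatic since both sit inside $I\subseteq P\subsetneq R$.
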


\begin{proof}
This is an immediate consequence of Lemma \ref{support}.(v) and Lemma \ref{pch}. 
\end{proof}

We can now calculate $\omega(I)$, where $I$ is an irreducible monomial ideal. \\

\begin{lemm} \label{powermonomial}
Let $R = k[x_{1},\ldots,x_{n}]$ denote a polynomial ring over a field $k$. Let $I = (x^{d_{1}}_{i_{1}},\ldots, x_{i_{m}}^{d_{m}})$, where $d_{1},\ldots, d_{n} \in \mathbb{N}$ and $1 \leq i_{1} < i_{2} < \cdots < i_{m} \leq n$. Then $\omega(I)=\omega^{\bullet}(I)=e(I)=d_{1} + \cdots + d_{m} - m + 1$.
\end{lemm}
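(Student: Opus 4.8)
The plan is to establish the three equalities by proving the chain $e(I) \le \omega(I) \le \omega^{\bullet}(I) \le e(I)$ together with the explicit computation $e(I) = d_1 + \cdots + d_m - m + 1$; since $\omega(I) \le \omega^{\bullet}(I)$ always holds and $e(I) \le \omega(I)$ holds by Lemma \ref{fund}, the whole thing collapses once we show both $e(I) = d_1 + \cdots + d_m - m + 1$ and $\omega^{\bullet}(I) \le e(I)$. The latter is immediate: by Lemma \ref{support}(iv), $I$ is $(x_{i_1}, \ldots, x_{i_m})$-primary, so it is a primary ideal, and then Lemma \ref{fund} gives $\omega^{\bullet}(I) = e(I)$ directly. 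So the real content is the computation of the Noether exponent $e(I)$, i.e., the smallest $\mu$ with $(\sqrt{I})^\mu \subseteq I$.

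First I would identify $\sqrt{I}$: since $I = (x_{i_1}^{d_1}, \ldots, x_{i_m}^{d_m})$, we have $\sqrt{I} = P := (x_{i_1}, \ldots, x_{i_m})$. Set $N := d_1 + \cdots + d_m - m + 1$. To show $P^N \subseteq I$, note $P^N$ is generated by monomials $x_{i_1}^{a_1} \cdots x_{i_m}^{a_m}$ with $a_1 + \cdots + a_m = N$ (and possibly other variables, but those only help); by a pigeonhole argument, if $a_1 + \cdots + a_m = d_1 + \cdots + d_m - m + 1$ then we cannot have $a_j \le d_j - 1$ for every $j$ simultaneously (that would force the sum to be at most $d_1 + \cdots + d_m - m$), so some $a_j \ge d_j$, whence $x_{i_j}^{d_j}$ divides the generator and it lies in $I$ by Lemma \ref{support}(i). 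Conversely, to see $P^{N-1} \not\subseteq I$, exhibit the monomial $x_{i_1}^{d_1 - 1} \cdots x_{i_m}^{d_m - 1}$, which has degree $d_1 + \cdots + d_m - m = N - 1$ so lies in $P^{N-1}$, but is not divisible by any $x_{i_j}^{d_j}$, hence is not in $I$ again by Lemma \ref{support}(i). This shows $e(I) = N$ exactly.

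I expect the main (mild) obstacle to be bookkeeping rather than conceptual: one must be careful that $P^N$ may contain monomials involving variables outside $\{x_{i_1}, \ldots, x_{i_m}\}$, but since a generator of $P^N$ is a product of $N$ elements of $\{x_{i_1}, \ldots, x_{i_m}\}$, it is in fact a monomial purely in those variables of total degree $N$, so the pigeonhole step applies cleanly; and one must invoke Lemma \ref{support}(i) (not just inspection) to justify that a monomial lies in $I$ exactly when it is divisible by one of the generators $x_{i_j}^{d_j}$. With those two points handled, the argument is complete. No deeper input about $n$-absorbing ideals is needed beyond Lemma \ref{fund} and the primary-ness of irreducible monomial ideals from Lemma \ref{support}(iv).
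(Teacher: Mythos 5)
Your proposal is correct and follows essentially the same route as the paper: reduce to the Noether exponent via the fact that $I$ is $(x_{i_1},\ldots,x_{i_m})$-primary (the paper cites Lemma \ref{pch}, you cite Lemma \ref{support}(iv), both fine) and Lemma \ref{fund}, then compute $e(I)$ by the pigeonhole argument on exponents together with the witness monomial $x_{i_1}^{d_1-1}\cdots x_{i_m}^{d_m-1}$. Your write-up just makes explicit the bookkeeping that the paper leaves implicit.
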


\begin{proof}
Since $I$ is a $(x_{i_{1}}, x_{i_{2}}, \cdot\cdot\cdot, x_{i_{m}})$-primary ideal by Lemma \ref{pch}, the first two equalities follow from Lemma \ref{fund}. Thus it suffices to show that $e(I)=r$, where $r=d_{1} + \cdots + d_{m} - m + 1$. We have $\sqrt{I} = (x_{i_{1}},\ldots, x_{i_{m}})$. For $N \in \mathbb{N}$, 
$(\sqrt{I})^{N} \subseteq I$ if and only if for every $c_{1},\ldots,c_{m} \in \mathbb{N}_{0}$ with $c_{1} + \cdots + c_{m} = N$, we have $x^{c_{1}}_{i_{1}} \cdots x^{c_{m}}_{i_{m}} \in I$. By Lemma \ref{support}(i), the latter happens precisely when $c_{i} \geq d_{i}$ for some $1 \leq i \leq m$.  Thus $e(I)=r$.  
\end{proof}

Next, we produce a way to calculate $\omega(I)$ when $I$ is a monomial primary ideal not necessarily generated by pure powers.

\begin{lemm}
\label{small}
Let $I$ be an ideal of a ring $R$. Suppose there is $P \in \Spec(R)$ such that $I=J_{1}\cap\cdot\cdot\cdot \cap J_{r}$, where $J_{i}$ are ideals of $R$ with $\sqrt{J_{i}}=P$ for each $i \in \{ 1, \ldots, r\}$. Then $e(I)=\max_{1\le i \le r} \{e(J_{i})\}$.
\end{lemm}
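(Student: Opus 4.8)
The plan is to establish both inequalities. Set $\mu := \max_{1 \le i \le r} \{e(J_i)\}$; note each $e(J_i)$ is finite (otherwise the statement should be read with the convention $\infty$, and the argument below still works formally), so $\mu < \infty$. The key observation is that all the $J_i$, and hence $I$, have the same radical: since $\sqrt{J_i} = P$ for every $i$, we get $\sqrt{I} = \sqrt{J_1 \cap \cdots \cap J_r} = \sqrt{J_1} \cap \cdots \cap \sqrt{J_r} = P$. This reduces everything to comparing powers of the single ideal $P$ against the $J_i$'s.

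For the inequality $e(I) \le \mu$: by definition of $\mu$ we have $(\sqrt{J_i})^{\mu} = P^{\mu} \subseteq J_i$ for each $i$ (using $e(J_i) \le \mu$ and that $P^{e(J_i)} \subseteq J_i$ implies $P^{\mu} \subseteq J_i$ since $\mu \ge e(J_i)$). Intersecting over $i$ gives $P^{\mu} \subseteq \bigcap_{i=1}^{r} J_i = I$, so $(\sqrt{I})^{\mu} = P^{\mu} \subseteq I$, whence $e(I) \le \mu$. For the reverse inequality $\mu \le e(I)$: fix any index $j$ with $e(J_j) = \mu$. Since $I \subseteq J_j$, we have $(\sqrt{I})^{e(I)} = P^{e(I)} \subseteq I \subseteq J_j$, so $(\sqrt{J_j})^{e(I)} \subseteq J_j$; by minimality in the definition of the Noether exponent, $e(J_j) \le e(I)$, i.e. $\mu \le e(I)$. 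Combining, $e(I) = \mu$.

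I expect no serious obstacle here; the only point requiring a touch of care is the handling of the $\infty$ convention (if some $e(J_i) = \infty$, then the displayed containment $P^{\mu} \subseteq J_i$ fails for every finite $\mu$, forcing $e(I) = \infty$ as well, and conversely), but for the intended applications all the relevant ideals live in a Noetherian ring so all Noether exponents are finite and this degenerate case does not arise. The substantive content is entirely the radical computation $\sqrt{J_1 \cap \cdots \cap J_r} = \sqrt{J_1} \cap \cdots \cap \sqrt{J_r}$ together with monotonicity of $N \mapsto (P^N \subseteq J_i)$ in $N$.
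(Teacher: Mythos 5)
Your argument is correct and matches the paper's proof: both rest on the radical computation $\sqrt{I}=\sqrt{J_1}\cap\cdots\cap\sqrt{J_r}=P$ and the observation that $P^{\mu}\subseteq I$ holds if and only if $P^{\mu}\subseteq J_i$ for every $i$, which is exactly your two inequalities spelled out. No issues beyond that; the paper simply states the equivalence in one line where you separate the two directions.
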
  
  
\begin{proof}
Note that $\sqrt{I}=\sqrt{J_{1}}\cap\cdot\cdot\cdot\cap \sqrt{J_{r}}=P$. Thus given $\mu \in \mathbb{N}$, $(\sqrt{I})^{\mu} \subseteq I$ if and only if $(\sqrt{J_{i}})^{\mu} \subseteq J_{i}$ for each $i \in \{1, \ldots, r \}$, from which the conclusion of the lemma follows.
\end{proof}

\begin{coro}
\label{primary}
Let $R = k[x_{1},\ldots,x_{n}]$ denote a polynomial ring over a field $k$. If $Q$ is a monomial primary ideal of $R$  and $Q=\cap_{i=1}^{r}Q_{i}$ is its standard decomposition, then 
\begin{align*}
\omega(Q)=\omega^{\bullet}(Q)=\max_{1\le i\le r}\{e(Q_{i})\}.
\end{align*}


\end{coro}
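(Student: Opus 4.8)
The plan is to combine the three lemmas that immediately precede the statement. By Lemma \ref{fund}, since $Q$ is primary we have $\omega(Q)=\omega^{\bullet}(Q)=e(Q)$, so it suffices to compute the Noether exponent $e(Q)$. The standard decomposition $Q=\cap_{i=1}^{r}Q_{i}$ furnished by Lemma \ref{support}(iii)--(iv) expresses $Q$ as a finite intersection of irreducible monomial ideals $Q_{i}=(x_{j_{1}}^{d_{1}},\ldots,x_{j_{m}}^{d_{m}})$, each of which is $P_{i}$-primary for the prime $P_{i}=(x_{j_{1}},\ldots,x_{j_{m}})$ generated by the variables occurring in $Q_{i}$.

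Next I would observe that all the primes $P_{i}$ appearing in the standard decomposition of a primary ideal $Q$ are equal. Indeed, $\sqrt{Q}=\cap_{i=1}^{r}\sqrt{Q_{i}}=\cap_{i=1}^{r}P_{i}$, and since $Q$ is primary, $\sqrt{Q}=P$ is prime; an intersection of prime ideals can equal a prime $P$ only if $P$ is one of them and is contained in all the others, which for monomial primes $P_{i}=(x_{j}:j\in S_{i})$ forces $S_{i}=S$ for every $i$, hence $P_{i}=P$ for all $i$. (Alternatively, one can invoke the uniqueness of associated primes: every $P_{i}$ is an associated prime of $Q$, and a primary ideal has exactly one associated prime.) With $\sqrt{Q_{i}}=P$ for all $i$, Lemma \ref{small} applies directly and gives $e(Q)=\max_{1\le i\le r}\{e(Q_{i})\}$. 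Stringing these together yields $\omega(Q)=\omega^{\bullet}(Q)=e(Q)=\max_{1\le i\le r}\{e(Q_{i})\}$, which is the claimed formula.

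The only genuine content beyond citing the earlier lemmas is the verification that the radicals $\sqrt{Q_{i}}$ of the pieces of the standard decomposition all coincide with $\sqrt{Q}$; everything else is bookkeeping. I expect this to be the main (minor) obstacle, and it is dispatched by either of the two arguments above. One should also note in passing that the individual $e(Q_{i})$ in the maximum are explicitly computable by Lemma \ref{powermonomial}, namely $e(Q_{i})=d_{1}+\cdots+d_{m}-m+1$ if $Q_{i}=(x_{j_{1}}^{d_{1}},\ldots,x_{j_{m}}^{d_{m}})$, so the corollary indeed expresses $\omega(Q)$ purely in terms of the generators appearing in the standard decomposition of $Q$.
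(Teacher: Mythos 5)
Your overall route is exactly the intended one: the corollary is meant to fall out of Lemma \ref{fund} (so $\omega(Q)=\omega^{\bullet}(Q)=e(Q)$) together with Lemma \ref{small} applied to the standard decomposition, and you correctly isolate the one point that needs checking, namely that every component $Q_{i}$ of the standard decomposition satisfies $\sqrt{Q_{i}}=\sqrt{Q}$.

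However, your first justification of that point is not valid. From $\sqrt{Q}=\bigcap_{i}P_{i}=P$ you can conclude that $P\subseteq P_{i}$ for every $i$ and that $P=P_{j}$ for at least one $j$ (since $\prod_{i}P_{i}\subseteq P$), but an intersection of distinct monomial primes can perfectly well be prime: $(x)=(x)\cap(x,y)$ in $k[x,y]$. So ``$\bigcap_{i}P_{i}=P$ prime'' does not force $S_{i}=S$ for all $i$; what rules out components with strictly larger radical is the \emph{irredundancy} of the standard decomposition, which your first argument never uses. Your parenthetical alternative is the correct repair, but it too needs a word: the claim ``every $P_{i}$ is an associated prime of $Q$'' follows because grouping the irreducible components by radical yields a primary decomposition that is still irredundant (if a whole group could be dropped, then each single component in it could be dropped, contradicting irredundancy of the standard decomposition), and then the first uniqueness theorem gives $\{P_{i}\}\subseteq\Ass(R/Q)=\{P\}$. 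Alternatively, one can argue directly from Lemma \ref{pch} and Lemma \ref{support}: $Q$ contains a power of each variable of $P$, hence so does each component $T\supseteq Q$, so $\sqrt{T}\supseteq P$; and since $G(Q)$ involves only the variables of $P$, the (unique, by Lemma \ref{support}(iii)) standard decomposition is the extension of the one computed in $k[x_{i_{1}},\ldots,x_{i_{r}}]$, so $\sqrt{T}\subseteq P$ as well. With that step secured, the rest of your argument, and the closing remark that each $e(Q_{i})$ is computable via Lemma \ref{powermonomial}, is fine.
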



\begin{ex}
Let $R=k[x,y,z]$ with a field $k$ and $I=(x^{4}, y^{3}, z^{2}, xy, y^{2}z)$. Then repeatedly applying Lemma \ref{support}(v), we obtain the standard decomposition $I=(x, y^{2}, z^{2}) \cap (x^{4}, y, z^{2}) \cap (x, y^{3}, z)$. Thus by Lemma \ref{powermonomial} and Corollary \ref{primary},
\begin{align*}
\omega(I)=\omega^{\bullet}(I)=\max\{1+2+2-3+1, 4+1+2-3+1, 1+3+1-3+1\}=5.
\end{align*}
\end{ex}

\section{When $I$ is a monomial ideal of $R=k[x_{1},\cdot\cdot\cdot, x_{n}]$ with $n\le 3$}
In this section we show that when $I$ is a monomial ideal of $R=k[x_{1},\cdot\cdot\cdot, x_{n}]$
 with $n\le 3$, then $\omega(I)$ can be explicitly calculated from $G(I)$. We first prove a theorem analogous to \cite[Theorem 2.5]{Anderson2}.

\begin{lemm} \label{irreducible}
Let $R$ be a UFD and $p$ an irreducible element of $R$. Then given $n\in\mathbb{N}$, $I$ is an $n$-absorbing ideal of $R$ if and only if $pI$ is an $(n+1)$-absorbing ideal of $R$. In particular, $\omega(pI) = \omega(I) + 1$. 
\end{lemm}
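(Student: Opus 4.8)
The plan is to prove both implications of the biconditional, from which the statement about $\omega$ follows by taking $n = \omega(I)$: the forward direction gives $\omega(pI) \le \omega(I)+1$, while the contrapositive of the reverse direction (with $n = \omega(pI)-1$) gives $\omega(pI) \ge \omega(I)+1$, provided we also know $pI$ is $n$-absorbing for \emph{some} $n$ — but that is immediate once the forward implication is established, since $I$ being $\omega(I)$-absorbing forces $pI$ to be $(\omega(I)+1)$-absorbing. One mild subtlety worth flagging: if $I = R$ then $\omega(I) = 0$ and $pI = (p)$ is prime, hence $1$-absorbing with $\omega((p)) = 1 = \omega(I)+1$; I will dispose of this degenerate case first and assume $I \subsetneq R$ thereafter.

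For the forward direction, assume $I$ is $n$-absorbing and let $x_1 \cdots x_{n+2} \in pI$. The key device is unique factorization: since $R$ is a UFD and $p$ is irreducible (hence prime), $p \mid x_1 \cdots x_{n+2}$, so after relabeling $p \mid x_{n+2}$, say $x_{n+2} = p y$. Then $x_1 \cdots x_{n+1} y \in I$, and because $I$ is $n$-absorbing (applied to these $n+2$ elements, one of them being $y$), some product of $n+1$ of them lies in $I$. I will split into the case where the omitted element is $y$ — then $x_1 \cdots x_{n+1} \in I \subseteq pI$? no: I need $x_1\cdots x_{n+1}\in I$ to conclude, and then multiply back: actually $x_1 \cdots x_{n+1} \cdot 1 \in I$ does not immediately give membership in $pI$, so instead I observe $p x_1 \cdots x_{\hat{j}} \cdots x_{n+1} y = x_1 \cdots x_{\hat j} \cdots x_{n+2}$ and handle the two cases ($y$ omitted vs. $y$ retained) separately; in each case one assembles a product of $n+1$ of the original $x_i$ that is divisible by the needed factor of $p$ and lies in $I$. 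This bookkeeping — tracking where the factor $p$ ends up — is routine but must be done carefully.

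For the reverse direction, assume $pI$ is $(n+1)$-absorbing and let $x_1 \cdots x_{n+1} \in I$. Then $(p x_1) x_2 \cdots x_{n+1} = p(x_1 \cdots x_{n+1}) \in pI$, so by the $(n+1)$-absorbing hypothesis some product of $n+1$ of the $n+2$ factors $p x_1, x_2, \ldots, x_{n+1}$ lies in $pI$; writing that product as $p \cdot m$ with $m$ a product of $n$ of the $x_i$ (the factor $px_1$ is either kept or dropped, but in both cases one extracts a $p$), cancel $p$ using that $R$ is a domain to get $m \in I$, which is the desired conclusion. I expect the main obstacle to be none too deep — it is entirely the combinatorial case analysis of which factor absorbs or releases the extra copy of $p$ — but the one genuine point requiring care is ensuring that when the distinguished factor ($y$ above, or $px_1$ here) is the one \emph{omitted}, one still recovers a valid conclusion, which is exactly where the relabeling $x_{n+2} = py$ and the primality of $p$ are used in tandem.
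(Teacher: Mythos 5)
Your forward direction is essentially the paper's argument and is fine in outline: extract $p$ from one factor, use that an $n$-absorbing ideal is also $(n+1)$-absorbing, and treat separately the branch where the omitted factor is the quotient $y$. Two small points there: in that branch you must apply the $n$-absorbing hypothesis a \emph{second} time, to the product $x_{1}\cdots x_{n+1}$ of $n+1$ elements, before multiplying back by $x_{n+2}=py$; and be careful that your closing criterion ``divisible by the needed factor of $p$ and lies in $I$'' is not by itself sufficient for membership in $pI$ (e.g. $x^{2}+xy\in(x^{2},y)$ is divisible by $x$ but not in $x(x^{2},y)$) --- what actually does the job is that each assembled product is literally $p$ times an element of $I$.

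The genuine gap is in the reverse direction. As written, your factor list $px_{1},x_{2},\ldots,x_{n+1}$ has only $n+1$ entries, and the hypothesis that $pI$ is $(n+1)$-absorbing says nothing about products of $n+1$ elements, so it cannot be invoked at all; reading it charitably as the $n+2$ factors $p,x_{1},\ldots,x_{n+1}$, whose product $p(x_{1}\cdots x_{n+1})$ does lie in $pI$, the branch where some $x_{j}$ is omitted works as you say (cancel $p$, done), but the branch where $p$ itself is omitted only yields $x_{1}\cdots x_{n+1}\in pI$, and there you cannot ``extract a $p$ and cancel'': you merely learn that $p$ divides the product, hence divides some $x_{i}$, and writing $x_{i}=py_{i}$ and cancelling leaves a product of $n+1$ elements (with $x_{i}$ replaced by $y_{i}$) in $I$ --- no product of $n$ of the \emph{original} $x_{i}$ has been shown to lie in $I$, so you are back where you started. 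The paper closes exactly this case by a descent on the power of $p$ dividing the product: starting from a putative counterexample to $n$-absorbingness, it replaces $f_{1}$ by $f_{1}/p$, checks the new tuple is again a counterexample, reduces to the case where no $f_{i}$ is divisible by $p$, and then the branch ``$f\in pI$'' is impossible since $p\nmid f$, giving the contradiction. Some argument handling the case $x_{1}\cdots x_{n+1}\in pI$ (such as this descent) is indispensable; without it the reverse implication, and hence the inequality $\omega(pI)\ge\omega(I)+1$, is not established.
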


\begin{proof}
Suppose that $I$ is $n$-absorbing. Let $f_{1},\ldots, f_{n+2} \in R$ and $f_{1} \cdots f_{n+2} \in pI$. Then since $p$ is irreducible, $p \mid f_{i}$ for some $i$. Without loss of generality, suppose that $p \mid f_{1}$. Then $f_{1}/p \in R$, and so $(f_{1}/p)f_{2} \cdots f_{n+2} \in I$. Since $I$ is $n$-absorbing, and hence $(n+1)$-absorbing as well, we have that either $(f_{1}/p) f_{2} \cdots \widehat{f_{i}} \cdots f_{n+2} \in I$ for some $i \in \{2,\ldots, n+2\}$, in which case $f_{1}f_{2} \cdots \widehat{f_{i}} \cdots f_{n+2} \in pI$ and we're done, or $f_{2} \cdots f_{n+2} \in I$. This is a product of length $n+1$, so that since $I$ is $n$-absorbing, for some $j$ with $2 \leq j \neq n+1$, we have $f_{2} \cdots \widehat{f_{j}} \cdots f_{n+2} \in I$. Thus $p f_{2} \cdots \widehat{f_{j}} \cdots f_{n+2} \in pI$, and so $f_{1} f_{2} \cdots \widehat{f_{j}} \cdots f_{n+2} \in pI$. This shows that $pI$ is then $(n+1)$-absorbing, and 
$\omega(pI) \leq \omega(I)+1$.

To show the converse, suppose that $pI$ is an $(n+1)$-absorbing ideal. If $I$ is not an $n$-absorbing ideal, then there exists $f_{1},\cdot\cdot\cdot, f_{n+1} \in R$ such that $f=f_{1}\cdot\cdot\cdot f_{n+1} \in I$ but $f_{1}\cdot\cdot\cdot \widehat{f_{i}} \cdot\cdot\cdot f_{n+1} \not\in I$ for each $i$. Since $pI$ is $(n+1)$-absorbing and $pf \in pI$, it follows that either $pf_{1}\cdot\cdot\cdot \widehat{f_{i}} \cdot\cdot\cdot f_{n+1} \in pI$ for some $i$ or $f \in pI$. But the former is impossible by our choice of $f_{i}$'s, and without loss of generality we may assume that $p|f_{1}$. Now $(f_{1}/p)f_{2}\cdot\cdot\cdot f_{n} \in I$, and neither $\widehat{(f_{1}/p)}f_{2}\cdot\cdot\cdot f_{n+1}$ nor $(f_{1}/p)f_{2}\cdot\cdot\cdot \widehat{f_{i}} \cdot\cdot\cdot f_{n+1}$ is an element of $I$ for each $i\geq 2$.  
Therefore, since $R$ is a UFD, we may assume that none of $f_{i}$ are divisible by $p$. Now $pf_{1}\cdot\cdot\cdot f_{n+1} \in pI$, but $pf_{1}\cdot\cdot\cdot \widehat{f_{i}} \cdot\cdot\cdot f_{n+1} \not\in pI$ and $f_{1}\cdot\cdot\cdot f_{n+1} \not\in pI$, which contradicts the assumption that $pI$ is an $(n+1)$-absorbing ideal. Hence $I$ is an $n$-absorbing ideal and $\omega(pI)\ge \omega(I)+1$. 
\end{proof}

The following corollary is now immediate.

\begin{coro}
\label{prinmo}
Given a monomial $f$ and an ideal $I$ of $R=k[x_{1},\cdot\cdot\cdot, x_{n}]$, $\omega(fI)=$deg$(f)+\omega(I)$. In particular, $\omega(fR)=$deg$(f)$.
\end{coro}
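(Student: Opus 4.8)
The plan is to deduce the corollary directly from Lemma~\ref{irreducible} by writing the monomial $f$ as a product of its prime (variable) factors and inducting. Write $f = x_{i_{1}}^{a_{1}} \cdots x_{i_{m}}^{a_{m}}$ with $a_{1} + \cdots + a_{m} = \deg(f)$. Since $R = k[x_{1},\ldots,x_{n}]$ is a UFD and each $x_{i_{j}}$ is an irreducible element of $R$, each step of peeling off one variable factor is an instance of Lemma~\ref{irreducible}: applying it once gives $\omega(x_{i_{1}} I) = \omega(I) + 1$, applying it again to $x_{i_{1}} I$ gives $\omega(x_{i_{1}}^{2} I) = \omega(x_{i_{1}} I) + 1 = \omega(I) + 2$, and so on.

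Concretely, I would argue by induction on $\deg(f)$. The base case $\deg(f) = 0$ is $f = 1$, where $fI = I$ and the statement is trivial. For the inductive step, suppose $\deg(f) \geq 1$; then $f = x_{i} g$ for some variable $x_{i}$ and some monomial $g$ with $\deg(g) = \deg(f) - 1$. By Lemma~\ref{irreducible} applied to the irreducible element $p = x_{i}$ and the ideal $gI$, we have $\omega(x_{i}(gI)) = \omega(gI) + 1$. By the induction hypothesis, $\omega(gI) = \deg(g) + \omega(I)$. Combining, $\omega(fI) = \omega(x_{i} g I) = \deg(g) + \omega(I) + 1 = \deg(f) + \omega(I)$. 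The "in particular" clause follows by taking $I = R$ and recalling $\omega(R) = 0$, so $\omega(fR) = \deg(f)$.

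There is essentially no obstacle here; the only thing to be careful about is that Lemma~\ref{irreducible} is stated for a single irreducible element, so one must genuinely iterate it (or phrase the induction on $\deg f$ as above) rather than invoke it once for a non-prime monomial. One might alternatively state the induction on the number of prime factors counted with multiplicity, which is exactly $\deg(f)$, so the two formulations coincide. No new ideas beyond Lemma~\ref{irreducible} and the fact that $R$ is a UFD with the variables as irreducibles are needed.
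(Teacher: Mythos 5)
Your proof is correct and is exactly the argument the paper intends: the paper states the corollary as immediate from Lemma~\ref{irreducible}, namely factoring the monomial into variables and iterating the lemma $\deg(f)$ times, which is precisely your induction on $\deg(f)$ (with the $I=R$ case handled by the convention $\omega(R)=0$).
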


Given a monomial ideal $I$ with the standard decomposition $I=\cap_{\ell=1}^{t}T_{\ell}$, we can define an equivalence relation on $\{1,\cdot\cdot\cdot, t\}$ by defining $i\sim j$ iff $\sqrt{T_{i}}=\sqrt{T_{j}}$, and set $\{S_{i}\}_{i=1}^{r}$ to be the corresponding equivalence classes. Then $Q_{i}=\cap_{\ell\in S_{i}}T_{\ell}$ is a monomial primary ideal for each $i \in \{1, \ldots, r\}$, and $I=\cap_{i=1}^{r}Q_{i}$ is an irredundant primary decomposition of $I$.  We will call this decomposition the \textit{canonical primary decomposition} of $I$.

\begin{lemm}
\label{l}
Let $R=k[x_{1},\cdot\cdot\cdot, x_{n}]$. Let $I$ be a monomial ideal with canonical primary decomposition $I=\cap_{i=1}^{r}{Q_{i}}$. If there exists $k \in\{1,\cdot\cdot\cdot, r\}$ such that $\sqrt{Q_{i}}\subseteq \sqrt{Q_{k}}$ for all $i\in\{1,\cdot\cdot\cdot, r\}$, then $\omega(I)=\max\{e(Q_{k}), \omega(\cap_{1\le i\le r, i\neq k}Q_{i})\}$ and $\omega^{\bullet}(I)=\max\{e(Q_{k}), \omega^{\bullet}(\cap_{1\le i\le r, i\neq k}Q_{i})\}$.
\end{lemm}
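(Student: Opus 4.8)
The plan is to reduce the statement to a statement about Noether exponents together with Theorem~\ref{Wtheo} and Lemma~\ref{fund}, using the hypothesis that $\sqrt{Q_k}$ dominates all the other radicals. Write $I = Q_k \cap J$, where $J = \cap_{1 \le i \le r,\, i \ne k} Q_i$. The upper bound is the easy half: by Theorem~\ref{Wtheo}, $\omega(I) \le \omega(Q_k) + \omega(J)$, but this is not quite what we want; instead I would argue more carefully. Actually the cleaner route for the upper bound is to observe that since $\sqrt{Q_i} \subseteq \sqrt{Q_k}$ for every $i$, we have $\sqrt{I} = \cap_i \sqrt{Q_i} = \cap_{i \ne k} \sqrt{Q_i} = \sqrt{J}$ (the intersection with $\sqrt{Q_k}$ is redundant). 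I then want to show directly that $I$ is $m$-absorbing, where $m = \max\{e(Q_k), \omega(J)\}$.

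For the upper bound, suppose $f_1 \cdots f_{m+1} \in I$. Then $f_1 \cdots f_{m+1} \in J$, so since $J$ is $\omega(J)$-absorbing and $\omega(J) \le m$, some $(m)$-fold subproduct, say $g := f_1 \cdots \widehat{f_j} \cdots f_{m+1}$, lies in $J$. If this same subproduct also lies in $Q_k$ we are done, so assume $g \notin Q_k$. The point is to use that $g^{?} \in Q_k$ because of the radical domination: since $g \cdot f_j \in I \subseteq Q_k$ and $Q_k$ is $\sqrt{Q_k}$-primary, either $g \in Q_k$ (excluded) or $f_j \in \sqrt{Q_k}$. Now I would exploit that each $f_i$ involved in $g$ can be analyzed the same way: whenever a subproduct of the $f_i$'s lies in $J$ but not in $Q_k$, a suitable factor lies in $\sqrt{Q_k}$. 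Combining $e(Q_k)$ many such factors (each in $\sqrt{Q_k}$) whose product, times an element of $J$, still lies in $I$, and using $(\sqrt{Q_k})^{e(Q_k)} \subseteq Q_k$, should force a subproduct into $Q_k \cap J = I$. This combinatorial bookkeeping — tracking which factors land in $\sqrt{Q_k}$ and assembling $e(Q_k)$ of them while simultaneously keeping an element of $J$ in the product — is where I expect the main technical friction; in the monomial setting one can likely make it concrete by passing to monomial generators via Lemma~\ref{support}(i) and Lemma~\ref{support}(vi).

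For the lower bound, $\omega(I) \ge e(I)$ by Lemma~\ref{fund}, and $e(I) = \max\{e(Q_k), e(J)\}$: indeed $(\sqrt{I})^\mu = (\sqrt{J})^\mu \subseteq I = Q_k \cap J$ iff $(\sqrt{J})^\mu \subseteq Q_k$ and $(\sqrt{J})^\mu \subseteq J$; since $\sqrt{J} = \sqrt{I} \supseteq$ nothing forces the first to be governed by $e(Q_k)$ directly, but one checks $(\sqrt{I})^\mu \subseteq Q_k$ holds as soon as $\mu \ge e(Q_k)$ because $\sqrt{I} \subseteq \sqrt{Q_k}$, and $(\sqrt{I})^\mu \subseteq J$ needs $\mu \ge e(J)$; hence $e(I) = \max\{e(Q_k), e(J)\}$. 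Combined with $e(J) \le \omega(J)$, this gives $\omega(I) \ge \max\{e(Q_k), e(J)\}$, but I actually need $\omega(I) \ge \max\{e(Q_k), \omega(J)\}$, so the remaining point is $\omega(I) \ge \omega(J)$. For this I would take a product $f_1 \cdots f_{\omega(J)} \in J$ with no shorter subproduct in $J$ (witnessing that $J$ is not $(\omega(J)-1)$-absorbing), and multiply by a monomial $h \in \sqrt{Q_k}^{\,e(Q_k)} \subseteq Q_k$ chosen with support in the variables of $\sqrt{Q_k}$, arranged so that $h \cdot f_1 \cdots f_{\omega(J)} \in I$ but no subproduct of length $\omega(J)$ lies in $I$; this shows $I$ is not $(\omega(J)-1)$-absorbing. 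Here one must be careful that multiplying by $h$ does not accidentally create a short subproduct inside $J$ or $Q_k$ — controlling this is the delicate part of the lower bound, and I would handle it by choosing $h$ to be a high power of a single variable in $\sqrt{Q_k}$ (monomial machinery from Section~2 makes the non-membership checks routine). The statement for $\omega^\bullet$ follows by the identical argument, replacing $\omega$ by $\omega^\bullet$ throughout and using the strongly $n$-absorbing half of Theorem~\ref{Wtheo} and the fact that $e \le \omega^\bullet$ as well, together with $\omega^\bullet(Q_k) = e(Q_k)$ from Lemma~\ref{fund}.
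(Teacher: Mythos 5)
Your plan has a genuine gap in the lower bound, and it is precisely the half where the paper has to work hardest. The claimed identity $e(I)=\max\{e(Q_{k}),e(J)\}$ (with $J=\cap_{i\neq k}Q_{i}$) is false: take $R=k[x,y]$ and $I=(x^{2},xy^{5})=(x)\cap(x^{2},y^{5})$, so $Q_{k}=(x^{2},y^{5})$, $J=(x)$. Here $\sqrt{I}=(x)$ and $x^{2}\in I$, so $e(I)=2$, while $e(Q_{k})=6$ (and indeed $\omega(I)=6$ by Theorem \ref{hoo}). Your own parenthetical remark already notices that nothing forces $(\sqrt{I})^{\mu}\subseteq Q_{k}$ to require $\mu\ge e(Q_{k})$, and in fact it does not; so $\omega(I)\ge e(Q_{k})$ cannot be extracted from $e(I)$ at all. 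This is exactly the case $e(Q_{k})>\omega(J)$, which the paper handles by an explicit construction: pick an irreducible component $T$ of the standard decomposition with $\sqrt{T}=\sqrt{Q_{k}}$ and $e(T)=e(Q_{k})$, write $T=(x_{i_{1}}^{a_{1}},\ldots,x_{i_{l}}^{a_{l}})$, choose a monomial $g\in J\setminus T$ whose exponents are below the $a_{j}$, and use the product $f=x_{i_{1}}^{a_{1}-1}\cdots x_{i_{l}}^{a_{l}-1}(x_{i_{1}}+\cdots+x_{i_{l}})$ of $e(Q_{k})$ elements of $\sqrt{Q_{k}}$; then $f\in Q_{k}$, $f\in J$ because $g\mid f$, yet no deleted-factor subproduct lies in $T\supseteq I$. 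Nothing in your proposal plays the role of this construction.

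Two further points. First, your upper-bound sketch stops exactly where the work is: after extracting one factor $f_{j}\in\sqrt{Q_{k}}$ you defer the "bookkeeping." The paper's resolution is to argue by contradiction: if no $t$-fold subproduct lies in $I$, then for every index $i$ the subproduct $g_{i}=f/f_{i}$ misses some $Q_{\ell}$, and primaryness forces $f_{i}\in\sqrt{Q_{\ell}}\subseteq\sqrt{Q_{k}}$ for every $i$ simultaneously; hence every $g_{j}\in(\sqrt{Q_{k}})^{t}\subseteq Q_{k}$, and $t$-absorbingness of $J$ puts some $g_{j}$ in $J\cap Q_{k}=I$, a contradiction. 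Second, your route to $\omega(I)\ge\omega(J)$ is shakier than you suggest: inserting $h$ as an additional factor and excluding all length-$\omega(J)$ subproducts would prove $\omega(I)\ge\omega(J)+1$, which is false in general, and a high power of a single variable in $\sqrt{Q_{k}}$ need not avoid the other associated primes (e.g.\ radicals $(x),(y),(x,y)$), so multiplying by it can push a subproduct into $J$. The paper needs no multiplication: the same primary-avoidance argument shows the witnesses $h_{1},\ldots,h_{t}$ for $J$ not being $(t-1)$-absorbing all lie in $\sqrt{Q_{k}}$, so when $t=\omega(J)\ge e(Q_{k})$ their product is already in $Q_{k}$, hence in $I$, while the subproducts stay outside $J\supseteq I$.
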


\begin{proof}
Let $t=\max\{e(Q_{k}), \omega(\cap_{1\le i\le r, i\neq k}Q_{i})\}$. We will first  show that $I$ is $t$-absorbing. If not, then there are $f_{1},\ldots, f_{t+1} \in R$ such that $f=\prod_{j=1}^{t+1}f_{j} \in I$ but $g_{j} :=f/f_{j}\not\in I$ for each $j \in\{1,\ldots, t+1\}$. Hence given any $i \in\{1,\ldots, t+1\}$, there exists $\ell \in \{1,\ldots, r\}$ such that $g_{i} \not\in Q_{\ell}$, and since $f_{i}g_{i}=f \in I\subseteq Q_{\ell}$,  we must have $f_{i}\in \sqrt{Q_{\ell}}\subseteq \sqrt{Q_{k}}$. 
Therefore, $g_{j}\in (\sqrt{Q_{k}})^{t}\subseteq (\sqrt{Q_{k}})^{e(Q_{k})}\subseteq Q_{k}$ for all $j \in\{1,\ldots, t+1\}$. On the other hand, $\cap_{1\le i\le r, i\neq k}Q_{i}$ is $t$-absorbing and $f\in \cap_{1\le i\le r, i\neq k}Q_{i}$, so that we conclude $g_{j} \in  \cap_{1\le i\le r, i\neq k}Q_{i}$ for some $j \in\{1,\ldots, t+1\}$ and thereby $g_{j}\in I$, a contradiction. Thus $\omega(I) \leq t$. Next, we show that $\omega(I)\ge t$; that is, $I$ is not $(t-1)$-absorbing. 
We now consider two cases.\\
\\
Case $1$:  $t = \omega( \cap_{1 \leq i \leq r, i \neq k} Q_{i})$. Since $\cap_{1\le i\le r, i\neq k}Q_{i}$ is not $(t-1)$-absorbing, there are $h_{1},\ldots, h_{t} \in R$ such that $h=\prod_{i=1}^{t}h_{i} \in \cap_{1\le i\le r, i\neq k}Q_{i}$ and $\ell_{j} := h/h_{j} \not\in \cap_{1\le i\le r, i\neq k}Q_{i}$ for each $j\in\{1,\ldots, t\}$. By an argument similar to the first paragraph of this proof, $h_{i}\in \sqrt{Q_{k}}$ for each $i\in \{1,\ldots, t\}$, and so $h \in Q_{k}$. Hence $h \in I$ and $\ell_{j}\not\in I$ for each $j\in \{1,\ldots, t\}$, so that $I$ is not $(t-1)$-absorbing.\\
\\
Case $2$: $t = e(Q_{k})$. Consider the standard decomposition of $I$, and choose an irreducible component $T$ of $I$ such that $e(T)=e(Q_{k})$ and $\sqrt{T}=\sqrt{Q_{k}}$. Since we obtained the canonical primary decomposition $I=\cap_{i=1}^{r} Q_{i}$ from the standard decomposition, we can choose a monomial $g\in (\cap_{1\le i\le r, i\neq k}Q_{i})\setminus T$ by Lemma \ref{support}(vi). Now $T=(x_{i_{1}}^{a_{1}},\cdot\cdot\cdot, x_{i_{l}}^{a_{l}})$ for some $a_{j} \in \mathbb{N}$ and $1\le i_{1}<\cdot\cdot\cdot<i_{l}\le n$. Note that we may assume that $g=\prod_{j=1}^{l}x_{i_{j}}^{c_{j}}$ for some $c_{j}\in\mathbb{N}_{0}$ such that $c_{j}<a_{j}$ for each $j\in\{1,\cdot\cdot\cdot, l\}$. Set \[f :=x_{i_{1}}^{a_{1}-1}\cdot\cdot\cdot x_{i_{l}}^{a_{l}-1}(x_{i_{1}}+\cdot\cdot\cdot+x_{i_{l}}).\] Then $f$ is a product of $e(T)$ elements of $\sqrt{T}$ by Lemma \ref{powermonomial}, and so $f \in (\sqrt{T})^{e(T)}= (\sqrt{Q_{k}})^{e(Q_{k})}\subseteq Q_{k}$. Since $g\mid f$ it also follows that $f \in \cap_{1\le i\le r, i\neq k}Q_{i}$. Hence $f \in I$.
However, given $j \in\{1,\cdot\cdot\cdot, l\}$, $\cfrac{f}{x_{i_{j}}}\not\in T$. Indeed, $x_{i_{1}}^{a_{1}-1}\cdot\cdot\cdot x_{i_{l}}^{a_{l}-1}\in \supp(\cfrac{f}{x_{i_{j}}})\setminus T$ by Lemma \ref{support}.(i), and $\cfrac{f}{x_{i_{j}}}\not\in T$ by Lemma \ref{support}.(vi). Similarly $x_{i_{1}}^{a_{1}-1}\cdot\cdot\cdot x_{i_{l}}^{a_{l}-1}=\cfrac{f}{x_{i_{1}}+\cdot\cdot\cdot+x_{i_{l}}}\not\in T$. Therefore $I$ is not $(e(Q_{k})-1)$-absorbing, and $\omega(I)\ge e(Q_{k})=t$. Hence we have shown that $\omega(I)=\max\{e(Q_{k}), \omega(\cap_{1\le i\le r, i\neq k}Q_{i})\}$. The proof of $\omega^{\bullet}(I)=\max\{e(Q_{k}), \omega^{\bullet}(\cap_{1\le i\le r, i\neq k}Q_{i})\}$ can be obtained in a similar manner, and is omitted.
\end{proof}

 The following corollary is immediate.
 
\begin{coro}
\label{refined}
Let $R=k[x_{1},\cdot\cdot\cdot, x_{n}]$ and $I$ be a monomial ideal of $R$ with standard decomposition $I=\bigcap_{i=1}^{r}T_{i}$. Then $\omega(I)=\omega^{\bullet}(I)=\max_{1\le i\le r}\{e(T_{i})\}$ if Ass$(R/I)$ is totally ordered under set inclusion. 
\end{coro}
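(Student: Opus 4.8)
The plan is to deduce this directly from Lemma \ref{l} by passing to the canonical primary decomposition and inducting on the number of distinct associated primes. Let $I=\bigcap_{i=1}^{s}Q_i$ be the canonical primary decomposition, where $\{S_1,\ldots,S_s\}$ is the partition of $\{1,\ldots,r\}$ with $Q_i=\bigcap_{\ell\in S_i}T_\ell$; since this decomposition is irredundant with pairwise distinct radicals, $\Ass(R/I)=\{\sqrt{Q_1},\ldots,\sqrt{Q_s}\}$, so $s=|\Ass(R/I)|$. By Lemma \ref{small}, $e(Q_i)=\max_{\ell\in S_i}e(T_\ell)$ for each $i$, whence $\max_{1\le i\le s}e(Q_i)=\max_{1\le\ell\le r}e(T_\ell)$; it therefore suffices to prove $\omega(I)=\omega^\bullet(I)=\max_{1\le i\le s}e(Q_i)$.

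I would argue by induction on $s$. If $s=1$, then $I=Q_1$ is primary and $\omega(I)=\omega^\bullet(I)=e(I)=e(Q_1)$ by Lemma \ref{fund}. Suppose $s>1$. Because $\Ass(R/I)$ is totally ordered under inclusion, after relabeling the $Q_i$ we may assume $\sqrt{Q_1}\subsetneq\cdots\subsetneq\sqrt{Q_s}$; in particular $\sqrt{Q_i}\subseteq\sqrt{Q_s}$ for every $i$, so Lemma \ref{l} applies with $k=s$, giving
\[ \omega(I)=\max\{\,e(Q_s),\,\omega(Q_1\cap\cdots\cap Q_{s-1})\,\},\qquad \omega^\bullet(I)=\max\{\,e(Q_s),\,\omega^\bullet(Q_1\cap\cdots\cap Q_{s-1})\,\}. \]
Now $J:=Q_1\cap\cdots\cap Q_{s-1}=\bigcap_{\ell\in S_1\cup\cdots\cup S_{s-1}}T_\ell$ is a monomial ideal whose standard decomposition consists exactly of the $T_\ell$ with $\ell\in S_1\cup\cdots\cup S_{s-1}$ (by the uniqueness in Lemma \ref{support}(iii), irredundancy being inherited from that of the standard decomposition of $I$), and $\Ass(R/J)=\{\sqrt{Q_1},\ldots,\sqrt{Q_{s-1}}\}$ is a subchain of $\Ass(R/I)$, hence totally ordered. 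By the induction hypothesis $\omega(J)=\omega^\bullet(J)=\max_{1\le i\le s-1}e(Q_i)$, and substituting into the displayed equalities yields $\omega(I)=\omega^\bullet(I)=\max\{e(Q_s),\max_{1\le i\le s-1}e(Q_i)\}=\max_{1\le i\le s}e(Q_i)$.

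Combining with the reduction of the first paragraph gives $\omega(I)=\omega^\bullet(I)=\max_{1\le\ell\le r}e(T_\ell)$, as claimed. I do not expect a genuine obstacle here: all of the substance is already contained in Lemma \ref{l}, and the only point needing a little care is the bookkeeping that deleting one primary component of the canonical decomposition leaves a monomial ideal whose standard decomposition is the expected one and whose set of associated primes is still a chain, so that the inductive hypothesis can be fed back in.
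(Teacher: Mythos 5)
Your proof is correct and follows exactly the route the paper intends: Corollary \ref{refined} is stated as an immediate consequence of Lemma \ref{l}, and your induction on the number of associated primes, peeling off the maximal prime via Lemma \ref{l} and reducing to the irreducible components via Lemma \ref{small} (Corollary \ref{primary}), is precisely the argument the paper leaves implicit. The bookkeeping you supply (irredundancy of the truncated decomposition and the chain condition passing to $Q_1\cap\cdots\cap Q_{s-1}$) is the right detail to check and is handled correctly.
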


In the next lemma, 
we give a characterization of when the upper bound of $\omega(I)$ from Theorem \ref{Wtheo} is sharp.
\begin{lemm}
\label{upper}
Let $I$ be a monomial ideal of $R=k[x_{1},\cdot\cdot\cdot, x_{n}]$ with an irredundant primary decomposition $I = Q_{1} \cap \cdots \cap Q_{r}$. 
Then $\omega(I)=\omega^{\bullet}(I)=\sum\limits_{i=1}^{r}e(Q_{i})$ if and only if $I$ has no embedded associated primes.
\end{lemm}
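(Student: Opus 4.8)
\textbf{Proof plan for Lemma \ref{upper}.}

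The plan is to prove both directions separately, relying crucially on Lemma \ref{l} and Corollary \ref{primary}. For the forward direction, I would argue by contrapositive: suppose $I$ has an embedded prime. Then among the $\sqrt{Q_i}$ there exist indices $a \neq b$ with $\sqrt{Q_a} \subsetneq \sqrt{Q_b}$. I want to show $\omega(I) < \sum_{i=1}^r e(Q_i)$ (the inequality $\le$ always holds by Theorem \ref{Wtheo}, so strictness is the point). The idea is that when one radical is properly contained in another, the contribution of the smaller-radical component is "absorbed" by the larger one, so the sum overcounts. To make this precise I would try to reduce to the situation of Lemma \ref{l}: reorder so that $\sqrt{Q_r}$ is maximal among $\{\sqrt{Q_i}\}$; this alone need not give a single $k$ dominating all others, so I would instead group the primary components by the poset structure of their radicals. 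Concretely, pick a maximal element $P = \sqrt{Q_k}$ of $\{\sqrt{Q_i} : i\}$ such that some other $\sqrt{Q_j} \subsetneq P$ (such a $k$ exists because an embedded prime exists). The subtlety is that Lemma \ref{l} requires $\sqrt{Q_i} \subseteq \sqrt{Q_k}$ for \emph{all} $i$, which may fail. One fix: localize or pass to the monomial subring in the variables of $P$, or alternatively induct on $r$ after splitting off the components whose radicals are comparable to $P$ versus those that are not. I expect this bookkeeping — getting a clean reduction so that Lemma \ref{l} actually applies — to be the main obstacle.

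For the cleaner argument in the forward direction, here is the route I would actually pursue. Assume $\sqrt{Q_a} \subsetneq \sqrt{Q_b}$. Write $J = \bigcap_{i \neq b} Q_i$, so $I = J \cap Q_b$ and $\omega(I) \le \omega(J) + e(Q_b)$ by Theorem \ref{Wtheo} and Lemma \ref{fund}. Now I claim $\omega(J) < \sum_{i \neq b} e(Q_i)$, which by induction on $r$ would finish things provided the base-case and the inductive reduction are consistent; but $J$ might have \emph{no} embedded primes even though $I$ does (the embedding was created by $Q_b$), so straightforward induction fails. Instead I would exploit Lemma \ref{l} directly in the favorable case and bootstrap: among all $\sqrt{Q_i}$ choose a maximal one, say $\sqrt{Q_k}$; if every $\sqrt{Q_i} \subseteq \sqrt{Q_k}$ then Lemma \ref{l} gives $\omega(I) = \max\{e(Q_k), \omega(\bigcap_{i\neq k} Q_i)\} \le \max\{e(Q_k), \sum_{i \neq k} e(Q_i)\}$, and since there is some $j \neq k$ with $\sqrt{Q_j} \subsetneq \sqrt{Q_k}$ one shows $e(Q_k) \le \sum_{i\neq k} e(Q_i)$ is \emph{not} forced but the point is $\sum_{i\neq k}e(Q_i) \ge e(Q_j) + (\text{something}) $, giving $\omega(I) \le \sum_{i \neq k} e(Q_i) < \sum_{i=1}^r e(Q_i)$ as long as $e(Q_k) \ge 1$, which holds since $Q_k$ is a proper primary ideal. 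If not every radical lies in $\sqrt{Q_k}$, partition $\{1,\dots,r\}$ by "radical $\subseteq \sqrt{Q_k}$ or not," apply Lemma \ref{l} to peel off the $\sqrt{Q_k}$-block, and recurse on the complement, tracking that the total drop is strictly positive.

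For the converse, suppose $I$ has no embedded associated primes; then $\Ass(R/I) = \Min(R/I) = \{\sqrt{Q_1}, \dots, \sqrt{Q_r}\}$ consists of pairwise incomparable primes, and moreover the irredundant primary decomposition coincides (up to grouping) with the canonical one and even with the standard decomposition grouped by radical. I want $\omega(I) = \sum e(Q_i)$; the $\le$ is automatic, so I must produce, for $N := \sum_{i=1}^r e(Q_i) - 1$, a product of $N$ elements lying in $I$ witnessing that $I$ is not $(N-1)$-absorbing. The construction mimics Case 2 of Lemma \ref{l}: for each $i$ choose an irreducible component $T_i$ appearing in the standard decomposition of $Q_i$ with $e(T_i) = e(Q_i)$, say $T_i = (x_{j}^{a_{j,i}} : j \in S_i)$ where the variable-index sets $S_i$ are pairwise incomparable (by the no-embedded-primes hypothesis). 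Because the $\sqrt{T_i}$ are incomparable, for each $i$ I can pick a variable $x_{p_i}$ with $p_i \in S_i \setminus \bigcup_{\ell \neq i} S_\ell$... — actually incomparability only guarantees $S_i \not\subseteq S_\ell$, so I would more carefully choose, for each ordered pair $(i,\ell)$, a variable in $S_i \setminus S_\ell$, and build a monomial $f = \prod_i m_i$ where $m_i = \prod_{j \in S_i} x_j^{a_{j,i}-1} \cdot (\text{a linear form in the }x_j, j\in S_i)$, totaling $\deg f$ in the right way, then verify: (a) $f \in I$, because $f$ is divisible by a suitable product witnessing membership in each $Q_i$ via $f \in (\sqrt{T_i})^{e(T_i)} \subseteq T_i \subseteq Q_i$; and (b) removing any one of the $N$ factors of $f$ yields a monomial-support element escaping some $T_i$, hence escaping $Q_i$, hence escaping $I$, by Lemma \ref{support}(i) and (vi) exactly as in Case 2 above. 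The delicate point is arranging the $N-1$ "deletion" checks simultaneously across all components; I would handle it by making the factorization of $f$ into $N$ irreducibles explicit (the $a_{j,i}-1$ copies of each $x_j$ from each $T_i$ plus one linear form per component, minus the overlaps, summing to $N$) and checking that deleting any single irreducible factor destroys membership in the corresponding $T_i$. I expect reconciling the overlaps among the $S_i$ — so that the degree count comes out to exactly $N$ and not less — to require the incomparability hypothesis in an essential way, and this is where the "only if" genuinely uses "no embedded primes."
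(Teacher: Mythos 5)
The main gap is in the hard direction, ``no embedded primes $\Rightarrow \omega(I)=\sum_{i=1}^{r}e(Q_{i})$''. Your witness $f=\prod_i m_i$ with $m_i=\prod_{j\in S_i}x_j^{a_{j,i}-1}\cdot(\text{a linear form in the }x_j,\ j\in S_i)$ uses \emph{plain variables} as most of its factors, and your deletion check (b) genuinely fails as soon as two incomparable minimal primes share a variable. Concretely, take $R=k[x,y,z]$ and $I=(x^{2},y)\cap(x^{2},z)$, so $e(Q_1)=e(Q_2)=2$ and the lemma asserts $\omega(I)=4$. Your construction gives $f=x(x+y)\cdot x(x+z)$, a product of four factors; but deleting one factor $x$ leaves $x(x+y)(x+z)=x^{3}+x^{2}y+x^{2}z+xyz$, every monomial of whose support lies in $I=(x^{2},yz)$, so the quotient is \emph{in} $I$ and $f$ does not witness failure of $3$-absorbency. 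The support argument ``as in Case 2 of Lemma \ref{l}'' does not transfer, because there the other components were handled by a monomial $g$ dividing $f$, whereas here non-membership in $Q_1$ must survive multiplication by the factors coming from the other components. The paper's fix is exactly the idea you are missing: replace the variable factors by $f_{i,j}=x_{i_j}+\sum_{t\neq j}x_{i_t}^{2}$ (and keep one linear form $\sum_l x_{i_l}$ per component). Every factor contributed by the $i$-th component then has support involving \emph{all} variables of $P_i$, so by incomparability it lies in no other minimal prime; hence $f/f_1\notin P_1$, while $f_1/f_{1,j}\notin T_1\supseteq Q_1$ by the support argument, and primariness of $Q_1$ yields $f/f_{1,j}\notin Q_1\supseteq I$. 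Your alternative of choosing, for each pair $(i,\ell)$, a variable in $S_i\setminus S_\ell$ does not repair the plain-variable construction, since the offending factors are still single variables that may lie in other minimal primes. (Minor slip in the same direction: to rule out $(\sum e(Q_i)-1)$-absorbency you need a product of $\sum_i e(Q_i)$ factors, not $N=\sum_i e(Q_i)-1$ as you set up.)

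For the forward direction your contrapositive plan can be made to work, but it is far more involved than necessary, and the step ``$\omega(I)\le\sum_{i\neq k}e(Q_i)$ as long as $e(Q_k)\ge 1$'' is garbled: from Lemma \ref{l} you get $\omega(I)\le\max\{e(Q_k),\sum_{i\neq k}e(Q_i)\}$, and strictness against $\sum_i e(Q_i)$ follows simply because both entries of the max are at least $1$ (as $r\ge 2$), with no need to compare $e(Q_k)$ to the rest. The paper's argument is shorter and avoids your partition-and-recurse bookkeeping entirely: if $P_1\subsetneq P_2$, then $\omega(Q_1\cap Q_2)=\max\{e(Q_1),e(Q_2)\}$ by Corollary \ref{refined}, and subadditivity (Theorem \ref{Wtheo}) gives $\omega(I)\le\max\{e(Q_1),e(Q_2)\}+\sum_{i\neq 1,2}e(Q_i)<\sum_{i=1}^{r}e(Q_i)$. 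Finally, the statement also involves $\omega^{\bullet}(I)$, which you never address; it comes for free from $\omega(I)\le\omega^{\bullet}(I)\le\sum_i e(Q_i)$, but it should be said.
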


\begin{proof}
Set $P_{i}=\sqrt{Q_{i}}$ for each $i=1,\cdot\cdot\cdot, r$.\\
\\
$\Leftarrow$: Assume that $P_{1},\cdot\cdot\cdot, P_{r}$ are incomparable prime ideals. The case when $r=1$ follows from Lemma \ref{fund}, so we may assume that $r\ge 2$. Since $\omega(I)\le\omega^{\bullet}(I)\le\sum\limits_{i=1}^{r}e(Q_{i})$ by Theorem \ref{Wtheo}, it suffices to show that $I$ is not $( \sum\limits_{i=1}^{r} e(Q_{i})-1)$-absorbing. Now given $i \in \{1,\cdot\cdot\cdot, r\}$, choose $T_{i}$ to be an irreducible component of $I$ with $\sqrt{T_{i}} = P_{i}$ and $e(T_{i}) = e(Q_{i})$.  Write $T_{i} = (x_{i_{1}}^{a_{1}},\cdot\cdot\cdot, x_{i_{s_{i}}}^{a_{s_{i}}})$
 with $1\le i_{1}<\cdot\cdot\cdot< i_{s_{i}}\le n$ and $a_{1},\cdot\cdot\cdot, a_{s_{i}}\in \mathbb{N}$. For $i \in \{1, \ldots, r\}$ and $j \in \{1, \ldots, s_{i}\}$, set \[f_{i,j}=x_{i_{j}}+\sum\limits_{t\neq j}x_{i_{t}}^{2} \text{ and } f_{i}= \displaystyle \Big(\sum\limits_{l=1}^{s_{i}}x_{i_{l}}\Big)\Big(\prod_{j=1}^{s_{i}}f_{i,j}^{a_{j}-1}\Big).\] It follows that $f_{i}\in P_{i}^{e(T_{i})}=(\sqrt{Q_{i}})^{e(Q_{i})}\subseteq Q_{i}$.
Thus $f :=\prod_{i=1}^{r}f_{i} \in I$, and $f$ is a product of $\sum\limits_{i=1}^{r}e(Q_{i})$ elements of $R$. We wish to show that $\cfrac{f}{\sum\limits_{l=1}^{s_{i}}x_{i_{l}}}\not\in I$ and $\cfrac{f}{f_{i,j}}\not\in I$ for each $i\in\{1,\cdot\cdot\cdot, r\} \text{ and } j\in\{1,\cdot\cdot\cdot, s_{i}\}$. 
 Without loss of generality, we let $i = 1$. Note that $\cfrac{f_{1}}{f_{1,j}}\not\in T_{1}$,  since $\prod_{t=1}^{s_{1}}x_{1_{t}}^{a_{t}-1}\in \supp\Big(\cfrac{f_{1}}{f_{1,j}}\Big)\setminus T_{1}$. On the other hand, $\sum\limits_{l=1}^{s_{i}}x_{i_{l}}\not\in P_{1}$ and $f_{i,l}\not\in P_{1}$ for each $i\neq 1$ and $l\in\{1,\cdot\cdot\cdot, s_{i}\}$. Therefore $f_{i}\not\in P_{1}$ for each $i\neq 1$, and $\cfrac{f}{f_{1}}=\prod_{i=2}^{r}f_{i}\not\in P_{1}$. Hence $\cfrac{f}{f_{1,j}}=(\cfrac{f}{f_{1}})(\cfrac{f_{1}}{f_{1,j}})\not\in Q_{1}$. The proof that $\cfrac{f}{\sum\limits_{l=1}^{s_{1}}x_{1_{l}}}\not\in Q_{1}$ follows similarly. Hence we have $\omega(I)=\sum\limits_{i=1}^{r}e(Q_{i})$. \\
\\
$\Rightarrow$: We prove the contrapositive; assume that $P_{1},\cdot\cdot\cdot, P_{r}$ are not incomparable prime ideals. Then without loss of generality we may assume that $P_{1}\subsetneq P_{2}$, and we have $\omega(Q_{1}\cap Q_{2})=\max\{e(Q_{1}), e(Q_{2})\}$ by Corollary \ref{refined}. Therefore by Theorem \ref{Wtheo} we have
\begin{align*}
\omega(I)&=\omega \Big(Q_{1}\cap Q_{2}\cap(\bigcap_{i\neq 1,2} Q_{i})\Big)\\
&\le \omega \Big(Q_{1}\cap Q_{2} \Big)+\omega \Big(\bigcap_{i\neq 1,2} Q_{i} \Big)\\
&=\max\{e(Q_{1}), e(Q_{2})\}+\omega \Big(\bigcap_{i\neq 1,2} Q_{i}\Big)\\
&\le\max\{e(Q_{1}), e(Q_{2})\}+\sum_{i\neq 1,2} e(Q_{i})\\
&<\sum_{i=1}^{r}e(Q_{i}).
\end{align*}
\end{proof}
Lemma \ref{l} and Lemma \ref{upper} yield the following corollary.

\begin{coro}
\label{n-1}
Let $I$ is a monomial ideal of $R=k[x_{1},\cdot\cdot\cdot, x_{n}]$ with  dim$(R/I)=1$. Let $I=\cap_{i=1}^{r} Q_{i}$ be the canonical primary decomposition of $I$. Then 
 \[ \omega(I) = \omega^{\bullet}(I)=\left\{
\begin{array}{ll}
      \max\{e(Q_{k}),\sum\limits_{i\neq k}e(Q_{i})\}  & \text{ if } \sqrt{Q_{k}}= \mathfrak{m} \text{ for some } k\in\{1,\cdot\cdot\cdot, r\}. \\
       \sum\limits_{i=1}^{r}e(Q_{i})  & \text{ otherwise }.\\
   \end{array} 
\right. \]
\end{coro}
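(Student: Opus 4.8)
The whole argument will be a reduction, via Lemma~\ref{l} and Lemma~\ref{upper}, to a description of $\Ass(R/I)$ forced by the hypothesis $\dim(R/I)=1$. The first step is to observe that, since $I$ is a monomial ideal, all of its minimal primes are monomial primes and hence contained in $\mathfrak m$. If $\mathfrak m$ were itself a minimal prime of $I$, it would be the \emph{only} minimal prime, making $I$ an $\mathfrak m$-primary ideal and forcing $\dim(R/I)=0$, contrary to hypothesis. Therefore every minimal prime $P$ of $I$ has $\height P=n-1$, i.e. $P$ is generated by $n-1$ of the variables; in particular distinct minimal primes are incomparable. Any embedded associated prime of $I$ strictly contains a minimal prime, and $\mathfrak m$ is the only monomial prime that strictly contains an $(n-1)$-generated monomial prime. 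Consequently, in the canonical primary decomposition $I=\bigcap_{i=1}^r Q_i$, the radicals $\sqrt{Q_i}$ are pairwise incomparable with the single possible exception that exactly one of them, say $\sqrt{Q_k}$, equals $\mathfrak m$. (If $r=1$ then necessarily $\sqrt{Q_1}\ne\mathfrak m$, so $I=Q_1$ is primary and the formula reduces to $\omega(I)=\omega^{\bullet}(I)=e(Q_1)$ by Lemma~\ref{fund}; so we may assume $r\ge 2$.)

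Now I would split into the two cases of the statement. If no $\sqrt{Q_i}$ equals $\mathfrak m$, then $I$ has no embedded associated primes and Lemma~\ref{upper} immediately gives $\omega(I)=\omega^{\bullet}(I)=\sum_{i=1}^r e(Q_i)$. If instead $\sqrt{Q_k}=\mathfrak m$ for the unique such $k$, then $\sqrt{Q_i}\subseteq\mathfrak m=\sqrt{Q_k}$ for every $i$, so Lemma~\ref{l} applies and yields $\omega(I)=\max\{e(Q_k),\,\omega(\bigcap_{i\ne k}Q_i)\}$, and likewise for $\omega^{\bullet}$. The monomial ideal $\bigcap_{i\ne k}Q_i$ has $\bigcap_{i\ne k}Q_i$ itself as its canonical primary decomposition, and its radicals $\{\sqrt{Q_i}\}_{i\ne k}$ are pairwise incomparable, so it has no embedded primes; hence Lemma~\ref{upper} gives $\omega(\bigcap_{i\ne k}Q_i)=\omega^{\bullet}(\bigcap_{i\ne k}Q_i)=\sum_{i\ne k}e(Q_i)$. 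Substituting back gives $\omega(I)=\omega^{\bullet}(I)=\max\{e(Q_k),\sum_{i\ne k}e(Q_i)\}$, as claimed.

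I expect no serious obstacle here: the only real content is the structural observation in the first paragraph, namely that $\dim(R/I)=1$ forces every minimal prime of $I$ to be $(n-1)$-generated (hence the minimal primes incomparable) and forces the unique possible embedded associated prime to be $\mathfrak m$. Granting that, Lemma~\ref{l} strips off the $\mathfrak m$-primary component when it is present, Lemma~\ref{upper} evaluates the rest, and both lemmas deliver $\omega$ and $\omega^{\bullet}$ in tandem, so the equality $\omega(I)=\omega^{\bullet}(I)$ comes along automatically.
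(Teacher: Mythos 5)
Your proposal is correct and follows the paper's intended route: the paper derives this corollary directly from Lemma~\ref{l} and Lemma~\ref{upper}, and your argument just fills in the structural observation that $\dim(R/I)=1$ forces all minimal primes to be height $n-1$ (hence pairwise incomparable) with $\mathfrak m$ the only possible embedded prime. The case split and the application of the two lemmas match what the paper leaves implicit.
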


\begin{coro}
\label{realprinmo}
Let $f$ be a monomial of $R$. Then $\omega^{\bullet}(fR)=deg(f)$. In particular, $\omega(fR)=\omega^{\bullet}(fR)$.
\end{coro}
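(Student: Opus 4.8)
The plan is to prove that $\omega^{\bullet}(fR) = \deg(f)$ for a monomial $f \in R$, since Corollary \ref{prinmo} already gives $\omega(fR) = \deg(f)$ and the ``in particular'' clause then follows immediately. Because $\omega(fR) \le \omega^{\bullet}(fR)$ always holds, it suffices to establish the upper bound $\omega^{\bullet}(fR) \le \deg(f)$.

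First I would reduce to the case of a squarefree monomial via Lemma \ref{irreducible}. Write $f = p_1 \cdots p_d$ as a product of (not necessarily distinct) irreducible elements $x_{i}$ of the UFD $R$, where $d = \deg(f)$. The strongly-$n$-absorbing analogue of Lemma \ref{irreducible} — namely that $I$ is strongly $n$-absorbing iff $pI$ is strongly $(n+1)$-absorbing for $p$ irreducible — can be proved by the same argument as in Lemma \ref{irreducible}, working with ideals $I_1, \ldots, I_{n+2}$ in place of elements and using that in a UFD, if $p \mid I_1 \cdots I_{n+2}$ then... actually here one must be a little careful, so an alternative is to invoke Theorem \ref{Wtheo} directly: $fR = (p_1 \cdots p_d)R = \bigcap$ is not quite a primary decomposition in general, but we can instead just compute $e(fR)$. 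Since $\sqrt{fR} = (p_1 \cdots p_d)_{\text{rad}} = $ the product of the distinct primes $(q_1) \cdots (q_s)$ where $q_1, \ldots, q_s$ are the distinct irreducible divisors of $f$, and $fR$ is... hmm, $fR$ need not be primary.

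So the cleanest route is: let $f = x_{j_1}^{b_1} \cdots x_{j_s}^{b_s}$ with the $x_{j_\ell}$ distinct and each $b_\ell \ge 1$. Then $fR = (x_{j_1}^{b_1}) \cap \cdots \cap (x_{j_s}^{b_s})$ by repeated application of Lemma \ref{support}(v) (coprimality of the factors $x_{j_\ell}^{b_\ell}$), and this is the standard/irredundant primary decomposition of $fR$, with $\sqrt{(x_{j_\ell}^{b_\ell})} = (x_{j_\ell})$. The associated primes $(x_{j_1}), \ldots, (x_{j_s})$ are pairwise incomparable, so $fR$ has no embedded associated primes; hence Lemma \ref{upper} applies and gives $\omega(fR) = \omega^{\bullet}(fR) = \sum_{\ell=1}^{s} e\big((x_{j_\ell}^{b_\ell})\big)$. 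By Lemma \ref{powermonomial} (with $m = 1$), $e\big((x_{j_\ell}^{b_\ell})\big) = b_\ell - 1 + 1 = b_\ell$, so the sum is $b_1 + \cdots + b_s = \deg(f)$.

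The main obstacle — really the only thing requiring care — is the degenerate case $f = 1$ (a unit), where $fR = R$ and there is no nontrivial primary decomposition; there one simply notes $\omega^{\bullet}(R) = 0 = \deg(1)$ by the conventions fixed in the introduction. One should also double-check that Lemma \ref{upper} is being invoked with an \emph{irredundant} primary decomposition, which the standard decomposition of $fR$ is, by Lemma \ref{support}(iii)--(iv). With these points dispatched the proof is short, as the real content lies in Lemma \ref{upper} and Lemma \ref{powermonomial}.

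\begin{proof}
If $f$ is a unit, then $fR = R$ and $\omega^{\bullet}(fR) = 0 = \deg(f)$ by convention, so assume $\deg(f) \ge 1$ and write $f = x_{j_{1}}^{b_{1}} \cdots x_{j_{s}}^{b_{s}}$ with $j_{1} < \cdots < j_{s}$ and each $b_{\ell} \in \mathbb{N}$. Since the monomials $x_{j_{1}}^{b_{1}}, \ldots, x_{j_{s}}^{b_{s}}$ are pairwise coprime, repeated application of Lemma \ref{support}(v) gives $fR = (x_{j_{1}}^{b_{1}}) \cap \cdots \cap (x_{j_{s}}^{b_{s}})$, and this is the standard decomposition of $fR$ by Lemma \ref{support}(iii)--(iv); in particular it is an irredundant primary decomposition whose associated primes $(x_{j_{1}}), \ldots, (x_{j_{s}})$ are pairwise incomparable, so $fR$ has no embedded associated primes. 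By Lemma \ref{upper} and Lemma \ref{powermonomial},
\begin{align*}
\omega(fR) = \omega^{\bullet}(fR) = \sum_{\ell=1}^{s} e\big((x_{j_{\ell}}^{b_{\ell}})\big) = \sum_{\ell=1}^{s} b_{\ell} = \deg(f).
\end{align*}
\end{proof}
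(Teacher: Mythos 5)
Your proof is correct and follows essentially the same route as the paper: decompose $fR$ as the intersection of the principal ideals generated by the pairwise coprime prime powers dividing $f$, observe the associated primes are incomparable, and then apply Lemma \ref{upper} together with Lemma \ref{powermonomial} to sum the Noether exponents. The only difference is your explicit handling of the unit case and the irredundancy check, which the paper leaves implicit.
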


\begin{proof}
Let $f=\prod_{k=1}^{r}x_{i_{k}}^{a_{k}}$ for some $a_{1},\ldots, a_{r} \in \mathbb{N}$ and $1 \leq i_{1} < i_{2} < \cdots < i_{r} \leq n$. Then $fR=x_{i_{1}}^{a_{1}}R\cap\cdot\cdot\cdot \cap x_{i_{r}}^{a_{r}}R$, and by Lemma \ref{powermonomial} and Lemma \ref{upper} we have $\omega(fR)=\omega^{\bullet}(fR)=\sum_{i=1}^{r}e(x_{i_{k}}^{a_{k}}R)=\sum_{i=1}^{r} a_{k}=$deg$(f)$. 
\end{proof}



Given a monomial ideal $I$ of $R=k[x,y,z]$, we can produce an algorithm that can compute $\omega(I)$. If $I$ is principal, then Corollary \ref{realprinmo} says that $\omega(I )$ is equal to the degree of a generator for $I$. Otherwise, $I=hJ$ for some monomial $h$ and a monomial ideal $J$ with dim$(R/J)\le1$. 
Now, $\omega(J)$ can be calculated explicitly using Corollary \ref{primary} or Corollary \ref{n-1} after obtaining a canonical primary decomposition of $I$, and we have $\omega(I)=deg(h)+\omega(J)$ by Corollary \ref{prinmo}. 

\begin{ex}
Let $R=k[x,y,z]$ and $I=(x^{3}y^{4}, x^{2}y^{5}, x^{4}y^{3}z^{2}, x^{5}y^{3}z, x^{2}y^{4}z^{2})$. Then
$I=x^{2}y^{3}J$ with canonical primary decomposition $J=(x^{2},y)\cap (y,z) \cap (x^3, y^2, z^2, xy)$. By Lemma \ref{powermonomial} and Corollary \ref{primary}, the standard decomposition $(x^3, y^2, z^2, xy)=(x, y^{2}, z^{2})\cap (x^{3}, y, z^{2})$ yields that $e((x^3, y^2, z^2, xy))=4$. Thus by Corollary \ref{n-1},
 \begin{align*}
 \omega(I)&=deg(x^{2}y^{3})+\omega(J)\\
 &=5+\max\{e((x^3, y^2, z^2, xy)), e((x^{2},y))+e((y,z))\}\\
 &=5+\max\{4, 2+1\}\\
 &=9.
 \end{align*}  
\end{ex}

Another interesting corollary of Lemma \ref{irreducible} and Lemma  \ref{l} is a formula of $\omega(I)$ and $\omega^{\bullet}(I)$ for monomial ideals of $R=k[x,y]$ where $k$ is a field and $x,y$ are  indeterminates over $k$. 

\begin{theorem}
\label{hoo}
Let $R = k[x,y]$ and $J$ a monomial ideal of $R$.  Write $J=(x^{a_{1}}y^{b_{1}},\cdot\cdot\cdot, x^{a_{r}}y^{b_{r}})$, where $\{a_{i} \}$ is strictly decreasing and $\{ b_{i} \}$ is strictly increasing. Then
 \[ \omega(J) =\omega^{\bullet}(J)= \left\{
\begin{array}{ll}
      a_{1}+b_{1}  & \text{ if } r=1. \\
      \max_{1\le i \le r-1}\{a_{i}+b_{i+1}\}-1  & \text{ if } r>1.\\
   \end{array} 
\right. \]
\end{theorem}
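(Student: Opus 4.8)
The plan is to handle the case $r=1$ immediately via Corollary~\ref{realprinmo} (or Corollary~\ref{prinmo}), since then $J = x^{a_1}y^{b_1}R$ is principal and $\omega(J) = \omega^{\bullet}(J) = a_1 + b_1$. So assume $r > 1$. The first step is to compute the standard decomposition of $J$. Since $J$ is a monomial ideal in two variables with generators $x^{a_i}y^{b_i}$ where the $a_i$ strictly decrease and the $b_i$ strictly increase, repeated application of Lemma~\ref{support}(v) gives $J = \bigcap_{i=1}^{r-1} (x^{a_i}, y^{b_{i+1}})$, together with the observation that the extreme ``pure'' pieces $(x^{a_1})$ and $(y^{b_r})$ do not appear because $b_1 \geq 1$ and $a_r \geq 1$ force those generators to already be divisible by something of the listed form — more precisely, one checks directly that a monomial $x^c y^d$ lies in $J$ iff $c \geq a_i$ and $d \geq b_i$ for some $i$, and that this is equivalent to lying in every $(x^{a_i}, y^{b_{i+1}})$ for $i = 1, \ldots, r-1$. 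This is the bookkeeping step and should be routine, though one must be a little careful that the decomposition is irredundant (which it is, precisely because the $a_i$ are strictly decreasing and $b_i$ strictly increasing).

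The second step is to identify the associated primes. Each component $(x^{a_i}, y^{b_{i+1}})$ has radical $\mathfrak{m} = (x,y)$, so in fact $J$ is $\mathfrak{m}$-primary, all associated primes coincide, and hence $\mathrm{Ass}(R/J)$ is trivially totally ordered. Therefore Corollary~\ref{refined} applies directly and gives
\[
\omega(J) = \omega^{\bullet}(J) = \max_{1 \le i \le r-1} \{ e\bigl( (x^{a_i}, y^{b_{i+1}}) \bigr) \}.
\]
By Lemma~\ref{powermonomial}, $e\bigl( (x^{a_i}, y^{b_{i+1}}) \bigr) = a_i + b_{i+1} - 2 + 1 = a_i + b_{i+1} - 1$. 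Substituting, $\omega(J) = \max_{1 \le i \le r-1}\{ a_i + b_{i+1} - 1\} = \max_{1 \le i \le r-1}\{a_i + b_{i+1}\} - 1$, which is exactly the claimed formula.

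The main obstacle, and the only place that genuinely needs care, is verifying the claimed standard decomposition — in particular confirming that it is irredundant and that no further components (such as pure powers of $x$ or $y$ alone) are needed. The cleanest way to do this is the membership criterion: show that for a monomial $x^c y^d$, membership in $J$ is equivalent to $(c,d) \geq (a_i, b_i)$ componentwise for some $i$, and separately that membership in $\bigcap_{i=1}^{r-1}(x^{a_i}, y^{b_{i+1}})$ is equivalent to: for each $i \in \{1,\dots,r-1\}$, $c \geq a_i$ or $d \geq b_{i+1}$. Using the monotonicity of the sequences (if $c < a_i$ then $c < a_j$ for all $j \le i$, and if $d < b_{i+1}$ then $d < b_j$ for all $j \ge i+1$) these two conditions are seen to coincide: the failure of the intersection condition at index $i$ pins $(c,d)$ strictly below $(a_j,b_j)$ for every $j$. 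Irredundancy follows since omitting the $i$-th component would wrongly admit $x^{a_i - 1} y^{b_{i+1}-1}$. Once this is in place, Corollary~\ref{refined} and Lemma~\ref{powermonomial} do the rest, and the statement about $\omega^{\bullet}$ comes along for free because Corollary~\ref{refined} already asserts $\omega = \omega^{\bullet}$ in this totally-ordered situation.
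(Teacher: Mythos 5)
Your argument is fine in the special case $a_{r}=b_{1}=0$, but it has a genuine gap in general: the theorem does not assume $J$ is $(x,y)$-primary, and your claimed decomposition $J=\bigcap_{i=1}^{r-1}(x^{a_{i}},y^{b_{i+1}})$ is simply false once $a_{r}>0$ or $b_{1}>0$. Take $J=(x^{2}y,\,xy^{2})$, so $r=2$, $a_{1}=2$, $b_{1}=1$, $a_{2}=1$, $b_{2}=2$. Your formula would give $J=(x^{2},y^{2})$, yet $x^{2}\in(x^{2},y^{2})\setminus J$. The monotonicity argument you sketch only proves one implication (failure of the intersection condition at some $i$ forces $x^{c}y^{d}\notin J$); the converse fails because membership in $J$ needs $c\ge a_{i}$ \emph{and} $d\ge b_{i}$ simultaneously for some $i$, and the conditions $c\ge a_{r}$, $d\ge b_{1}$ are not automatic unless $a_{r}=b_{1}=0$. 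The correct standard decomposition (cited in the paper from Miller--Sturmfels) is $J=x^{a_{r}}R\cap y^{b_{1}}R\cap(x^{a_{1}},y^{b_{2}})\cap\cdots\cap(x^{a_{r-1}},y^{b_{r}})$, with the pure components present exactly when $a_{r}\ge1$, resp.\ $b_{1}\ge1$. This also breaks your second step: in that case $\mathrm{Ass}(R/J)$ contains the incomparable primes $(x)$ and $(y)$, so it is not totally ordered and Corollary \ref{refined} does not apply. That your (incorrect) intermediate ideal happens to yield the number in the statement is an accident of the inequality $a_{r}+b_{1}<a_{r-1}+b_{r}-1$; the argument itself computes $\omega$ of the wrong ideal.

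To repair it along the paper's lines: in the primary case $a_{r}=b_{1}=0$ your computation via Corollary \ref{primary} (or \ref{refined}) and Lemma \ref{powermonomial} is exactly what the paper does; in the general case, since $\dim(R/J)\le1$, apply Corollary \ref{n-1} to the canonical primary decomposition obtained from the full standard decomposition above, giving $\omega(J)=\omega^{\bullet}(J)=\max\{\max_{1\le i\le r-1}\{a_{i}+b_{i+1}-1\},\,a_{r}+b_{1}\}$, and then observe this maximum equals $\max_{1\le i\le r-1}\{a_{i}+b_{i+1}\}-1$ because $a_{r-1}\ge a_{r}+1$ and $b_{r}\ge b_{1}+1$. (Alternatively one could factor $J=x^{a_{r}}y^{b_{1}}J'$ with $J'$ primary and use Corollary \ref{prinmo}, but note that route as stated only controls $\omega$, not $\omega^{\bullet}$, so the $\omega^{\bullet}$ claim would still need Corollary \ref{n-1} or an $\omega^{\bullet}$-analogue of Lemma \ref{irreducible}.)
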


\begin{proof}
The case when $r=1$ follows from Corollary \ref{realprinmo}. For $r > 1$, first observe the standard decomposition of $J$ is $J= x^{a_{r}}R\cap y^{b_{1}}R \cap (x^{a_{1}}, y^{b_{2}})\cap (x^{a_{2}}, y^{b_{3}}) \cap \cdot\cdot\cdot \cap (x^{a_{r-1}},y^{b_{r}})$ (\cite[Proposition 3.2]{Miller}). 
The case $b_{1}=a_{r}=0$ follows from Corollary \ref{primary}. Suppose that at least one of $a_{r}$ and $b_{1}$ is nonzero. Thus by Lemma \ref{powermonomial} and Corollary \ref{n-1},
\begin{align*}
\omega(J)=\omega^{\bullet}(J)&=\max\{e((x^{a_{1}}, y^{b_{2}})\cap (x^{a_{2}}, y^{b_{3}}) \cap \cdot\cdot\cdot \cap (x^{a_{r-1}},y^{b_{r}})), e(x^{a_{r}}R)+e(y^{b_{1}}R)\}\\
&=\max\{\max_{1\le i\le r-1}\{e((x^{a_{i}}, y^{b_{i+1}}))\}, a_{r}+b_{1}\}\\
&=\max\{\max_{1\le i\le r-1}\{a_{i}+b_{i+1}-1\}, a_{r}+b_{1}\}\\
&=\max_{1\le i\le r-1}\{a_{i}+b_{i+1}\}-1.
\end{align*}
\end{proof}

\begin{ex}
If $R=k[x,y]$ and $J=(x^{11}y^{4}, x^{8}y^{5}, x^{7}y^{9}, x^{4}y^{10}, x^{2}y^{16})$, then by Theorem \ref{hoo},\\
 $\omega(J)=\omega^{\bullet}(J)=\max\{11+5, 8+9, 7+10, 4+16\}-1=19$.
\end{ex}

\section{$\omega$-linear ideals}

Given an ideal $I$ of a ring $R$, we will say that $I$ is an \textit{$\omega$-linear ideal} if $\omega(I^{m})=m\omega(I)$ for each $m\in\mathbb{N}$. Perhaps the most common example of $\omega$-linear ideals can be found amongst those $P \in \Spec(R)$ which $P^{n}$ is $P$-primary for each $n \in \mathbb{N}$ (\cite[Theorem 3.1, Theorem 5.7]{Anderson}). For instance, \\
\\
1. $R$ is a Pr{\"u}fer domain and $P^{2}\neq P$.\\
2. $R$ is a Noetherian ring and $P$ is a maximal ideal that contains a nonzerodivisor.\\
3. $R=k[x_{1},\cdot\cdot\cdot, x_{n}]$ and $P$ is a monomial ideal.\\
\\
In this section, we investigate the properties of $\omega$-linear ideals. Again, we will restrict our concern to monomial ideals of a polynomial ring $R=k[x_{1},\cdot\cdot\cdot, x_{n}]$ where $k$ is a field. 

We first consider a few useful inequalities regarding monomial ideals.


\begin{lemm}
\label{topi}
Let $I$ be a monomial ideal of $R = k[x_{1},\ldots,x_{n}]$. Then $\omega(I)\ge \max\{\deg(f)\mid f\in G(I)\}$.
\end{lemm}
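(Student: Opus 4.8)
The plan is to show that for each minimal generator $f \in G(I)$, writing $d := \deg(f)$, the ideal $I$ fails to be $(d-1)$-absorbing; maximizing over $f \in G(I)$ then yields the stated bound. If $I = R$ the inequality is trivial, so I would assume $I$ is a proper monomial ideal.

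Fix such an $f$ and write $f = x_{j_{1}}^{c_{1}}\cdots x_{j_{m}}^{c_{m}}$ with $c_{1} + \cdots + c_{m} = d$. The idea is to factor $f$ into $d$ (not necessarily distinct) variables: choose variables $y_{1},\ldots,y_{d}$ with $y_{1}\cdots y_{d} = f$, where exactly $c_{\ell}$ of the $y_{i}$ equal $x_{j_{\ell}}$. This exhibits $d$ elements of $R$ whose product lies in $I$.

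Next I would check that deleting any single factor leaves a monomial outside $I$: for every $i \in \{1,\ldots,d\}$ one has $y_{1}\cdots\widehat{y_{i}}\cdots y_{d} = f/y_{i}$, which is a monomial dividing $f$ with $\deg(f/y_{i}) = d-1 < d$, hence a proper divisor of $f$. Were $f/y_{i} \in I$, then $f$ would not be minimal with respect to divisibility among the monomials of $I$, contradicting $f \in G(I)$ (cf. Lemma \ref{support}(i),(ii)). Consequently the factorization $y_{1}\cdots y_{d} = f \in I$ witnesses that $I$ is not $(d-1)$-absorbing, whence $\omega(I) \ge d = \deg(f)$.

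Since $f \in G(I)$ was arbitrary, $\omega(I)\ge\max\{\deg(f)\mid f\in G(I)\}$. I do not anticipate a genuine obstacle here; the only mild points requiring care are the harmless reduction to $I$ proper and keeping track of the multiplicity with which each variable occurs in $f$, so that the $d$ deleted factors are individual variable-occurrences rather than the distinct variables appearing in $f$.
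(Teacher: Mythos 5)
Your proposal is correct and is essentially the paper's own argument: factor the minimal generator $f$ into its $\deg(f)$ variable-occurrences, note that deleting any one factor yields $f/x_{i_k}$, which lies outside $I$ by minimality of $f$ in $G(I)$, so $I$ is not $(\deg(f)-1)$-absorbing. Your extra care about multiplicities just makes explicit what the paper leaves implicit.
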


\begin{proof}
Let $f\in G(I)$. Then $f=\prod_{k=1}^{r}x_{i_{k}}^{a_{k}}$ for some $a_{1},\ldots, a_{r} \in \mathbb{N}$ and $1 \leq i_{1} < i_{2} < \cdots < i_{r} \leq n$. Since $f\in I$ but $\cfrac{f}{x_{i_{k}}}\not\in I$ for each $k\in\{1,\cdot\cdot\cdot, r\}$ by minimality of $G(I)$, we have that $I$ is not $($deg$(f)-1)$-absorbing. Hence $\omega(I)\ge$ deg$(f)$, and since $f$ was chosen arbitrarily, we have the desired conclusion.
\end{proof}

\begin{lemm}
\label{orderreversing}
Let $I\subseteq J$ be ideals of a ring $R$. If $\sqrt{I}=\sqrt{J}$, then $e(J)\le \omega(I)$. In particular, if $I$ and $J$ are both $P$-primary ideals of a prime ideal $P$ of $R$, then $\omega(J)\le \omega(I)$.
\end{lemm}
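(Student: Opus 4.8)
The plan is to deduce the inequality directly from Lemma \ref{fund}, using only the elementary fact that if two ideals have the same radical then all powers of that radical agree. First I would dispose of the degenerate case $\omega(I)=\infty$, in which $e(J)\le\omega(I)$ holds vacuously; so from now on set $n:=\omega(I)<\infty$. The key step is then a one-line chain of containments: by Lemma \ref{fund} we have $e(I)\le\omega(I)=n$, which by definition of the Noether exponent means $(\sqrt{I})^{n}\subseteq I$; since $I\subseteq J$ forces $\sqrt{I}\subseteq\sqrt{J}$ and the hypothesis gives the reverse inclusion, we get $\sqrt{I}=\sqrt{J}$ as ideals, whence $(\sqrt{J})^{n}=(\sqrt{I})^{n}\subseteq I\subseteq J$. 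This says exactly that $e(J)\le n=\omega(I)$, proving the first assertion.

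For the "in particular" part I would specialize: if $I$ and $J$ are both $P$-primary then $\sqrt{I}=\sqrt{J}=P$, so the first part applies and gives $e(J)\le\omega(I)$; on the other hand, because $J$ is primary, Lemma \ref{fund} also gives $\omega(J)=\omega^{\bullet}(J)=e(J)$. Chaining these yields $\omega(J)=e(J)\le\omega(I)$, as claimed.

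I do not anticipate a genuine obstacle here: the entire content is the inequality $e(I)\le\omega(I)$ supplied by Lemma \ref{fund}, combined with the insensitivity of $(\sqrt{I})^{n}$ to replacing $I$ by $J$. The only points deserving a word of care are to record the case $\omega(I)=\infty$ separately and to note explicitly that $\sqrt{I}=\sqrt{J}$ (not merely $\sqrt{I}\subseteq\sqrt{J}$) is what makes $(\sqrt{I})^{n}=(\sqrt{J})^{n}$ legitimate.
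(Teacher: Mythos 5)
Your proposal is correct and follows essentially the same route as the paper: the chain $(\sqrt{J})^{\omega(I)}=(\sqrt{I})^{\omega(I)}\subseteq(\sqrt{I})^{e(I)}\subseteq I\subseteq J$ via Lemma \ref{fund}, then $\omega(J)=e(J)$ for primary ideals to conclude. Your explicit handling of the $\omega(I)=\infty$ case is a harmless extra precaution the paper leaves implicit.
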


\begin{proof}
Since $\sqrt{I}=\sqrt{J}$, $(\sqrt{J})^{\omega(I)}\subseteq(\sqrt{I})^{e(I)}\subseteq I\subseteq J$ by Lemma \ref{fund} and $e(J)\le \omega(I)$. The second statement follows from that $e = \omega$ for primary ideals.
\end{proof} 

\begin{lemm}
\label{idealop}
Let $P$ be a prime monomial ideal of $R = k[x_{1},\ldots,x_{n}]$. If $I,J$ are $P$-primary monomial ideals of $R$, then $\omega(I+J)\le\min\{\omega(I),\omega(J)\} \le \max\{\omega(I),\omega(J)\}=\omega(I\cap J)\le\omega(IJ)\le \omega(I)+\omega(J)$. Moreover, $\omega(I:J)\ge \omega(I) - \omega(J)$.
\end{lemm}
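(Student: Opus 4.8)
The plan is to push every inequality through to a statement about Noether exponents, using that $\omega = e$ for primary ideals (Lemma~\ref{fund}). The preliminary bookkeeping is to check that each of $I+J$, $I\cap J$, $IJ$, and (when $J\not\subseteq I$) $I:J$ is again $P$-primary: for $I\cap J$, $IJ$, and $I:J$ this is exactly Corollary~\ref{monprimary}. For $I+J$, writing $P=(x_{i_{1}},\cdots,x_{i_{r}})$, note that $G(I+J)\subseteq G(I)\cup G(J)$, so every element of $G(I+J)$ is a monomial in $k[x_{i_{1}},\cdots,x_{i_{r}}]$, while $I+J$ contains the pure power $x_{i_{j}}^{a_{j}}$ of each $x_{i_{j}}$ (taken from the generators of $I$ guaranteed by Lemma~\ref{pch}); hence $I+J$ has the shape of Lemma~\ref{pch} and is $P$-primary. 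After this, I may freely replace $\omega$ by $e$ for each ideal appearing below.

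Next I would walk through the chain from left to right. For $\omega(I+J)\le\min\{\omega(I),\omega(J)\}$, apply Lemma~\ref{orderreversing} to the inclusions $I\subseteq I+J$ and $J\subseteq I+J$ of $P$-primary ideals. The central equality $\max\{\omega(I),\omega(J)\}=\omega(I\cap J)$ follows from Lemma~\ref{small} with $r=2$, $J_{1}=I$, $J_{2}=J$, once we pass to Noether exponents. For $\omega(I\cap J)\le\omega(IJ)$, apply Lemma~\ref{orderreversing} again, now to $IJ\subseteq I\cap J$. Finally, for $\omega(IJ)\le\omega(I)+\omega(J)$, use $P^{e(I)}\subseteq I$ and $P^{e(J)}\subseteq J$ to get
\[
P^{\,e(I)+e(J)}=P^{e(I)}P^{e(J)}\subseteq IJ,
\]
so that $e(IJ)\le e(I)+e(J)$.

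For the last assertion I would split on whether $J\subseteq I$. If $J\subseteq I$, then $I:J=R$, so $\omega(I:J)=0$, while Lemma~\ref{orderreversing} gives $\omega(I)\le\omega(J)$, hence $\omega(I:J)=0\ge\omega(I)-\omega(J)$. If $J\not\subseteq I$, then $I:J$ is $P$-primary by Corollary~\ref{monprimary}, and from $P^{e(I:J)}\subseteq I:J$ and $P^{e(J)}\subseteq J$ we obtain
\[
P^{\,e(I:J)+e(J)}=P^{e(I:J)}P^{e(J)}\subseteq (I:J)J\subseteq I,
\]
whence $e(I)\le e(I:J)+e(J)$, i.e. $\omega(I:J)\ge\omega(I)-\omega(J)$.

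The only mild subtlety is the preliminary verification that all four derived ideals remain $P$-primary (together with the case split for $I:J$), since this is what makes $\omega=e$ available; once that is in place the whole statement reduces to the identity $P^{a+b}=P^{a}P^{b}$ and the order-reversing behaviour of $\omega$ on primary ideals of a fixed radical, so I do not expect a genuine obstacle.
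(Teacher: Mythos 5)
Your proposal is correct, and its skeleton matches the paper's proof: both reduce everything to Noether exponents via $\omega=e$ for $P$-primary ideals (Lemma \ref{fund}), use Lemma \ref{orderreversing} on the containments $I\subseteq I+J$, $J\subseteq I+J$ and $IJ\subseteq I\cap J$, and get $\omega(IJ)\le\omega(I)+\omega(J)$ and the colon bound from $P^{a+b}=P^{a}P^{b}$. The genuine difference is the central equality: the paper proves $\max\{\omega(I),\omega(J)\}\le\omega(I\cap J)$ by Lemma \ref{orderreversing} and then obtains the reverse inequality by passing to the standard decompositions of $I$ and $J$, discarding redundant irreducible components, and invoking Corollary \ref{primary}; you instead apply Lemma \ref{small} with $J_{1}=I$, $J_{2}=J$ to get $e(I\cap J)=\max\{e(I),e(J)\}$ in one stroke, which is shorter and avoids the decomposition bookkeeping (the paper's route has the side benefit of displaying the irreducible components of $I\cap J$ explicitly, but that information is not needed here). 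You also explicitly verify that $I+J$ is $P$-primary via Lemma \ref{pch}, using $G(I+J)\subseteq G(I)\cup G(J)$ and the presence of pure powers of each variable of $P$; the paper attributes this to Corollary \ref{monprimary}, which literally covers only $I\cap J$, $IJ$ and $I:J$, so your check closes a small gap, and this primariness is genuinely needed to apply Lemma \ref{orderreversing} to $I+J$. For the colon inequality, your computation $P^{e(I:J)+e(J)}\subseteq(I:J)J\subseteq I$ is the same estimate the paper obtains by citing the first part of the lemma together with Lemma \ref{orderreversing}, just phrased directly at the level of Noether exponents.
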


\begin{proof}
Note that by Corollary \ref{monprimary}, $IJ\subseteq I\cap J\subseteq I+J$ are all $P$-primary monomial ideals. Therefore $\omega(I+J)\le \min\{\omega(I),\omega(J)\}\le\max\{\omega(I),\omega(J)\}\le \omega(I\cap J)\le\omega(IJ)$  by Lemma \ref{orderreversing}. On the other hand, let $I=\bigcap_{i=1}^{r}Q_{i}$ and $J=\bigcap_{j=1}^{s}T_{j}$ be standard decomposition of $I$ and $J$, respectively. Then $I\cap J=(\bigcap_{i=1}^{r}Q_{i})\cap(\bigcap_{j=1}^{s}T_{j})$ is an irreducible decomposition of $I\cap J$, and by throwing away any redundant component, there exists $A\subseteq \{1,\cdot\cdot\cdot, r\}$ and $B\subseteq \{1,\cdot\cdot\cdot, s\}$ so $I\cap J=(\bigcap_{i\in A}Q_{i})\cap(\bigcap_{j\in B}T_{j})$ is the standard decomposition of $I\cap J$. Thus by Corollary \ref{primary}, 
\begin{align*}
\omega(I\cap J)&=\max\{\max_{i\in A}\{e(Q_{i})\},\max_{j\in B}\{e(T_{j})\}\}\\
&\le \max\{\max_{1\le i\le r}\{e(Q_{i})\},\max_{1\le j\le s}\{e(T_{j})\}\}\\
&=\max\{\omega(I),\omega(J)\}.
\end{align*}
Moreover, $(\sqrt{IJ})^{e(I)+e(J)}=P^{e(I)+e(J)}=P^{e(I)}P^{e(J)}=(\sqrt{I})^{e(I)}(\sqrt{J})^{e(J)}\subseteq IJ$, and so  $e(IJ)\le e(I)+e(J)$. Combined with Lemma \ref{fund}, we have $\omega(IJ)\le \omega(I)+\omega(J)$.
It remains to show that $\omega(I:J)\ge \omega(I)-\omega(J)$. When $J\subseteq I$, then we have $I:J=R$ and $\omega(I:J)=0\ge \omega(I)-\omega(J)$ by Lemma \ref{orderreversing}. If $J\not\subseteq I$, then $I:J$ is $P$-primary by Corollary \ref{monprimary}, and since $J(I:J)\subseteq I$, we have $\omega(I:J)+\omega(J)\ge \omega(I)$ by the first part of this lemma, hence the claim.
\end{proof}

As Anderson and Badawi pointed out (\cite[Example 2.7]{Anderson}), the conclusion of Lemma \ref{idealop} does not hold in every ring $R$. We add, that even in a polynomial ring over a field, the conclusion of the above lemma may fail if we drop any part of the hypothesis.

\begin{ex} 
\label{tw}
Let $R=k[x,y,z]$ and $I=(x^{2}, xy, y^{2}, xz^{2})$ and $J=(x^{2}, xy, y^{2}, yz^{3})$, so that neither $I$ nor $J$ are primary ideals. The standard decompositions of $I,J$ and $I\cap J$ are
\begin{align*} 
I&=(x^{2}, y, z^{2})\cap (x, y^{2})\\
J&=(x, y^{2}, z^{3})\cap (x^{2}, y)\\
I\cap J&=(x, y^{2})\cap (x^{2}, y)\\
I+J&=(x,y)\cap (x, y^{2}, z^{3})\cap (x^{2}, y, z^{2}).
\end{align*}

Thus we have $\omega(I)=3$, $\omega(J)=4$,  $\omega(I\cap J)=2$ and $\omega(I+J)=4$, so that $\omega(I\cap J)<\omega(I+J)=\max\{\omega(I),\omega(J)\}$.
\end{ex}

\begin{ex} Let $R = k[x,y,z]$ and $I=(x,y)$ and $J=(y,z^{2})$, so that $I$ and $J$ are both primary, but $\sqrt{I} \neq \sqrt{J}$. Then we have $\omega(I)=1$, $\omega(J)=2$ and $\omega(I\cap J)=3$, so that $\omega(I\cap J)>\max\{\omega(I),\omega(J)\}$.
\end{ex}

\begin{coro}
\label{idealp}
Let $I$ be a primary monomial ideal of $R = k[x_{1},\ldots,x_{n}]$. Then for each $m \in \mathbb{N}$ we have $\omega(I^{m})\le m\omega(I)$.
\end{coro}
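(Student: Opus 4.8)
The plan is to reduce the claim to the inequality $e(I^{m}) \le m\, e(I)$ for a primary monomial ideal $I$, which then yields the result immediately via Lemma~\ref{fund}. First I would note that since $I$ is a primary monomial ideal, so is $I^{m}$ for each $m \in \mathbb{N}$: this follows from Corollary~\ref{monprimary}, applied $m-1$ times (or directly, since a power of a $P$-primary ideal is again $P$-primary in this setting). Consequently Lemma~\ref{fund} gives $\omega(I^{m}) = e(I^{m})$ and $\omega(I) = e(I)$, so it suffices to prove $e(I^{m}) \le m\, e(I)$.

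To establish $e(I^{m}) \le m\, e(I)$, set $P = \sqrt{I}$, so that $\sqrt{I^{m}} = P$ as well. By definition of the Noether exponent, $(\sqrt{I})^{e(I)} = P^{e(I)} \subseteq I$. Raising both sides to the $m$-th power (using that products of ideals respect containments) gives
\[
(\sqrt{I^{m}})^{m\,e(I)} = P^{m\,e(I)} = \bigl(P^{e(I)}\bigr)^{m} \subseteq I^{m}.
\]
Hence $I^{m}$ satisfies the defining condition for its Noether exponent at the value $m\,e(I)$, so $e(I^{m}) \le m\, e(I)$ by minimality. Combining with the previous paragraph, $\omega(I^{m}) = e(I^{m}) \le m\, e(I) = m\, \omega(I)$, which is the desired conclusion.

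This proof is essentially a one-line computation once the right reductions are in place; there is no genuine obstacle. The only point requiring a moment's care is the verification that $I^{m}$ is primary — without this, Lemma~\ref{fund} does not apply and we would only get the weaker bound $\omega(I^{m}) \le \omega^{\bullet}(I^{m})$, which is not obviously $\le m\,\omega(I)$. But Corollary~\ref{monprimary} handles this cleanly. (It is worth remarking that the reverse inequality $\omega(I^{m}) \ge m\,\omega(I)$ — which would give $\omega$-linearity — is genuinely harder and presumably the subject of the results that follow; here only the easy direction is claimed.)
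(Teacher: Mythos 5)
Your proof is correct and is in substance the same as the paper's: the paper deduces the corollary by induction on $m$ from Lemma \ref{idealop}, whose inequality $\omega(IJ)\le\omega(I)+\omega(J)$ for $P$-primary monomial ideals is itself obtained from the containment $(\sqrt{IJ})^{e(I)+e(J)}=P^{e(I)+e(J)}\subseteq IJ$ together with Lemma \ref{fund}. You have simply inlined that Noether-exponent computation directly for powers (using Corollary \ref{monprimary} to see $I^{m}$ is $P$-primary and then $e(I^{m})\le m\,e(I)$), a valid and slightly more direct packaging of the same argument.
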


\begin{proof}
Follows immediately by induction on $m$ and Lemma \ref{idealop}.
\end{proof}

Next, we derive a characterization of primary monomial $\omega$-linear ideals.

\begin{lemm}
\label{limitratio}
Let $R=k[x_{1},\cdot\cdot\cdot, x_{n}]$ and $Q$ a primary monomial ideal of $R$, so that $G(Q)=\{x_{i_{1}}^{a_{1}},\cdot\cdot\cdot, x_{i_{r}}^{a_{r}}, f_{1},\cdot\cdot\cdot, f_{t}\}$ where  $a_{1},\ldots, a_{r} \in \mathbb{N}$, $1 \leq i_{1} < i_{2} < \cdots < i_{r} \leq n$ and $f_{1},\cdot\cdot\cdot, f_{t}$ are monomials in  $k[x_{i_{1}},\cdot\cdot\cdot, x_{i_{r}}]$. Choose $s \in \{1,\cdot\cdot\cdot, r\}$ so $a_{s}=\max_{1\le j\le r}\{a_{j}\}$.
\begin{enumerate}
\item
If $f_{1}=f_{2}=\cdot\cdot\cdot=f_{t}=0$
, then $\omega(Q^{m})=(m-1)a_{s}+\omega(Q)$ for each $m \in\mathbb{N}$.
\item
$Q$ is $\omega$-linear if and only if $\omega(Q)=a_{s}$.
\end{enumerate}
\end{lemm}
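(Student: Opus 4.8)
The plan is to prove (1) by directly computing the Noether exponent $e(Q^m)$, and (2) by comparing $\omega(Q^m)$ with $\omega((Q')^m)$ for the ``pure power part'' $Q'=(x_{i_1}^{a_1},\cdots,x_{i_r}^{a_r})$ of $Q$. For (1), here $Q=(x_{i_1}^{a_1},\cdots,x_{i_r}^{a_r})$ is $P$-primary with $P=(x_{i_1},\cdots,x_{i_r})$, and by Corollary \ref{monprimary} so is $Q^m$, hence $\omega(Q^m)=e(Q^m)$ by Lemma \ref{fund}. I would first record the elementary fact that a monomial $u=\prod_l x_{i_l}^{c_l}$ lies in $Q^m$ if and only if $\sum_l\lfloor c_l/a_l\rfloor\ge m$: for one direction pick $e_l\le\lfloor c_l/a_l\rfloor$ with $\sum_l e_l=m$ and note that $\prod_l x_{i_l}^{a_l e_l}$ is a product of $m$ generators of $Q$ dividing $u$; for the other, collect such a product dividing $u$ and count exponents. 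Thus $e(Q^m)=1+\max\{\sum_l c_l:\sum_l\lfloor c_l/a_l\rfloor\le m-1\}$, and writing $c_l=a_lq_l+\rho_l$ with $0\le\rho_l\le a_l-1$, the contributions $\sum_l a_lq_l$ (maximized, subject to $\sum_l q_l\le m-1$, by putting all weight on the index $s$ since $a_s=\max_l a_l$) and $\sum_l\rho_l$ (maximized at $\sum_l(a_l-1)$) can be optimized simultaneously, giving $e(Q^m)=a_s(m-1)+(a_1+\cdots+a_r-r+1)=(m-1)a_s+\omega(Q)$ by Lemma \ref{powermonomial}.

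For (2), first suppose $\omega(Q)=a_s$. Corollary \ref{idealp} gives $\omega(Q^m)\le m\omega(Q)=ma_s$, so it suffices to prove $\omega(Q^m)\ge ma_s$, and for this I would show $x_{i_s}^{ma_s}\in G(Q^m)$ and then apply Lemma \ref{topi}. Indeed, any monomial of $Q^m$ dividing $x_{i_s}^{ma_s}$ is a pure power of $x_{i_s}$ and is divisible by a product of $m$ elements of $G(Q)$, each of which is therefore itself a pure power of $x_{i_s}$; but $x_{i_s}^{a_s}$ is the only pure power of $x_{i_s}$ in $G(Q)$ (no $f_j$ can be one, by minimality of $G(Q)$ and properness of $Q$), so such a monomial is divisible by, hence equal to, $x_{i_s}^{ma_s}$. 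This shows $\omega(Q^m)=ma_s=m\omega(Q)$ for all $m$, i.e., $Q$ is $\omega$-linear. Conversely I would argue the contrapositive: assume $\omega(Q)>a_s$ and put $Q'=(x_{i_1}^{a_1},\cdots,x_{i_r}^{a_r})\subseteq Q$, so that $(Q')^m\subseteq Q^m$ are both $P$-primary. By Lemma \ref{orderreversing} and part (1) applied to $Q'$, $\omega(Q^m)\le\omega((Q')^m)=(m-1)a_s+\omega(Q')$; since $\omega(Q)-a_s\ge 1$, the difference $m\omega(Q)-\big((m-1)a_s+\omega(Q')\big)=m(\omega(Q)-a_s)+a_s-\omega(Q')$ is positive for all large $m$, so $\omega(Q^m)<m\omega(Q)$ for such $m$ and $Q$ is not $\omega$-linear.

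The only step needing care is the claim $x_{i_s}^{ma_s}\in G(Q^m)$ in the forward direction of (2), which rests on no $f_j$ being a pure power of $x_{i_s}$; this follows because $G(Q)$ is the minimal monomial generating set of a proper ideal, so a generator of the form $x_{i_s}^c$ would have to divide, or be divisible by, $x_{i_s}^{a_s}\in G(Q)$. Everything else reduces to the routine optimization in (1) or to direct invocations of Lemma \ref{powermonomial}, Corollary \ref{idealp}, Lemma \ref{orderreversing}, and Lemma \ref{topi}.
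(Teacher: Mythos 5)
Your proof is correct, but it takes a partly different route from the paper's. For part (1) the paper does not compute $e(Q^{m})$ by hand: it quotes the decomposition $Q^{m}=\bigcap_{k}(x_{i_{1}}^{k_{1}a_{1}},\ldots,x_{i_{r}}^{k_{r}a_{r}})$, the intersection running over $k\in\mathbb{N}^{r}$ with $\sum_{j}k_{j}=m+r-1$ (citing Moore--Rogers--Sather-Wagstaff), and then applies Corollary \ref{primary} and Lemma \ref{powermonomial}; your floor-function criterion $\sum_{l}\lfloor c_{l}/a_{l}\rfloor\ge m$ and the ensuing optimization prove the same formula from scratch, which is more self-contained at the cost of a little bookkeeping (and is essentially the content of the quoted decomposition). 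For part (2) the forward implication is the same as the paper's (Lemma \ref{topi} plus Corollary \ref{idealp}); you actually supply the verification that $x_{i_{s}}^{ma_{s}}\in G(Q^{m})$, which the paper only asserts. For the reverse implication the paper sandwiches $Q^{m}$ between $I_{1}=(x_{i_{1}}^{a_{1}},\ldots,x_{i_{r}}^{a_{r}})^{m}$ and $I_{2}=(x_{i_{1}},\ldots,x_{i_{s}}^{ma_{s}},\ldots,x_{i_{r}})$ and takes $\lim_{m\to\infty}\omega(Q^{m})/m=a_{s}$, whereas you argue the contrapositive using only the upper bound $\omega(Q^{m})\le\omega((Q')^{m})=(m-1)a_{s}+\omega(Q')$ for a single large $m$; both arguments hinge on the same inclusion $(Q')^{m}\subseteq Q^{m}$ together with Lemma \ref{orderreversing}, so the difference is mostly cosmetic, with yours avoiding the limit and the paper's avoiding the choice of a large $m$. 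One small point you should make explicit in the contrapositive: passing from $\omega(Q)\neq a_{s}$ to $\omega(Q)>a_{s}$ uses $\omega(Q)\ge a_{s}$, which is immediate from Lemma \ref{topi} since $x_{i_{s}}^{a_{s}}\in G(Q)$.
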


\begin{proof}
\begin{enumerate}
\item Let $Q=(x_{i_{1}}^{a_{1}},\cdot\cdot\cdot, x_{i_{r}}^{a_{r}})$. Then given $m \in \mathbb{N}$, set $S_{m}=\{(k_{1},\cdot\cdot\cdot,k_{r})\in \mathbb{N}^{r}\mid \sum\limits_{j=1}^{r}k_{j}=m+r-1\}$ and $Q_{k}=(x_{i_{1}}^{k_{1}a_{1}},\cdot\cdot\cdot, x_{i_{r}}^{k_{r}a_{r}})$ for each $k=(k_{1},\cdot\cdot\cdot,k_{r}) \in S_{m}$. Then $Q^{m}=\cap_{k \in S_{m}}Q_{k}$ (\cite[Theorem 6.2.4]{Moore}). 
Now by Corollary \ref{primary} and Lemma \ref{powermonomial}, $\omega(Q^{m})=\max_{k \in S_{m}}\{e(Q_{k})\}=\max_{k\in S_{m}}\{\sum\limits_{j=1}^{r} k_{j}a_{j}\}-r+1=(m-1)a_{s}+\omega(Q)$.
\item
Fix $m \in \mathbb{N}$ and 
set $I_{1}=(x_{i_{1}}^{a_{1}},\cdot\cdot\cdot, x_{i_{r}}^{a_{r}})^{m}, I_{2}=(x_{i_{1}},\cdot\cdot\cdot, x_{i_{s-1}},x_{i_{s}}^{ma_{s}}, x_{i_{s+1}}, \cdot\cdot\cdot, x_{i_{r}})$. It follows that $I_{1}\subseteq Q^{m}\subseteq I_{2}$ are $(x_{i_{1}},\cdot\cdot\cdot, x_{i_{r}})$-primary ideals, so we have\\
$ma_{s}=\omega(I_{2})\le\omega(Q^{m})\le\omega(I_{1})=(m-1)a_{s}+\sum_{j=1}^{r}a_{j}-r+1$ by Corollary \ref{orderreversing}, Lemma \ref{powermonomial} and part 1 of this lemma. Therefore if $Q$ is $\omega$-linear, then $\omega(Q)=\displaystyle \lim_{m\to\infty}\cfrac{m\omega(Q)}{m}= \displaystyle \lim_{m\to\infty}\cfrac{\omega(Q^{m})}{m}=a_{s}$. Conversely, suppose that $\omega(Q)=a_{s}$ and fix $m\in\mathbb{N}$. Then since $x_{i_{s}}^{ma_{s}}\in G(Q^{m})$ we have $\omega(Q^{m})\ge ma_{s}=m\omega(Q)$ by Lemma \ref{topi}. Hence $\omega(Q^{m})=m\omega(Q)$ by Corollary \ref{idealp} and so $Q$ is $\omega$-linear.
\end{enumerate}
\end{proof}



\begin{coro}
\label{return}
Let $I$ be an irreducible monomial ideal of $R=k[x_{1},\cdot\cdot\cdot, x_{n}]$ so that  $I=(x_{i_{1}}^{a_{1}},\cdot\cdot\cdot, x_{i_{r}}^{a_{r}})$ for some $a_{1},\ldots, a_{n} \in \mathbb{N}$. Set $a_{s}=\max_{1\le j\le r}\{a_{j}\}$. Then the following are equivalent.
\begin{enumerate}
\item
$I$ is $\omega$-linear.
\item
$\omega(I^{m})=m\omega(I)$ for some $m>1$.
\item
$\omega(I)=a_{s}$.
\item
$a_{i}=1$ for each $i\neq s$.
\end{enumerate}
\end{coro}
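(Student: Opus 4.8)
The plan is to prove Corollary~\ref{return} by establishing the cycle of implications $(1)\Rightarrow(2)\Rightarrow(3)\Rightarrow(4)\Rightarrow(1)$, leaning on Lemma~\ref{limitratio} for the pieces that are really about $\omega$-linearity and on Lemma~\ref{powermonomial} for the explicit value of $\omega(I)$. The implication $(1)\Rightarrow(2)$ is trivial: being $\omega$-linear means $\omega(I^m)=m\omega(I)$ for \emph{all} $m\in\mathbb{N}$, so in particular for some $m>1$. Likewise $(3)\Rightarrow(1)$ is immediate from Lemma~\ref{limitratio}(2), since $I$ is in particular a primary monomial ideal with $G(I)$ consisting only of pure powers (the $f_j$ are all absent), so that lemma literally says $I$ is $\omega$-linear precisely when $\omega(I)=a_s$.

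Next I would handle $(3)\Leftrightarrow(4)$, which is where the concrete computation lives. By Lemma~\ref{powermonomial} we have $\omega(I)=e(I)=a_1+\cdots+a_r-r+1=\sum_{i=1}^r(a_i-1)+1$. Since $a_s=\max_j a_j$, the difference $\omega(I)-a_s=\sum_{i=1}^r(a_i-1)+1-a_s=\sum_{i\neq s}(a_i-1)+(a_s-1)+1-a_s=\sum_{i\neq s}(a_i-1)$, a sum of nonnegative integers. Hence $\omega(I)=a_s$ if and only if $a_i-1=0$ for every $i\neq s$, which is exactly condition $(4)$. This is a one-line identity once written out, so it is not the obstacle.

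The only implication requiring an actual argument is $(2)\Rightarrow(3)$, i.e. that linearity at a \emph{single} exponent $m>1$ already forces $\omega(I)=a_s$. Here I would invoke the two-sided bound from the proof of Lemma~\ref{limitratio}(2): with $I_1=(x_{i_1}^{a_1},\dots,x_{i_r}^{a_r})^m$ and $I_2=(x_{i_1},\dots,x_{i_{s-1}},x_{i_s}^{ma_s},x_{i_{s+1}},\dots,x_{i_r})$ one has $I_1\subseteq I^m\subseteq I_2$ among $(x_{i_1},\dots,x_{i_r})$-primary ideals, whence $ma_s=\omega(I_2)\le\omega(I^m)\le\omega(I_1)=(m-1)a_s+\omega(I)$ by Lemma~\ref{orderreversing}, Lemma~\ref{powermonomial}, and Lemma~\ref{limitratio}(1). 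If $(2)$ holds then $\omega(I^m)=m\omega(I)$, so the lower bound gives $ma_s\le m\omega(I)$, i.e. $a_s\le\omega(I)$; combined with $\omega(I)\le a_s$ (which follows from the upper bound $m\omega(I)=\omega(I^m)\le(m-1)a_s+\omega(I)$, hence $\omega(I)\le a_s$ after dividing by $m-1>0$, using $m>1$), we conclude $\omega(I)=a_s$, which is $(3)$. The mild subtlety — the ``main obstacle,'' such as it is — is making sure the step $m\omega(I)\le (m-1)a_s+\omega(I)\Rightarrow\omega(I)\le a_s$ genuinely uses $m>1$ (so that $m-1\geq 1$ and the cancellation is legitimate); this is precisely why condition $(2)$ must be stated for some $m$ strictly greater than $1$. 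Once that is noted, the corollary follows by chaining the four implications.
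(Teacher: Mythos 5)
Your proposal is correct and follows essentially the same route as the paper: $(3)\Leftrightarrow(4)$ from the formula in Lemma~\ref{powermonomial}, $(3)\Leftrightarrow(1)$ from Lemma~\ref{limitratio}(2), and $(2)\Rightarrow(3)$ from Lemma~\ref{limitratio}(1). The only cosmetic difference is that for $(2)\Rightarrow(3)$ the paper plugs directly into the exact identity $\omega(I^{m})=(m-1)a_{s}+\omega(I)$ (note $I_{1}=I^{m}$ here, so your upper bound is in fact this equality), while you sandwich $\omega(I^{m})$ between $ma_{s}$ and $(m-1)a_{s}+\omega(I)$; both correctly use $m>1$ at the cancellation step.
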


\begin{proof}
$1\Rightarrow 2$: Obvious.\\
$2\Rightarrow 3$: Suppose that $\omega(I^{m})=m\omega(I)$ for some $m >1$. By Lemma \ref{limitratio}.1 we have $\omega(I^{m})=(m-1)a_{s}+\omega(I)$. Hence $\omega(I)=a_{s}$.\\
$3\Leftrightarrow 4$: Immediate consequence of Lemma \ref{powermonomial}.\\
$3\Leftrightarrow 1$: Follows from Lemma \ref{limitratio}.2.
\end{proof}

\begin{lemm}
Let $P$ be a monomial prime ideal of $R$. If $I,J$ are $P$-primary $\omega$-linear monomial ideals of $R$, then so is $I\cap J$.
\end{lemm}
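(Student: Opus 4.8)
The plan is to reduce the statement entirely to the characterization of $\omega$-linear primary monomial ideals given in Lemma \ref{limitratio}(2): a $P$-primary monomial ideal $Q$ is $\omega$-linear if and only if $\omega(Q)$ equals the largest exponent occurring among the pure-power generators of $Q$. So, after recording that $I \cap J$ is again $P$-primary by Corollary \ref{monprimary}, the proof amounts to computing, for $Q = I \cap J$, both its maximal pure-power exponent and $\omega(I \cap J)$, and checking the two numbers coincide.

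First I would fix $P = (x_{i_1}, \ldots, x_{i_r})$ and apply Lemma \ref{pch} to all three ideals: $G(I)$ contains the pure powers $x_{i_j}^{a_j}$, $G(J)$ contains $x_{i_j}^{b_j}$, and $G(I \cap J)$ contains $x_{i_j}^{c_j}$ for $j = 1, \ldots, r$, and in each case these exhaust the pure powers of a single variable lying in the ideal. The one place where a small computation is needed — and which I regard as the only real (and mild) obstacle — is identifying $c_j$. Since the only generator of $I$ that can divide a pure power $x_{i_j}^{m}$ is $x_{i_j}^{a_j}$ (the remaining generators of $I$ lie in $k[x_{i_1}, \ldots, x_{i_r}]$ and, being part of a minimal generating set, are not pure powers of $x_{i_j}$), Lemma \ref{support}(i) gives $x_{i_j}^{m} \in I$ iff $m \ge a_j$, and likewise $x_{i_j}^{m} \in J$ iff $m \ge b_j$; hence $x_{i_j}^{m} \in I \cap J$ iff $m \ge \max\{a_j, b_j\}$, so $c_j = \max\{a_j, b_j\}$. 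Taking the maximum over $j$ shows that the largest pure-power exponent of $I \cap J$ is $\max\{\,\max_j a_j,\ \max_j b_j\,\}$.

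To conclude, set $a_s = \max_j a_j$ and $b_{s'} = \max_j b_j$. Because $I$ and $J$ are $\omega$-linear, Lemma \ref{limitratio}(2) gives $\omega(I) = a_s$ and $\omega(J) = b_{s'}$, while Lemma \ref{idealop}, applied to the $P$-primary monomial ideals $I$ and $J$, gives $\omega(I \cap J) = \max\{\omega(I), \omega(J)\} = \max\{a_s, b_{s'}\}$. By the previous paragraph this value is precisely the largest pure-power exponent of $I \cap J$, so a final application of Lemma \ref{limitratio}(2) — this time to $I \cap J$ — shows that $I \cap J$ is $\omega$-linear, as desired.
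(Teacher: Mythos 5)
Your proposal is correct and follows essentially the same route as the paper: reduce to the characterization of primary $\omega$-linear ideals in Lemma \ref{limitratio}(2), use Lemma \ref{idealop} to get $\omega(I\cap J)=\max\{\omega(I),\omega(J)\}$, and identify the pure-power generators of $I\cap J$ (your explicit computation $c_{j}=\max\{a_{j},b_{j}\}$ is just a slightly more detailed version of the paper's lcm argument for a single variable after a WLOG reduction). No gaps.
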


\begin{proof}
Without loss of generality we may assume that $\omega(I)\ge \omega(J)$.   
By Lemma \ref{limitratio}.2,  there is $j\in\{1,\cdot\cdot\cdot, r\}$ so that $x_{i_{j}}^{\omega(I)}\in G(I)$. There exists $a\in\mathbb{N}$ so $x_{i_{j}}^{a}\in G(J)$. Then again, by Lemma \ref{limitratio}.2, $a\le \omega(J)$. Now, $x_{i_{j}}^{\omega(I)}=lcm(x_{i_{j}}^{\omega(I)}, x_{i_{j}}^{a}) \in G(I\cap J)$. On the other hand, $\omega(I\cap J)=\omega(I)$ by Lemma \ref{idealop}. Hence $I\cap J$ is $\omega$-linear by Lemma \ref{limitratio}.2.
\end{proof}

Given a monomial ideal $I$ of $R=k[x,y]$ we will write $I=(x^{a_{1}}y^{b_{1}}, \ldots, x^{a_{r}}y^{b_{r}})$ where $\{a_{i}\}$ and $\{b_{i}\}$ are strictly decreasing and strictly increasing sequences of non-negative integers, respectively. Similarly, if $J$ is a monomial ideal of $R$ we write $J=(x^{c_{1}}y^{d_{1}}, \ldots, x^{c_{s}}y^{d_{s}})$ where $\{c_{i}\}$ and $\{d_{i}\}$ are strictly decreasing and strictly increasing sequence of non-negative integers, respectively. Hence $b_{1}=a_{r}=0$ iff $I$ is $(x,y)$-primary, and $d_{1}=c_{s}=0$ iff $J$ is $(x,y)$-primary.

\begin{lemm}
\label{2dimex}
Let $R=k[x,y]$ and $I,J$ be $(x,y)$-primary monomial ideals with $\omega(I)\ge \omega(J)$. Then $\omega(IJ)\le \omega(I)+\max\{c_{1}, d_{s}\}$.
\end{lemm}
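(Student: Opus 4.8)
The plan is to reduce the statement to a containment of powers of $\mathfrak{m}=(x,y)$. Since $I$ and $J$ are $(x,y)$-primary, so is $IJ$ by Corollary \ref{monprimary}, and hence $\omega(IJ)=e(IJ)$ by Lemma \ref{fund}. As $\sqrt{IJ}=\mathfrak{m}$, proving the lemma amounts to showing $\mathfrak{m}^{N}\subseteq IJ$ for $N:=\omega(I)+\max\{c_{1},d_{s}\}$, and since $\mathfrak{m}^{N}$ is generated by the monomials of degree $N$, it is enough to check that $x^{p}y^{q}\in IJ$ whenever $p+q=N$.

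Before the case analysis I would record two elementary observations. First, because $I$ is $(x,y)$-primary we have $e(I)=\omega(I)$ and $\sqrt{I}=\mathfrak{m}$, so every monomial of degree at least $\omega(I)$ lies in $\mathfrak{m}^{\omega(I)}\subseteq I$. Second, since $J$ is $(x,y)$-primary we have $d_{1}=c_{s}=0$, so $x^{c_{1}}=x^{c_{1}}y^{d_{1}}$ and $y^{d_{s}}=x^{c_{s}}y^{d_{s}}$ are minimal generators of $J$; consequently any monomial $x^{\alpha}y^{\beta}$ with $\alpha\ge c_{1}$ or $\beta\ge d_{s}$ already lies in $J$. (As a byproduct, Lemma \ref{topi} gives $\omega(J)\ge\max\{c_{1},d_{s}\}$, so the asserted bound indeed refines $\omega(IJ)\le\omega(I)+\omega(J)$ from Lemma \ref{idealop}.)

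Now fix $p,q\in\mathbb{N}_{0}$ with $p+q=N$ and split into two cases according to whether $p\ge c_{1}$. If $p\ge c_{1}$, I would factor $x^{p}y^{q}=(x^{p-c_{1}}y^{q})\cdot x^{c_{1}}$; the first factor has degree $N-c_{1}\ge\omega(I)$ (as $\max\{c_{1},d_{s}\}\ge c_{1}$) and hence lies in $I$, while $x^{c_{1}}\in J$. If instead $p<c_{1}$, then $q=N-p>N-c_{1}\ge\omega(I)$, and here is the one place the hypothesis $\omega(I)\ge\omega(J)$ is used: combined with $\omega(J)\ge d_{s}$ it gives $q>\omega(I)\ge d_{s}$, so $q-d_{s}\ge 0$; I would then factor $x^{p}y^{q}=(x^{p}y^{q-d_{s}})\cdot y^{d_{s}}$, where the first factor has degree $N-d_{s}\ge\omega(I)$ and so lies in $I$, while $y^{d_{s}}\in J$. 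In either case $x^{p}y^{q}\in IJ$, giving $\mathfrak{m}^{N}\subseteq IJ$ and hence $\omega(IJ)=e(IJ)\le N$.

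The argument is short once the reduction to monomials of a fixed degree is in place; the only genuinely delicate point is the bookkeeping in the second case, namely deciding which pure power to peel off into $J$ when $p$ is too small to extract $x^{c_{1}}$ — one must instead extract $y^{d_{s}}$, which forces the inequality $q\ge d_{s}$ and is precisely where one needs both $\omega(I)\ge\omega(J)$ and the lower bound $\omega(J)\ge d_{s}$ coming from Lemma \ref{topi}. (A more computational alternative would be to write down $G(IJ)$ explicitly and apply Theorem \ref{hoo}, but the $\mathfrak{m}^{N}$ route avoids analyzing the staircase of $IJ$.)
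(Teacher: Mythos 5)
Your proof is correct and follows essentially the same route as the paper: both arguments come down to verifying the containment $\mathfrak{m}^{\omega(I)+\max\{c_{1},d_{s}\}}\subseteq IJ$ and then invoking $\omega=e$ for the $(x,y)$-primary ideal $IJ$. The only difference is packaging — the paper assumes $c_{1}\ge d_{s}$ without loss of generality and writes $\mathfrak{m}^{e(I)+c_{1}}=\mathfrak{m}^{e(I)}(x^{c_{1}},y^{c_{1}})\subseteq IJ$, concluding via Lemma \ref{orderreversing}, whereas you check monomials of degree $N$ directly with a case split on $p\ge c_{1}$.
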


\begin{proof}

We may assume that $c_{1}\ge d_{s}$. Then $e(I)=\omega(I)\ge\omega(J)\ge c_{1}$ by Lemma \ref{topi}, so $(x,y)^{e(I)+c_{1}}=(x,y)^{e(I)}(x^{c_{1}}, y^{c_{1}})=(\sqrt{I})^{e(I)}(x^{c_{1}}, y^{c_{1}})\subseteq IJ$ are $(x,y)$-primary ideals. Therefore $\omega(IJ)\le \omega((x,y)^{e(I)+c_{1}})=e(I)+c_{1}=\omega(I)+c_{1}$ by Lemma \ref{orderreversing}.
\end{proof}

We now classify $\omega$-linear monomial ideals $I$ in $R=k[x,y]$.

\begin{lemm}
\label{twodim}
Let  $R=k[x,y]$ and $I=(x^{a_{1}}y^{b_{1}},\cdot\cdot\cdot, x^{a_{r}}y^{b_{r}})$ be a monomial ideal of $R$. Then the following are equivalent.
\begin{enumerate}
\item
$I$ is $\omega$-linear.
\item
$\omega(I^{n})=n\omega(I)$ for some $n>1$.
\item
$\omega(I)=\max\{a_{1}+b_{1}, a_{r}+b_{r}\}$.
\end{enumerate}
\end{lemm}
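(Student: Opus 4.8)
Here is the plan. The implication $(1)\Rightarrow(2)$ is trivial. If $r=1$ then $I=(x^{a_{1}}y^{b_{1}})$ is principal, so $\omega(I^{m})=\deg(x^{ma_{1}}y^{mb_{1}})=m(a_{1}+b_{1})=m\omega(I)$ for all $m$ by Corollary \ref{realprinmo}, and all three conditions hold. Assume now $r>1$. Since $\{a_{i}\}$ is strictly decreasing and $\{b_{i}\}$ strictly increasing, the largest monomial dividing every generator of $I$ is $x^{a_{r}}y^{b_{1}}$, so $I=x^{a_{r}}y^{b_{1}}Q$ with
\[Q=(x^{a_{1}-a_{r}},\,x^{a_{2}-a_{r}}y^{b_{2}-b_{1}},\,\dots,\,x^{a_{r-1}-a_{r}}y^{b_{r-1}-b_{1}},\,y^{b_{r}-b_{1}}).\]
By Lemma \ref{pch}, $Q$ is an $(x,y)$-primary monomial ideal, and the only pure powers in $G(Q)$ are $x^{p}$ and $y^{q}$ with $p=a_{1}-a_{r}$, $q=b_{r}-b_{1}$; put $a_{s}=\max\{p,q\}$. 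Since $I^{m}=x^{ma_{r}}y^{mb_{1}}Q^{m}$, Corollary \ref{prinmo} gives $\omega(I^{m})=m(a_{r}+b_{1})+\omega(Q^{m})$ for every $m$; hence $\omega(I^{m})=m\omega(I)$ (for all $m$, resp.\ for some $m>1$) if and only if $\omega(Q^{m})=m\omega(Q)$ (for all $m$, resp.\ for some $m>1$), and because $\omega(I)=a_{r}+b_{1}+\omega(Q)$, condition $(3)$ is equivalent to $\omega(Q)=\max\{a_{1}-a_{r},\,b_{r}-b_{1}\}=a_{s}$. So it suffices to prove that for an $(x,y)$-primary monomial ideal $Q$ of $k[x,y]$ the statements ``$Q$ is $\omega$-linear'', ``$\omega(Q^{n})=n\omega(Q)$ for some $n>1$'', and ``$\omega(Q)=a_{s}$'' are equivalent.

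The engine is the inequality
\[\omega(Q^{n})\le (n-1)a_{s}+\omega(Q)\qquad(n\ge 1),\]
valid for every $(x,y)$-primary monomial ideal $Q$, where $a_{s}$ is the larger of its two pure-power exponents $p,q$. I would prove this by induction on $n$; the case $n=1$ is trivial. For $n\ge 2$ set $D=a_{s}+\omega(Q^{n-1})$. As $Q^{n-1}\subseteq Q$ and both are $(x,y)$-primary (Corollary \ref{monprimary}), Lemma \ref{orderreversing} gives $\omega(Q^{n-1})\ge\omega(Q)$, and $\omega(Q)\ge a_{s}$ by Lemma \ref{topi}; hence $D\ge 2a_{s}\ge p+q$. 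Let $x^{a}y^{b}$ be any monomial with $a+b=D$. Since $a+b\ge p+q$ we cannot have $a<p$ and $b<q$ simultaneously. If $a\ge p$, write $x^{a}y^{b}=x^{p}\cdot x^{a-p}y^{b}$; the cofactor has degree $D-p\ge D-a_{s}=\omega(Q^{n-1})=e(Q^{n-1})$, so it lies in $(x,y)^{e(Q^{n-1})}=(\sqrt{Q^{n-1}})^{e(Q^{n-1})}\subseteq Q^{n-1}$, whence $x^{a}y^{b}\in Q\cdot Q^{n-1}=Q^{n}$ since $x^{p}\in Q$. If instead $b\ge q$, argue symmetrically with $y^{q}$. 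Thus $(x,y)^{D}\subseteq Q^{n}$, so (using $\omega=e$ on primary ideals, Lemma \ref{fund}) $\omega(Q^{n})=e(Q^{n})\le D=a_{s}+\omega(Q^{n-1})\le (n-1)a_{s}+\omega(Q)$ by the inductive hypothesis.

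Granting this, the equivalences follow at once. One always has $\omega(Q)\ge a_{s}$ (Lemma \ref{topi}) and $\omega(Q^{n})\ge na_{s}$, the latter because $x^{np}$ (or $y^{nq}$) is a minimal generator of $Q^{n}$ of degree $na_{s}$, again by Lemma \ref{topi}. If $\omega(Q)=a_{s}$ then $na_{s}\le\omega(Q^{n})\le (n-1)a_{s}+a_{s}=na_{s}$, so $\omega(Q^{n})=n\omega(Q)$ for all $n$ and $Q$ is $\omega$-linear: this gives $(3)\Rightarrow(1)$, and $(1)\Rightarrow(2)$ is trivial. Conversely, if $\omega(Q^{n})=n\omega(Q)$ for some $n>1$, the inequality yields $n\omega(Q)\le (n-1)a_{s}+\omega(Q)$, i.e.\ $(n-1)\omega(Q)\le (n-1)a_{s}$, so $\omega(Q)\le a_{s}$ and therefore $\omega(Q)=a_{s}$: this is $(2)\Rightarrow(3)$. (The equivalence $(1)\Leftrightarrow(3)$ is also part of Lemma \ref{limitratio}.2, so the genuinely new input is only $(2)\Rightarrow(3)$.)

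The one real obstacle is the linear bound $\omega(Q^{n})\le(n-1)a_{s}+\omega(Q)$: the cheap estimates — $\omega(Q^{n})\le n\omega(Q)$ from Corollary \ref{idealp}, or $\omega(Q^{n})\le(n-1)a_{s}+p+q-1$ from sandwiching $(x^{p},y^{q})^{n}\subseteq Q^{n}$ and applying Lemma \ref{limitratio}.1 — are each too weak to force $\omega(Q)=a_{s}$ out of a single relation $\omega(Q^{n})=n\omega(Q)$ with $n$ small. What makes the induction go through is that a high-degree monomial can always be split by peeling off a \emph{pure-power} generator, of degree only $a_{s}$ rather than $\omega(Q)$, leaving a cofactor still of degree at least $e(Q^{n-1})$ and hence in $Q^{n-1}$; the exhaustiveness of the two cases $a\ge p$ and $b\ge q$ is exactly what the bound $D\ge p+q$ buys, and that rests on $\omega(Q)\ge a_{s}$. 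Everything else is bookkeeping with Corollary \ref{prinmo} and the dictionary between $I$ and $Q$.
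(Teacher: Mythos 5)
Your proof is correct and essentially follows the paper's route: the reduction to the $(x,y)$-primary case is the same, and your key inequality $\omega(Q^{n})\le\omega(Q^{n-1})+a_{s}$, proved by peeling off a pure-power generator and landing the cofactor in $(x,y)^{e(Q^{n-1})}\subseteq Q^{n-1}$, is precisely the paper's Lemma \ref{2dimex} applied with $I=Q^{n-1}$ and $J=Q$, proved by the same argument. The only difference is bookkeeping in $(2)\Rightarrow(3)$: the paper descends from $n$ to $n=2$ using $\omega(I^{n})\le\omega(I^{n-1})+\omega(I)$ (Lemma \ref{idealop}) together with $\omega(I^{n-1})\le(n-1)\omega(I)$ (Corollary \ref{idealp}) and then applies Lemma \ref{2dimex} once, whereas you iterate the same bound to get $\omega(Q^{n})\le(n-1)a_{s}+\omega(Q)$ and use it directly at $n$; both work equally well.
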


\begin{proof}
Note that given $n\in \mathbb{N}$ and a monomial $f$ of $R$,  by Lemma \ref{irreducible} we have 
\begin{align*}
\omega(I^{n})&=n\omega(I)\\
 \Leftrightarrow n(deg(f))+\omega(I^{n})&=n(deg(f))+n\omega(I)\\
 \Leftrightarrow deg(f^{n})+\omega(I^{n})&=n(deg(f)+\omega(I))\\  \Leftrightarrow \omega((fI)^{n})&=n\omega(fI).
\end{align*} 

Moreover, if $I$ is a principal ideal, then $I$ satisfies all of 1,2 and 3 by Corollary \ref{prinmo}. 
Hence we may assume that $I$ is a $(x,y)$-primary monomial ideal of $R$. That is, $a_{r}=b_{1}=0$.\\
\\
$1\Rightarrow 2$ is trivial.\\
$2\Rightarrow 3$: Suppose that $\omega(I^{n})=n\omega(I)$ for some $n>1$. Note that $\omega(I^{n-1})+\omega(I)\ge \omega(I^{n})=n\omega(I)$ by Lemma \ref{idealop} and $\omega(I^{n-1})\le(n-1)\omega(I)$ by Corollary \ref{idealp}, and thereby $\omega(I^{n-1})=(n-1)\omega(I)$. Hence we must have $\omega(I^{2})=2\omega(I)$. 
Since $\omega(I^{2})\le \omega(I)+\max\{a_{1}, b_{r}\}$ by Lemma \ref{2dimex}, 
$\omega(I)=\omega(I^{2})-\omega(I)\le \max\{a_{1}, b_{r}\}$. On the other hand, $\omega(I)\ge\max\{a_{1}, b_{r}\}$ by Lemma \ref{topi}. Therefore $\omega(I)=\max\{a_{1}, b_{r}\}$.\\
$3 \Rightarrow 1$: Follows from Lemma \ref{limitratio}.2.
\end{proof}

\begin{lemm}
\label{intomega}
The set of monomial $\omega$-linear ideals of $R=k[x,y]$ is multiplicatively closed.
\end{lemm}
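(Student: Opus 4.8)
The plan is to reduce to the case of $(x,y)$-primary ideals and then compute $\omega(IJ)$ exactly, using Lemma \ref{twodim}(3) as the final criterion.

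First I would reduce. Given a monomial ideal $L=(x^{a_1}y^{b_1},\dots,x^{a_r}y^{b_r})$ in canonical form, it factors as $L=x^{a_r}y^{b_1}L_0$ with $L_0$ either $R$ (when $r=1$) or a $(x,y)$-primary monomial ideal; and by the chain of equivalences opening the proof of Lemma \ref{twodim} (which is just Corollary \ref{prinmo}), $L$ is $\omega$-linear iff $L_0$ is. Applying this to $I$ and $J$ and writing $I=x^{a_r}y^{b_1}I_0$, $J=x^{c_s}y^{d_1}J_0$, the ideal $IJ$ is a monomial times $I_0J_0$, so $IJ$ is $\omega$-linear iff $I_0J_0$ is. If one of $I_0,J_0$ is $R$ we are done, since the other is $\omega$-linear; otherwise $I_0J_0$ is again $(x,y)$-primary by Corollary \ref{monprimary}. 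Thus it remains to prove: \emph{if $I,J$ are $(x,y)$-primary $\omega$-linear monomial ideals, then $IJ$ is $\omega$-linear.} By symmetry in $x$ and $y$ we may also assume $a_1\ge b_r$, so that $\omega(I)=\max\{a_1,b_r\}=a_1$ by Lemma \ref{twodim}(3) (here $r,s\ge2$, since a $(x,y)$-primary monomial ideal has at least two minimal generators).

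Put $N:=\max\{a_1+c_1,\,b_r+d_s\}$. Since $x^{a_1+c_1}$ and $y^{b_r+d_s}$ lie in $IJ$ and the only product generator $x^{a_i+c_j}y^{b_i+d_j}$ dividing either of them is itself (a divisor of $x^{a_1+c_1}$ has zero $y$-exponent, forcing $i=j=1$, and similarly for $y^{b_r+d_s}$), both belong to $G(IJ)$; hence in the canonical form of $IJ$ the leading $x$-power is $a_1+c_1$ and the trailing $y$-power is $b_r+d_s$, so by Lemma \ref{twodim}(3) it suffices to show $\omega(IJ)=N$. The bound $\omega(IJ)\ge N$ is immediate from Lemma \ref{topi} applied to those two generators. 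Since $IJ$ is $(x,y)$-primary, $\omega(IJ)=e(IJ)$ by Lemma \ref{fund}, so for the reverse bound it is enough to show that every monomial $x^uy^v$ with $u+v=N$ lies in $IJ$, i.e.\ that $a_i+c_j\le u$ and $b_i+d_j\le v$ for some $i,j$ (recalling that $IJ$ is generated by the products $x^{a_i}y^{b_i}\cdot x^{c_j}y^{d_j}$). For this I would split on the size of $v$. If $v\ge b_r$, take $i=r$ and let $j$ be maximal with $d_j\le v-b_r$; then $b_r+d_j\le v$, and either $j=s$, so $a_r+c_j=0\le u$, or $j<s$, in which case the $\omega$-linearity inequality $c_j+d_{j+1}\le\omega(J)+1$ (from Theorem \ref{hoo} and Lemma \ref{twodim}(3)) together with $d_{j+1}>v-b_r$ gives $a_r+c_j=c_j\le\omega(J)+b_r-v\le N-v=u$, the last step because $\omega(J)+b_r=\max\{c_1+b_r,\,d_s+b_r\}\le\max\{a_1+c_1,\,b_r+d_s\}=N$ (using $b_r\le a_1$). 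If $v<b_r$, let $i^*$ be maximal with $b_{i^*}\le v$, so $i^*\le r-1$ and $b_{i^*+1}>v$; take $j=1$. Then $b_{i^*}+d_1=b_{i^*}\le v$, and $a_{i^*}+b_{i^*+1}\le a_1+1$ gives $a_{i^*}\le a_1-v$, whence $a_{i^*}+c_1\le(a_1+c_1)-v\le N-v=u$. In both cases $x^uy^v\in IJ$, so $\omega(IJ)=e(IJ)\le N$, and $IJ$ is $\omega$-linear.

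The step I expect to be the real obstacle is exactly this upper bound $\omega(IJ)\le N$ in the range $v<b_r$: the crude estimate $\omega(IJ)\le\omega(I)+\max\{c_1,d_s\}$ coming from Lemma \ref{2dimex} is too weak, and the obvious "greedy" choice of $j$ (as large as possible, matched to $v$) overshoots; the trick is to take $j=1$ and absorb the whole cost on the $I$-side, where $\omega$-linearity of $I$ forces $a_{i^*}$ to stay small.
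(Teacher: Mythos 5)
Your proof is correct, and while its skeleton matches the paper's, the key step is carried out by a genuinely different route. Both arguments reduce to $(x,y)$-primary ideals, identify the target value $N=\max\{a_{1}+c_{1},\,b_{r}+d_{s}\}$ via the pure-power generators $x^{a_{1}+c_{1}},y^{b_{r}+d_{s}}\in G(IJ)$, get $\omega(IJ)\ge N$ from Lemma \ref{topi}, and conclude $\omega$-linearity from the criterion $\omega(IJ)=N$. The difference is in the upper bound $\omega(IJ)\le N$: the paper splits into cases according to where $\omega(I)$ and $\omega(J)$ are attained, disposes of the ``aligned'' cases with Lemma \ref{idealop}, and in the mixed case ($\omega(I)=a_{1}>b_{r}$, $\omega(J)=d_{s}>c_{1}$) sandwiches $IJ$ above the auxiliary $(x,y)$-primary ideal $Ix^{c_{1}}+Jy^{b_{r}}$, whose $\omega$ it computes by Theorem \ref{hoo} and compares via Lemma \ref{orderreversing}. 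You instead use $\omega(IJ)=e(IJ)$ and verify $(\mathfrak{m}')^{N}\subseteq IJ$ (where $\mathfrak{m}'=(x,y)$) monomial by monomial, exhibiting for each $x^{u}y^{v}$ with $u+v=N$ an explicit product generator $x^{a_{i}+c_{j}}y^{b_{i}+d_{j}}$ dividing it; the inequalities you invoke, $a_{i}+b_{i+1}\le\omega(I)+1$ and $c_{j}+d_{j+1}\le\omega(J)+1$, come from Theorem \ref{hoo} alone (the $\omega$-linearity hypothesis enters only through $\omega(I)=\max\{a_{1},b_{r}\}$ and $\omega(J)=\max\{c_{1},d_{s}\}$, so your label ``$\omega$-linearity inequality'' is slightly misleading, though the mathematics is fine), and your case analysis on $v\gtrless b_{r}$ checks out, including the edge cases $j=s$ and $i^{*}\le r-1$. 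What each approach buys: the paper's auxiliary-ideal trick is shorter and hides the combinatorics inside Theorem \ref{hoo} and Lemma \ref{orderreversing}; yours is more elementary and constructive, avoids Lemmas \ref{idealop} and \ref{orderreversing} entirely, treats all sign-cases uniformly after a single $x\leftrightarrow y$ symmetry reduction, and in effect gives an explicit certificate for $e(IJ)\le N$, which makes the mechanism behind the sharp bound more transparent.
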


\begin{proof}
Let $I$ and $J$ be monomial $\omega$-linear ideals of $R$. By Lemma \ref{irreducible} we may assume that $I$ and $J$ are $(x,y)$-primary ideals of $R$. 
Then $\omega(I)= \max\{a_{1}, b_{r}\}$, $\omega(J)=\max\{c_{1}, d_{s}\}$ by Lemma \ref{twodim}. Now, $x^{a_{1}+c_{1}}$ and $y^{b_{r}+d_{s}}$ are elements of $G(IJ)$. Hence by Lemma \ref{limitratio}.2 and Lemma \ref{topi}, to show that $IJ$ is $\omega$-linear it suffices to show that $\omega(IJ)\le\max\{a_{1}+c_{1}, b_{r}+d_{s}\}$. Suppose that $\omega(I)=a_{1}$ and $\omega(J)=c_{1}$. Then 
all we have to show is $\omega(IJ)\le a_{1}+c_{1}$, which follows from Lemma \ref{idealop}. The case when $\omega(I)=b_{r}$ and $\omega(J)=d_{s}$ can be derived in the exact same manner. Therefore, without loss of generality we may assume that $\omega(I)=a_{1}>b_{r}$ and $\omega(J)=d_{s}>c_{1}$. Observe now that  $Ix^{c_{1}} + Jy^{b_{r}}$ 
is an $(x,y)$-primary ideal contained in $IJ$. Thus  by Lemma \ref{orderreversing} and Theorem \ref{hoo} we have
\begin{align*}
\omega(IJ)&\le \omega(Ix^{c_{1}}+Jy^{b_{r}})\\
&=\max\{\max_{1\le i\le r-1}\{a_{i}+b_{i+1}+c_{1}\}-1, \max_{1\le j\le s-1}\{c_{j}+d_{j+1}+b_{r}\}-1\}\\
&=\max\{ \omega(I)+c_{1}, \omega(J)+b_{r}\}\\
&=\max\{a_{1}+c_{1}, b_{r}+d_{s}\}.\end{align*}\end{proof}

Recall that given an ideal $I$ of a commutative ring $R$, an element $f \in R$ is said to be \textit{integral} over $I$ if there is some $k \in \mathbb{N}$ and $c_{i} \in I^{i}$ for each $i \in \{1, \ldots, k\}$ so that
\begin{align*} 
f^{k}+c_{1}f^{k-1}+\cdots + c_{k-1}f+c_{k}&=0.
\end{align*}
The set of elements of $R$ integral over $I$ is called the \textit{integral closure} of $I$ and denoted by $\overline{I}$. $I$ is said to be \textit{integrally closed} if $I=\overline{I}$. 

\begin{coro}
\label{int}
Every integrally closed monomial ideal of $R=k[x,y]$ is $\omega$-linear.
\end{coro}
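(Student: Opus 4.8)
The plan is to deduce this from Lemma~\ref{twodim}. Write $I=(x^{a_{1}}y^{b_{1}},\ldots,x^{a_{r}}y^{b_{r}})$ in the standard way, so $\{a_{i}\}$ is strictly decreasing, $\{b_{i}\}$ strictly increasing, and $x^{a_{1}}y^{b_{1}},\ldots,x^{a_{r}}y^{b_{r}}$ are precisely the minimal generators of $I$. By the equivalence of $(1)$ and $(3)$ in Lemma~\ref{twodim}, it suffices to prove $\omega(I)=\max\{a_{1}+b_{1},\,a_{r}+b_{r}\}$. One inequality is free: since $x^{a_{1}}y^{b_{1}},x^{a_{r}}y^{b_{r}}\in G(I)$, Lemma~\ref{topi} gives $\omega(I)\geq\max\{a_{1}+b_{1},a_{r}+b_{r}\}$. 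If $r=1$ then $I$ is principal and $\omega(I)=a_{1}+b_{1}$ by Corollary~\ref{prinmo}, so assume $r>1$; by Theorem~\ref{hoo}, $\omega(I)=\max_{1\leq i\leq r-1}\{a_{i}+b_{i+1}\}-1$, and the problem reduces to showing
\[
a_{i}+b_{i+1}\leq\max\{a_{1}+b_{1},\,a_{r}+b_{r}\}+1 \qquad\text{for every } i\in\{1,\ldots,r-1\}.
\]

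This is where the hypothesis that $I$ is integrally closed enters. Fix such an $i$ and put $u=x^{a_{i}-1}y^{b_{i+1}-1}$, an honest monomial since $a_{i}>a_{i+1}\geq0$ and $b_{i+1}>b_{i}\geq0$. A one-line divisibility check gives $u\notin I$: a generator $x^{a_{j}}y^{b_{j}}$ with $j\leq i$ has $a_{j}\geq a_{i}>a_{i}-1$, while one with $j\geq i+1$ has $b_{j}\geq b_{i+1}>b_{i+1}-1$, so none of them divides $u$. As $I=\overline{I}$, we get $u\notin\overline{I}$. I then invoke the Newton polyhedron description of the integral closure of a monomial ideal (see, e.g., \cite{Herzog}): $x^{\mathbf{c}}\in\overline{I}$ precisely when $\mathbf{c}\in\mathrm{conv}\{(a_{j},b_{j}):1\leq j\leq r\}+\mathbb{R}_{\geq0}^{2}$. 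Hence $(a_{i}-1,b_{i+1}-1)$ lies outside this polyhedron, and in particular outside the smaller set $\mathrm{conv}\{(a_{1},b_{1}),(a_{r},b_{r})\}+\mathbb{R}_{\geq0}^{2}$; equivalently, no point of the segment from $(a_{1},b_{1})$ to $(a_{r},b_{r})$ is coordinatewise $\leq(a_{i}-1,b_{i+1}-1)$.

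Converting this into the displayed inequality is then routine. Parametrizing the segment by $t\mapsto(a_{1}-tA,\,b_{1}+tB)$ for $t\in[0,1]$, with $A=a_{1}-a_{r}>0$ and $B=b_{r}-b_{1}>0$, the condition $(a_{1}-tA,\,b_{1}+tB)\leq(a_{i}-1,b_{i+1}-1)$ reads $t\geq(a_{1}-a_{i}+1)/A$ and $t\leq(b_{i+1}-1-b_{1})/B$; using $a_{r}<a_{i}\leq a_{1}$ and $b_{1}<b_{i+1}\leq b_{r}$, both bounds lie in $[0,1]$, so ``no admissible $t$'' amounts to the single inequality $B\,(a_{1}+1-a_{i})>A\,(b_{i+1}-1-b_{1})$. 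The whole problem being symmetric under the $k$-algebra automorphism of $R$ interchanging $x$ and $y$ (it carries $n$-absorbing ideals to $n$-absorbing ideals and integrally closed ideals to integrally closed ideals, and interchanges $a_{1}+b_{1}$ with $a_{r}+b_{r}$), I may assume $a_{1}+b_{1}\geq a_{r}+b_{r}$, i.e.\ $A\geq B$. Then $A\,(b_{i+1}-1-b_{1})\geq B\,(b_{i+1}-1-b_{1})$ since $b_{i+1}-1-b_{1}\geq0$, so $a_{1}+1-a_{i}>b_{i+1}-1-b_{1}$, whence (both sides being integers) $a_{i}+b_{i+1}\leq a_{1}+b_{1}+1$. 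This is the required inequality, so $\omega(I)=\max\{a_{1}+b_{1},a_{r}+b_{r}\}$ and $I$ is $\omega$-linear by Lemma~\ref{twodim}.

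The step I expect to be the main obstacle is the middle one: getting combinatorial control of $G(I)$ out of the bare assumption that $I$ is integrally closed. The device making it work is to probe $\overline{I}$ one ``corner'' at a time with the single monomial $u=x^{a_{i}-1}y^{b_{i+1}-1}$ and then read off from the Newton polyhedron that the lattice point $(a_{i}-1,b_{i+1}-1)$ dominates no point of the segment joining the extreme generators $x^{a_{1}}y^{b_{1}}$ and $x^{a_{r}}y^{b_{r}}$; after that everything is elementary integer arithmetic. Points deserving a little care: that the given presentation really is the minimal generating set, the degenerate case $r=1$, and the verification that the relevant range of $t$ meets $[0,1]$.
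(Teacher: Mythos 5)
Your proof is correct, but it follows a genuinely different route from the paper's. The paper first reduces to the $(x,y)$-primary case via Lemma~\ref{irreducible}, then invokes the factorization result of \cite[Proposition 2.6]{Quinonez} writing an integrally closed primary monomial ideal as a product $I_{1}I_{2}$ of two ideals of special staircase shape, checks that each factor satisfies criterion (3) of Lemma~\ref{twodim} using Theorem~\ref{hoo}, and concludes with Lemma~\ref{intomega} (products of $\omega$-linear ideals of $k[x,y]$ are $\omega$-linear). You instead verify criterion (3) of Lemma~\ref{twodim} directly for $I$ itself: probing with the corner monomial $u=x^{a_{i}-1}y^{b_{i+1}-1}\notin I=\overline{I}$ and using the Newton polyhedron description of $\overline{I}$, you show each ``bend'' value $a_{i}+b_{i+1}$ is at most $\max\{a_{1}+b_{1},a_{r}+b_{r}\}+1$, which combined with Theorem~\ref{hoo} and Lemma~\ref{topi} gives $\omega(I)=\max\{a_{1}+b_{1},a_{r}+b_{r}\}$. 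I checked the details: $u$ is a genuine monomial not in $I$, the bounds $t_{1}=(a_{1}-a_{i}+1)/A$ and $t_{2}=(b_{i+1}-1-b_{1})/B$ do lie in $[0,1]$ because the exponent sequences are strictly monotone integer sequences, the symmetry reduction $a_{1}+b_{1}\ge a_{r}+b_{r}$ is legitimate since the $x\leftrightarrow y$ automorphism preserves both hypotheses and conclusion, and the final integer-arithmetic step is sound. Note also that you only need the easy direction of the Newton polyhedron characterization (a rational convex combination of the two extreme exponent vectors dominated by the exponent of $u$ forces $u^{k}\in I^{k}$ for suitable $k$, hence $u\in\overline{I}$), which is essentially the criterion the paper itself quotes from \cite[Theorem 1.4.2]{Herzog} in Remark~\ref{rem3}; so your argument avoids both the external factorization theorem of Qui\~nonez and Lemma~\ref{intomega}, at the cost of a more hands-on convexity computation, while the paper's route yields Lemma~\ref{intomega} as a reusable tool and no coordinate geometry. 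Two small presentational points: make the WLOG swap explicit before fixing $i$ (as you implicitly do), and state the Newton polyhedron fact with a precise reference, since it is not formally recorded in the paper.
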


\begin{proof}
Let $I$ be an integrally closed monomial ideal of $R$. It is well known that $R$ is an \textit{integrally closed domain} (i.e., $R$ is an integral domain  that contains every nonzero element of the quotient field of $R$ that is integral over $R$), and that each principal ideal of $R$ is integrally closed, and the product of an integrally closed ideal of $R$ and a nonzero element of $R$ yields another integrally closed ideal of $R$. 
 Hence by Lemma \ref{irreducible} we may assume that $I$ is $(x,y)$-primary. Now by \cite[Proposition 2.6]{Quinonez} there are monomial ideals $I_{1}=(\{x^{r-i}y^{b_{i}}\}_{i=0}^{r})$ and $I_{2}=(\{x^{a_{i}}y^{i}\}_{i=0}^{r})$ of $R$ with $0=b_{0}<b_{1}<\cdot\cdot\cdot< b_{r}$ and $a_{0}>a_{1}>\cdot\cdot\cdot> a_{r}=0$ so $I=I_{1}I_{2}$. Thus
 by Lemma \ref{intomega}, it suffices to show that $I_{1}$ and $I_{2}$ are $\omega$-linear. By Theorem \ref{hoo}, $\omega(I_{1})=\max_{0\le i\le r-1}\{c_{i}\}$, where $c_{i}=r-i+b_{i+1}-1$ for each $i\in\{0,1,\cdot\cdot\cdot, r-1\}$. Since $c_{i+1}-c_{i}=b_{i+1}-(b_{i}+1)\ge0$ for each $i\in\{0,1,\cdot\cdot\cdot, r-1\}$, we have $\omega(I_{1})=c_{r-1}=b_{r}=\max\{r,b_{r}\}$ and $I_{1}$ is $\omega$-linear by Lemma \ref{twodim}. The proof that $I_{2}$ is $\omega$-linear follows similarly.
\end{proof}

\begin{rem}
\label{rem3}
\begin{enumerate}
\item
Even if $I$ and $J$ are $\omega$-linear monomial primary ideals such that $\sqrt{I}=\sqrt{J}$, we may have $\omega(I\cap J)<\omega(IJ)<\omega(I)+\omega(J)$. Indeed, set $R=k[x,y]$, $I=(x^{3}, xy, y^{2})$ and $J=(x^{2}, xy, y^{3})$. Then both $I$ and $J$ are $\omega$-linear $(x,y)$-primary ideals of $R$. However, $IJ=(x^{5}, x^{3}y, x^{2}y^{2}, xy^{3}, y^{5})$, so  $\omega(IJ)=5<6=\omega(I)+\omega(J)$. On the other hand, $\omega(I\cap J)=\max\{\omega(I),\omega(J)\}=3$ by Corollary \ref{primary}. 
\item
Not every $\omega$-linear monomial ideal of $R=k[x,y]$ is integrally closed. For example, set $I=(x^{3}, xy^{2}, y^{4})$. Then $\omega(I)=4$ by Theorem \ref{hoo}, and $I$ is $\omega$-linear by Lemma \ref{twodim}. However, $(x^{2}y)^{2}=x^{3}(xy^{2})\in I^{2}$ and $x^{2}y\not\in I$. Thus $I$ is not integrally closed (\cite[Theorem 1.4.2]{Herzog}). 
\end{enumerate}
\end{rem}

So far, we have considered only primary $\omega$-linear monomial ideals, and most of the proof is solely based on the fact that $e(I)=\omega(I)$ when $I$ is a primary ideal. We now show that there exists a class of (integrally closed) nonprimary $\omega$-linear monomial ideals. In fact, some of the squarefree monomial ideals are $\omega$-linear, as we will see in the next two lemmas.

Recall that a graph $G$ consists of a set of vertices $V = \{v_{1}, . . . , v_{n}\}$ and a set of edges $E \subseteq \{v_{i}v_{j} | v_{i}, v_{j} \in V \}$, and is called \textit{bipartite} if there exists two disjoint subsets $U_{1}, U_{2}$ of $V$ such that $E\subseteq\{v_{i}v_{j}\mid v_{i}\in U_{1}, v_{j}\in U_{2}\}$.
The \textit{edge ideal} of $G$ is defined to be the ideal $I = (\{x_{i}x_{j} | v_{i}v_{j} \in E\})$ of $R = k[x_{1}, . . . , x_{d}]$, where $k$ be a field and $d$ is the number of vertices of $G$. Given a graph $G=(V,E)$, a subset $W$ of $V$ is said to be a \textit{vertex cover} if given $v_{i}v_{j}\in E$, either $v_{i}\in W$ or $v_{j}\in W$. A vertex cover $W$ of $G$ is said to be a \textit{minimal vertex cover} if each proper subset of $W$ is not a vertex cover of $G$. 

If $I$ is an edge ideal of a graph, then it is a squarefree monomial ideal and 
a monomial prime ideal $P$ is a minimal ideal of $I$ if and only if the set of vertices that corresponds to $P$ is a minimal vertex cover. Also, a graph is bipartite if and only if it has no cycle of odd length as its subgraph. 

Our first example of a nonprimary $\omega$-linear ideal is the edge ideal of a  bipartite graph.
\begin{lemm}
\label{bipartite}
Let $R=k[x_{1},\cdot\cdot\cdot, x_{n}]$. If $I$ is an ideal of $R$ that is also the edge ideal of a bipartite graph $G$, then $I$ is $\omega$-linear.
\end{lemm}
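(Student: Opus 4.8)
The plan is to compute $\omega(I)$ and $\omega(I^{m})$ directly from primary decompositions and apply Lemma \ref{upper} twice, the only substantive external input being the classical fact that a bipartite edge ideal is normally torsion-free. We may assume $G$ has at least one edge, so $I\neq(0)$. Since $I$ is a squarefree monomial ideal it is radical, so $\Ass(R/I)=\Min(R/I)=\{P_{1},\ldots,P_{r}\}$, where each $P_{i}=(\{x_{j}\mid v_{j}\in W_{i}\})$ corresponds to a minimal vertex cover $W_{i}$ of $G$, and $I=P_{1}\cap\cdots\cap P_{r}$ is an irredundant primary decomposition. If $P_{i}\subseteq P_{j}$, then $W_{i}\subseteq W_{j}$, and since $W_{i}$ is already a vertex cover, minimality of $W_{j}$ forces $W_{i}=W_{j}$; hence the $P_{i}$ are pairwise incomparable, so $I$ has no embedded associated primes and Lemma \ref{upper} gives $\omega(I)=\sum_{i=1}^{r}e(P_{i})=r$, as $e(P_{i})=1$ for a prime ideal.

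Now fix $m\in\mathbb{N}$. Because $G$ is bipartite, $I$ is normally torsion-free, that is, $\Ass(R/I^{m})=\Min(R/I)$ for every $m$ (Simis--Vasconcelos--Villarreal; see also \cite[Chapter 10]{Herzog}); equivalently, $I^{m}=I^{(m)}=\bigcap_{i=1}^{r}P_{i}^{(m)}$. By Corollary \ref{monprimary} each $P_{i}^{m}$ is a $P_{i}$-primary monomial ideal, so $P_{i}^{(m)}=P_{i}^{m}$, and therefore $I^{m}=\bigcap_{i=1}^{r}P_{i}^{m}$. Since the $P_{i}$ are incomparable this decomposition is irredundant with no embedded associated primes, and $e(P_{i}^{m})=m$ because $\mu=m$ is the least $\mu$ with $P_{i}^{\mu}\subseteq P_{i}^{m}$ (here we use $P_{i}\neq(0)$). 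A second application of Lemma \ref{upper} now yields $\omega(I^{m})=\sum_{i=1}^{r}e(P_{i}^{m})=rm=m\,\omega(I)$, and since $m$ was arbitrary $I$ is $\omega$-linear.

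The one nontrivial ingredient is the identity $I^{m}=\bigcap_{i=1}^{r}P_{i}^{m}$, which is precisely the normal torsion-freeness of the bipartite edge ideal; everything else is bookkeeping around Lemma \ref{upper}. I would quote this fact rather than reprove it: a self-contained argument runs through the bipartite case of K\"onig's theorem together with the combinatorics of minimal vertex covers, a substantial detour from the theme of the paper. The main obstacle is therefore to isolate and invoke that result in the usable form $\Ass(R/I^{m})=\Min(R/I)$ (equivalently $I^{m}=I^{(m)}$) for all $m$.
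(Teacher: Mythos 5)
Your proposal is correct and follows essentially the same route as the paper: both invoke the Simis--Vasconcelos--Villarreal characterization of bipartite edge ideals via $I^{m}=\bigcap_{i=1}^{r}P_{i}^{m}$ and then apply Lemma \ref{upper} to this irredundant decomposition with incomparable (hence non-embedded) primes to get $\omega(I^{m})=\sum_{i=1}^{r}e(P_{i}^{m})=mr=m\,\omega(I)$. Your additional checks (incomparability of minimal vertex covers, $e(P_{i}^{m})=m$) are details the paper leaves implicit but are correct.
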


\begin{proof}
Let $I$ be an edge ideal of a graph $G$ and let $P_{1}, \cdot\cdot\cdot, P_{r}$ be the set of (incomparable) minimal prime ideals of $I$.
Recall that a graph $G$ is bipartite if and only if \[I^{m}=\bigcap_{P \text{ is a minimal prime of $I$}}P^{m}\] for each $m\in\mathbb{N}$ (\cite[Theorem 5.9]{Simis}). 
Hence if $G$ is bipartite, then by Lemma \ref{upper}, $\omega(I^{m})=\sum\limits_{i=1}^{r} e(P_{i}^{m})=\sum\limits_{i=1}^{r}m=mr$ for each $m \in\mathbb{N}$. Therefore the conclusion follows. 
\end{proof}

There are nonbipartite graphs whose edge ideals are $\omega$-linear.

\begin{lemm}
\label{hum}
Let $R=k[x_{1},\cdot\cdot\cdot, x_{n}]$. Let  $I=(x_{1}x_{2}, x_{2}x_{3},\cdot\cdot\cdot, x_{n-1}x_{n}, x_{n}x_{1})$ (that is, $I$ is  the edge ideal of a cycle graph of length $n$). Then $I$ is $\omega$-linear.
\end{lemm}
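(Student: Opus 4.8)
The plan is to split into the even and the odd case. If $n$ is even then $C_n$ is bipartite, so $I$ is $\omega$-linear by Lemma \ref{bipartite} and there is nothing more to prove. So the real content is the odd case, which I would reduce to a single Noether-exponent estimate.

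Assume $n$ is odd. The easy part first: the minimal primes of $I$ are the ideals $P_W=(x_i:i\in W)$ with $W$ a minimal vertex cover of $C_n$, they are pairwise incomparable, there are $r\ge 2$ of them, and since $I$ is radical Lemma \ref{upper} gives $\omega(I)=r$. So the goal becomes $\omega(I^m)=rm$ for all $m\ge 1$; as $m=1$ is trivial, fix $m\ge 2$. I would then invoke the fact that, since every proper induced subgraph of an odd cycle is a disjoint union of paths (hence bipartite), the only monomial prime that can be an embedded prime of a power of $I$ is $\mathfrak{m}$; that is, $\Ass(R/I^m)\subseteq\{P_1,\dots,P_r,\mathfrak{m}\}$, a known feature of powers of edge ideals of cycles. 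Together with the standard localization computation that the $P_i$-primary component of $I^m$ equals $P_i^{(m)}=P_i^m$, this shows the canonical primary decomposition of $I^m$ is $J$ or $J\cap Q$, where $J:=\bigcap_{i=1}^r P_i^m$ and $Q$ (present exactly when $\mathfrak{m}\in\Ass(R/I^m)$) is $\mathfrak{m}$-primary. Since the $P_i$ are incomparable and $e(P_i^m)=m$, Lemma \ref{upper} gives $\omega(J)=rm$; when $\mathfrak{m}$ is not associated we are done, and when it is, Lemma \ref{l} applies with $Q$ as the top component (legitimate because $\mathfrak{m}\supseteq P_i$ for all $i$) and gives $\omega(I^m)=\max\{e(Q),rm\}$.

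Thus the whole argument hinges on $e(Q)\le rm$; I expect in fact $e(Q)=2m$, which suffices since $r\ge 2$. To get this, note first that by Lemma \ref{small} and Lemma \ref{powermonomial}, $e(Q)=1+\max\{\deg w\}$, the maximum being over monomials $w=x_1^{d_1}\cdots x_n^{d_n}$ with $w\notin I^m$ but $x_jw\in I^m$ for every $j$ (these are exactly the monomials $x_1^{b_1-1}\cdots x_n^{b_n-1}$ coming from the $\mathfrak{m}$-primary irreducible components $(x_1^{b_1},\dots,x_n^{b_n})$ of $I^m$). Since $I^m$ is generated in degree $2m$, $x_jw\in I^m$ already forces $\deg w\ge 2m-1$. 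For the reverse bound I would use the combinatorial test for membership in $I^m$: $x_1^{c_1}\cdots x_n^{c_n}\in I^m$ iff $C_n$ admits a family of $m$ edges in which vertex $i$ is incident to at most $c_i$ of them, and the maximum size of such a family equals $\min\bigl\{\lfloor\tfrac12\sum_i c_i\rfloor,\ \min_W\sum_{i\in W}c_i\bigr\}$, where $W$ runs over minimal vertex covers of $C_n$ (the min-max equality for $C_n$, via LP duality and the description of the vertices of its fractional vertex-cover polytope). Then, if $\deg w=\sum_i d_i\ge 2m$, the failure $w\notin I^m$ must come from a minimal cover $W_0$ with $\sum_{i\in W_0}d_i\le m-1$; but $x_jw\in I^m$ then forces $j\in W_0$ and $\sum_{i\in W_0}d_i=m-1$ for every $j\in\{1,\dots,n\}$, whence $W_0=\{1,\dots,n\}$ -- contradicting that a minimal vertex cover of $C_n$ is a proper subset of its vertices. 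Hence $\deg w=2m-1$, so $e(Q)=2m\le rm$ and $\omega(I^m)=rm=m\,\omega(I)$.

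I expect the last paragraph -- identifying the ``corner'' monomials that govern the embedded component and pinning their degree -- to be the main obstacle; the rest is bookkeeping with the primary-decomposition results of the earlier sections. A secondary point is the citation for $\Ass(R/I^m)\subseteq\{P_1,\dots,P_r,\mathfrak{m}\}$; one could instead prove directly that the only irreducible components of $I^m$ using all $n$ variables are the $(x_1^{b_1},\dots,x_n^{b_n})$ above, which re-establishes $\mathfrak{m}$ as the only possible embedded prime.
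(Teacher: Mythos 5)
Your proposal follows the same skeleton as the paper's proof: the even case via Lemma \ref{bipartite}; in the odd case $\omega(I)=r$ by Lemma \ref{upper}; then the structure of $\Ass(R/I^m)$ (the paper cites Chen--Morey--Sung for the exact statement, your weaker containment $\Ass(R/I^m)\subseteq\{P_1,\dots,P_r,\mathfrak{m}\}$ suffices, though it too rests on quoted facts about powers of edge ideals) gives the canonical decomposition $\bigcap_i P_i^m$ or $(\bigcap_i P_i^m)\cap Q$ with $Q$ $\mathfrak{m}$-primary, and Lemma \ref{l} together with Lemma \ref{upper} reduces everything to the bound $e(Q)\le rm$. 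The divergence, and the one genuine gap, is in how that bound is obtained. Your reduction of $e(Q)$ to corner monomials is fine: by Lemma \ref{small} and Lemma \ref{powermonomial}, $e(Q)\le 1+\max\deg w$ over monomials $w\notin I^m$ with $x_jw\in I^m$ for all $j$ (the direction of the corner correspondence you need -- every irredundant $\mathfrak{m}$-primary irreducible component yields such a $w$ -- is an easy standard fact, but you should record the argument). The crux is the asserted min--max: that the maximum number of edges of $C_n$, with multiplicity, respecting vertex multiplicities $d_i$ equals $\min\{\lfloor\tfrac12\sum_i d_i\rfloor,\ \min_W\sum_{i\in W}d_i\}$. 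The direction you actually use is the hard, integral one: if all cover sums and the floor bound are at least $m$, an integral family of $m$ edges exists. LP duality plus the vertex description of the fractional cover polytope gives only the fractional equality and the easy upper bound on the integer maximum; to close the argument you must still produce an integral optimum, e.g.\ by half-integrality of extreme fractional matchings on a cycle followed by rounding the single odd half-integral cycle (losing at most $\tfrac12$), or by Edmonds' description of the $b$-matching polytope, where the whole odd cycle is the only blossom. This is true and provable, and with it your argument is correct and even sharper than the paper's, giving $e(Q)=2m\le rm$.

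For comparison, the paper avoids matching theory entirely at this step: since $I$ is generated by squarefree quadrics, every $\mathfrak{m}$-primary irreducible component of $I^m$ has all exponents at most $m$ (equivalently $I^m:x_i^{\infty}=I^m:x_i^{m}$, seen by discarding superfluous factors of $x_i$ from a product of $m$ edges), whence $e(Q)\le e\bigl((x_1^{a_1},\dots,x_n^{a_n})\bigr)\le mn-n+1\le mr$, the last inequality because an odd cycle has at least $n$ minimal vertex covers, so $r\ge n$. That bound is cruder than your $2m$ but requires no combinatorial optimization; if you do not want to prove the integer min--max for $C_n$, you can substitute this exponent bound and keep the rest of your argument unchanged.
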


\begin{proof}
Since a cycle of even length is bipartite, by Lemma \ref{bipartite} we may assume that  $n=2l+1$ for some $l\in\mathbb{N}$. Fix $m\in \mathbb{N}$. $I$ is a squarefree monomial ideal, so $I=P_{1}\cap\cdot\cdot\cdot \cap P_{r}$ where $P_{1},\cdot\cdot\cdot P_{r}$ are the minimal prime ideals of $I$ (\cite[Lemma 1.3.5]{Herzog}). Thus by Lemma \ref{upper} we have $\omega(I)=\sum_{i=1}^{r}e(P_{i})=r$, and we only need to show that $\omega(I^{m})=mr$. 
Note that since $I$ is an edge ideal of a cycle of length $2l+1$, Ass$(R/I^{m})=\{P_{1},\cdot\cdot\cdot, P_{r}\}$ if $m\le l$ and Ass$(R/I^{m})=\{P_{1},\cdot\cdot\cdot, P_{r}, \mathfrak{m}\}$ if $m>l$ (\cite[Lemma 3.1]{Chen}). Hence if $m\le l$, then $I^{m}=\cap_{i=1}^{r}P_{i}^{m}$ and $\omega(I^{m})=\sum\limits_{i=1}^{r}e(P_{i}^{m})=mr$ by Lemma \ref{upper}, so we are done. Assume that $m>l$. Then $I^{m}=(\cap_{i=1}^{r}P_{i}^{m})\cap Q$ is the canonical primary decomposition of $I^{m}$, where $Q$ is an $\mathfrak{m}$-primary monomial ideal of $R$ (\cite[Proposition 1.4.4]{Herzog}). 
Now, $Q=(x_{1}^{a_{1}},\cdot\cdot\cdot, x_{n}^{a_{n}}, f_{1},\cdot\cdot\cdot, f_{t})$ for some $a_{i} \in \mathbb{N}$ and monomials $f_{i}$. Since $I$ is a squarefree monomial ideal and $Q$ is a primary component of $I^{m}$, we must have $a_{i}\le m$ for each $i\in\{1,\cdot\cdot\cdot, n\}$ 
and then $e(Q)\le e((x_{1}^{a_{1}},\cdot\cdot\cdot, x_{n}^{a_{n}}))\le mn-n+1\le mr$ by 
Lemma \ref{powermonomial} and since $n \leq r$. It follows that $\omega(I^{m})=\max\{\sum\limits_{i=1}^{r}e(P_{i}^{m}), e(Q)\}=\max\{mr, e(Q)\}=mr$ by Lemma \ref{l} and Lemma \ref{upper}.
\end{proof}

We were unable to show whether every edge ideal is $\omega$-linear, but we do have the following.

\begin{lemm}
Let $I$ be a square-free monomial ideal. Then $\omega(I^{m})\ge m\omega(I)$ for each $m\in\mathbb{N}$.
\end{lemm}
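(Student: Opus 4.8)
The plan is to reduce the claim $\omega(I^m)\ge m\omega(I)$ for a squarefree monomial ideal $I$ to a statement about generators, exactly in the spirit of Lemma \ref{topi}. Recall that by Lemma \ref{upper} (applied to the standard decomposition of $I$ into monomial prime ideals $P_1,\ldots,P_r$, which is irredundant precisely because the $P_i$ are the incomparable minimal primes of a squarefree ideal), we have $\omega(I)=\sum_{i=1}^r e(P_i)=r$. So the goal is to show $\omega(I^m)\ge mr$ for each $m\in\mathbb{N}$. By Lemma \ref{topi} it suffices to exhibit a single monomial $f\in G(I^m)$ with $\deg(f)\ge mr$.

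First I would pick, for each minimal prime $P_i=(x_{j}\mid j\in W_i)$ where $W_i$ is the corresponding minimal vertex cover, a generator $u_i\in G(I)$ (an edge, or more generally a squarefree monomial generator) that witnesses the minimality of $W_i$ — more simply, I would just take $f:=\prod_{i=1}^{m}\big(\prod_{u\in G(I)}u\big)$ is too big; instead, the natural candidate is $f=g^m$ where $g$ is the product of the minimal generators of $I$, but that need not be minimal in $I^m$. The cleaner route: choose any edge $u=x_ax_b\in G(I)$; then $u^m\in I^m$ has degree $2m$, which is $\ge mr$ only when $r\le 2$. So a better choice is needed. The right idea is to build $f$ as a product of $m$ generators of $I$, one chosen so that removing any single variable from $f$ leaves a monomial still not in $I^m$. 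Concretely, enumerate the minimal vertex covers $W_1,\ldots,W_r$ and, using that no $W_i$ is contained in the union of the others (which would contradict minimality — actually minimality gives that each $W_i$ contains a vertex not in any $W_j$, $j\ne i$, namely a privately-covered edge), select for each $i$ a variable $x_{c_i}\in W_i$ and an edge $e_i=x_{c_i}x_{d_i}\in G(I)$ not covered by $W_i\setminus\{x_{c_i}\}$. Then set $f$ to be a suitable product of the $e_i$ repeated to total multiplicity $m$; one shows $f\in I^m$ but $f/x_{c_i}\notin I^m$ for each $i$, because dividing by $x_{c_i}$ destroys the only way the $e_i$-factors were being covered.

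The main obstacle, and the step I expect to require the most care, is verifying that the constructed $f$ is genuinely a minimal generator of $I^m$ of degree $\ge mr$ — i.e., that $f/x_{c_i}\notin I^m$. The difficulty is combinatorial: $f/x_{c_i}$ could still lie in $I^m$ via a completely different factorization into $m$ edges than the one we used to put $f$ in $I^m$. To rule this out one wants a clean invariant. A good tool is to test membership prime-by-prime using Lemma \ref{upper}-type reasoning on $I^m=\bigcap_i P_i^m\cap(\text{embedded part})$: a monomial $h$ lies in $P_i^m$ iff $\sum_{j\in W_i}(\text{exponent of }x_j\text{ in }h)\ge m$. So it suffices to choose $f$ with $\sum_{j\in W_i}\deg_{x_j}(f)=m$ exactly for every $i$ (so it sits on the boundary of every $P_i^m$) while $\deg(f)=\sum_i(\text{something})\ge mr$; then $f/x_{c_i}$ drops the $W_i$-weight below $m$, so $f/x_{c_i}\notin P_i^m\supseteq I^m$. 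Achieving $\deg(f)\ge mr$ simultaneously with all $W_i$-weights equal to $m$ is the crux; I would argue it by a counting/averaging argument: if $f=\prod e_k$ is a product of $m$ edges and $S=\{1,\ldots,r\}$ indexes the covers, then $\sum_{i\in S}(\text{weight of }f\text{ in }W_i)=\sum_k\sum_{i}(\text{weight of }e_k\text{ in }W_i)$, and since every edge is covered by every $W_i$ this inner sum is $\ge$ (number of $i$) $=r$ but we want the total degree, $2m$, compared against $mr$ — which again only works for $r\le 2$, signalling that for $r>2$ one must use non-edge generators of $I^m$ coming from the embedded primary component. This suggests the correct proof instead goes: take $f\in G(I)$ of maximal degree giving $\omega(I)\ge\deg f$ is too weak; the real argument is that $I$ squarefree forces $\sqrt{I^m}=\sqrt I$ with $e$ large, so combine $e(I^m)\le\omega(I^m)$ (Lemma \ref{fund}) with a lower bound $e(I^m)\ge m\,e(I)=m\,\omega(I)$ coming from the fact that $(\sqrt I)^{mr-1}\not\subseteq\bigcap_i P_i^m\supseteq I^m$ — indeed one exhibits a squarefree-support monomial of degree $mr-1$ avoiding some $P_i^m$, exactly as in the proof of Lemma \ref{powermonomial}. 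I would therefore structure the final proof around $\omega(I^m)\ge e(I^m)$ and a direct computation that $e\big(\bigcap_i P_i^m\big)=\max_i e(P_i^m)$ is \emph{not} the relevant bound; rather $e(I^m)\ge m r$ because $I^m\subseteq\bigcap P_i^m$ and one needs all $W_i$-weights $\ge m$, and the minimal total degree forcing that is $\sum_i m$ minus overlaps, which by minimality of the covers can be made to equal $mr$; pinning down that last count is where the work lies.
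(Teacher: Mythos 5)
There is a genuine gap: both lower-bound mechanisms you ultimately lean on --- the maximal degree of a monomial in $G(I^{m})$ (Lemma \ref{topi}) and the Noether exponent $e(I^{m})$ (Lemma \ref{fund}) --- are provably too weak to reach $m\omega(I)$, and you in fact notice the first failure yourself. For the edge ideal of a $5$-cycle one has $\omega(I)=r=5$, yet every element of $G(I^{m})$ has degree $2m<5m$, so Lemma \ref{topi} cannot give $mr$. Your final fallback, the claimed inequality $e(I^{m})\ge m\,e(I)=m\,\omega(I)$, fails on two counts: $e(I)=1$ for any radical ideal (so $m\,e(I)=m$, not $m\omega(I)$), and already for $I=(xy)\subseteq k[x,y]$ one has $r=2$ and $\omega(I)=2$ but $e(I^{m})=e\big((x^{m}y^{m})\big)=m<2m$; correspondingly $(\sqrt{I})^{2m-1}=(x^{2m-1}y^{2m-1})\subseteq (x^{m})\cap (y^{m})$, contradicting your assertion that $(\sqrt{I})^{mr-1}\not\subseteq\bigcap_{i}P_{i}^{m}$. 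No argument that tests only monomials or only powers of $\sqrt{I}$ can close the proof.

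The missing idea is to use non-monomial factors. Writing $I=\bigcap_{i=1}^{r}P_{i}$, set $f_{i}=\sum_{x_{j}\in G(P_{i})}x_{j}$, a linear form lying in $P_{i}\setminus P_{i}^{2}$ and, since the $P_{i}$ are incomparable, lying in no $P_{j}$ with $j\neq i$. Then $f=\prod_{i=1}^{r}f_{i}\in\prod_{i}P_{i}\subseteq I$, so $f^{m}\in I^{m}$ is a product of $mr$ ring elements; but deleting any single factor $f_{i}$ leaves $f_{i}^{m-1}\prod_{j\neq i}f_{j}^{m}$, whose order with respect to $P_{i}$ is $m-1$, hence it lies outside $P_{i}^{m}\supseteq I^{m}$ (this containment for minimal primes is exactly the prime-by-prime membership test you identified). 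Thus $I^{m}$ is not $(mr-1)$-absorbing, which is the paper's argument. Your instinct to make the witness ``sit on the boundary of every $P_{i}^{m}$'' was correct, but it can only be realized with sums of variables, not with monomials.
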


\begin{proof}
Let $P_{1},\cdot\cdot\cdot, P_{r}$ be minimal prime ideals of $I$. Then $I=\cap_{i=1}^{r}P_{i}$  and $\omega(I)=r$ by Lemma \ref{upper}. 
Set $f_{i}=\sum\limits_{x_{j}\in G(P_{i})}x_{j}$ for each $i\in\{1,\cdot\cdot\cdot, r\}$. Then $f:=\prod_{i=1}^{r}f_{i}\in \prod_{i=1}^{r}P_{i}\subseteq I$, so $f^{m}\in I^{m}$. However, $\cfrac{f^{m}}{f_{i}}\not\in P_{i}^{m}$, so $\cfrac{f^{m}}{f_{i}}\not\in I^{m}$(\cite[Proposition 1.4.4]{Herzog}). Thus $I^{m}$ is not $(mr-1)$-absorbing and $\omega(I^{m})\ge m\omega(I)$.
\end{proof}

We close the section with 
the following question: Is every integrally closed monomial ideal $\omega$-linear?
Integrally closed monomial ideals considered in this note (monomial ideals in $R=k[x,y]$, irreducible monomial ideals, or edge ideal of bipartite graphs) were all $\omega$-linear. Note also that if this question has an affirmative answer, then it follows that every edge ideal is $\omega$-linear.

\end{document}